\newcommand{\bydef}{:=}
\newcommand{\wh}[1]{\widehat{#1}}
\newcommand{\id}{\mathrm{id}}
\newcommand{\op}{\mathrm{op}}
\DeclareMathOperator{\Int}{\mathrm{Int}} 
\newcommand{\cA}{\mathcal{A}}
\newcommand{\cB}{\mathcal{B}}
\newcommand{\cC}{\mathcal{C}}
\newcommand{\cD}{\mathcal{D}}
\newcommand{\cH}{\mathcal{H}}
\newcommand{\cL}{\mathcal{L}}
\newcommand{\cO}{\mathcal{O}}
\newcommand{\NN}{\mathbb{N}}
\newcommand{\ZZ}{\mathbb{Z}}
\newcommand{\RR}{\mathbb{R}}
\newcommand{\CC}{\mathbb{C}}
\newcommand{\FF}{\mathbb{F}}
\newcommand{\KK}{\mathbb{K}}
\DeclareMathOperator{\Hom}{\mathrm{Hom}}
\DeclareMathOperator{\End}{\mathrm{End}}
\DeclareMathOperator{\Aut}{\mathrm{Aut}}
\DeclareMathOperator{\Maps}{\mathrm{Maps}}
\DeclareMathOperator{\Cent}{\mathrm{Cent}}
\DeclareMathOperator{\Ind}{\mathrm{Ind}}
\DeclareMathOperator{\uE}{\mathrm{E}}
\DeclareMathOperator{\res}{\mathrm{res}}
\DeclareMathOperator{\Mat}{\mathrm{Mat}}
\DeclareMathOperator{\alg}{\mathrm{alg}}
\newenvironment{romanenumerate}
 {\begin{enumerate}
 
 }{\end{enumerate}}
\newcommand{\cE}{\mathcal{E}}
\newcommand{\LL}{\mathbb{L}}
\newcommand{\Gal}{\mathrm{Gal}} 
\newcommand{\Br}{\mathrm{Br}} 
\DeclareMathOperator{\Ext}{\mathrm{Ext}} 
\newcommand{\Hc}{\textup{H}}  
\newcommand{\Zc}{\textup{Z}}
\newcommand{\Bc}{\textup{B}}
\newcommand{\dc}{\textup{d}}
\newtheorem{theorem}{Theorem}[section]
\newtheorem{proposition}[theorem]{Proposition}
\newtheorem{lemma}[theorem]{Lemma}
\newtheorem{corollary}[theorem]{Corollary}
\theoremstyle{definition}
\newtheorem{df}[theorem]{Definition}
\newtheorem{examples}[theorem]{Examples}
\theoremstyle{remark}
\newtheorem{remark}[theorem]{Remark}
\numberwithin{equation}{section}
\begin{document}

\title{Graded-division algebras and Galois extensions}

\author{Alberto Elduque${}^\star$}
\address{Departamento de Matem\'{a}ticas
 e Instituto Universitario de Matem\'aticas y Aplicaciones,
 Universidad de Zaragoza, 50009 Zaragoza, Spain}
\email{elduque@unizar.es}
\thanks{${}^\star$Supported by grants MTM2017-83506-C2-1-P (AEI/FEDER, UE) and E22\_17R 
(Gobierno de Arag\'on, Grupo de referencia ``\'Algebra y
Geometr{\'\i}a'', cofunded by Feder 2014-2020 ``Construyendo Europa desde Arag\'on'')}

\author{Mikhail Kochetov${}^{\star\star}$}
\address{Department of Mathematics and Statistics,
 Memorial University of Newfoundland,
 St. John's, NL, A1C5S7, Canada}
\email{mikhail@mun.ca}
\thanks{${}^{\star\star}$Supported by Discovery Grant 2018-04883 of the Natural Sciences and Engineering Research Council (NSERC) of Canada.}

\subjclass[2010]{Primary 16W50; Secondary 16K20, 16K50}

\keywords{Graded-division algebra; classification; Galois extension; Brauer group.}


\begin{abstract}
Graded-division algebras are building blocks in the theory of finite-dimensional
associative algebras graded by a group $G$. 
If $G$ is abelian, they can be described, using a loop construction, in terms of central
simple graded-division algebras.

On the other hand, given a finite abelian group $G$, any central simple $G$-graded-division algebra
over a field $\FF$ is determined, thanks to a result of Picco and Platzeck, by
its class in the (ordinary) Brauer group of $\FF$ and the isomorphism class of a $G$-Galois
extension of $\FF$. 

This connection is used to classify the simple $G$-Galois extensions of $\FF$ in terms of a
Galois field extension $\LL/\FF$ with Galois group isomorphic to a quotient $G/K$ and 
an element in the quotient $\Zc^2(K,\LL^\times)/\Bc^2(K,\FF^\times)$ subject to certain
conditions. Non-simple $G$-Galois extensions are induced from simple $T$-Galois extensions 
for a subgroup $T$ of $G$. We also classify finite-dimensional $G$-graded-division algebras 
and, as an application, finite $G$-graded-division rings.
\end{abstract}

\maketitle

\section{Introduction}\label{se:intro}

Division algebras and the Brauer group play a key role in the theory of finite-dimen\-sional associative algebras. 
Any finite-dimensional central simple associative algebra over a field $\FF$ is isomorphic to  
a matrix algebra $\Mat_n(\cD)$ over a central division algebra $\cD$ or, in other words, 
the algebra $\End_\cD(V)$ of endomorphisms of a finite rank right $\cD$-module $V$. 
The Brauer group $\Br(\FF)$ is the group of equivalence classes of finite-dimensional central simple algebras,
with two such algebras being equivalent if they are isomorphic to 
matrix algebras over the same division algebra. The equivalence class of $\cA$, which is an element of $\Br(\FF)$, 
will be denoted by $[\cA]$. The multiplication in $\Br(\FF)$ is induced by the tensor product of $\FF$-algebras: 
$[\cA][\cB]=[\cA\otimes_\FF\cB]$. We are interested in the graded version of this theory.

Given a group $G$, a \emph{$G$-graded algebra} over a field $\FF$ is 
an algebra $\cA$ endowed with a \emph{$G$-grading}, i.e., a vector space decomposition 
$\Gamma:\cA=\bigoplus_{g\in G}\cA_g$, such that $\cA_{g}\cA_h\subset \cA_{gh}$ 
for all $g,h\in G$. The subspaces $\cA_g$ are called the \emph{homogeneous
components} of the grading $\Gamma$ (or of the graded algebra $\cA$), 
and the elements of $\bigcup_{g\in G}\cA_g$ are said to be \emph{homogeneous}.
The nonzero elements of $\cA_g$ are said to have \emph{degree} $g$. 
The \emph{support} of $\Gamma$ (or of $\cA$) is the subset
$\{g\in G\mid \cA_g\neq 0\}$ of $G$. A \emph{homomorphism of $G$-graded algebras} $\cA\to\cB$ 
is an algebra homomorphism $\psi:\cA\rightarrow\cB$ such that $\psi(\cA_g)\subset\cB_g$
for all $g\in G$. In particular, $\cA$ and $\cB$ are said to be \emph{graded-isomorphic} if there is an
isomorphism of graded algebras $\cA\to\cB$. 

\subsection{Graded-division algebras and the graded Brauer group}\label{sse:intro_BrG}
A $G$-graded (associative) algebra is said to be a \emph{graded-division algebra} 
if all nonzero homogeneous elements are invertible. The support is then a subgroup of $G$.

Graded-division algebras are key objects in the graded theory, as any
finite-dimensional $G$-graded-central-simple associative algebra $\cA$, i.e., a $G$-graded associative algebra $\cA$ 
that is \emph{graded-central} ($Z(\cA)\cap\cA_e=\FF 1$, where $e$ is the identity of $G$) 
and \emph{graded-simple} ($\cA$ has no proper graded ideals), is graded-isomorphic to $\End_\cD(V)$ 
where $\cD$ is a $G$-graded-central-division algebra and $V$ is a graded right $\cD$-module of finite rank. 
The reader may consult the book \cite{NvO} or Chapter~2 in our monograph \cite{EKmon}, 
where this theory is used to study gradings on finite-dimensional simple Lie algebras.

However, the definition of a ``graded Brauer group'' is not evident. For abelian $G$
(for example, $G=\ZZ/2$ in the case of the Brauer-Wall group), one possibility is to fix a bicharacter 
$\phi:G\times G\rightarrow \FF^\times$ and define central simple algebras and (twisted) tensor
products relative to $\phi$ (see \cite{Wall,Knus}). 
Far-reaching generalizations are obtained using algebras with an action and coaction of a Hopf algebra 
(see e.g. \cite{CaeBr} and the references therein). But here we will focus on the special case of trivial $\phi$. 
In this setting, the \emph{graded Brauer group} $\Br_G(\FF)$ consists of the equivalence classes of 
finite-dimensional associative algebras that are central simple and $G$-graded, 
with two such algebras $\cA$ and $\cB$ being equivalent if there
is a central simple $G$-graded-division algebra $\cD$ and $G$-graded right 
$\cD$-modules $V$ and $W$ such that $\cA$ is graded-isomorphic to $\End_\cD(V)$ and
$\cB$ to $\End_\cD(W)$. The class of $\cA$ in $\Br_G(\FF)$ will be denoted by $[\cA]_G$. 
The multiplication in $\Br_G(\FF)$ is induced by the standard tensor product: $[\cA]_G[\cB]_G=[\cA\otimes_\FF\cB]_G$, 
where the homogeneous component $(\cA\otimes_\FF\cB)_g$ is defined to be the span of all elements  
$a\otimes b$ with $a\in\cA_{g_1}$, $b\in\cB_{g_2}$, and $g_1 g_2=g$.

Our motivation to consider this setting is explained in the next two subsections, but first we point out that the group 
$\Br_G(\FF)$ depends functorially on both $\FF$ and $G$. 
As with the classical Brauer groups, any embedding of fields $\FF\to\KK$ induces a homomorphism $\Br_G(\FF)\to\Br_G(\KK)$ 
by extension of scalars: the class of $\cA=\bigoplus_{g\in G}\cA_g$ is sent to the class of 
$\cA\otimes_\FF\KK=\bigoplus_{g\in G}\cA_g\otimes_\FF\KK$. 
Also, any group homomorphism $\alpha:G\to H$ induces a homomorphism $\Br_G(\FF)\to\Br_H(\FF)$ by ``push-forward'' 
of grading: the class of $\cA=\bigoplus_{g\in G}\cA_g$ is sent to the class of the same algebra, but equipped with 
the grading $\cA=\bigoplus_{h\in H}\cA_h$ where $\cA_h=\bigoplus_{g\in\alpha^{-1}(h)}\cA_g$.

\subsection{Loop algebra construction}\label{sse:intro_loop}
Given an epimorphism of abelian groups $\pi:G\to\overline{G}$, the ``push-forward'' mentioned above turns any 
$G$-graded algebra to a $\overline{G}$-graded algebra by coarsening the grading. The loop algebra construction is the 
right adjoint of this functor: it sends a $\overline{G}$-graded algebra 
$\cA=\bigoplus_{\bar{g}\in\overline{G}}\cA_{\bar{g}}$ to the $G$-graded algebra 
\[
L_\pi(\cA)\bydef\bigoplus_{g\in G}\cA_{\pi(g)}\otimes g\subset\cA\otimes_\FF \FF G,
\] 
where the multiplication is as in $\cA\otimes_\FF \FF G$ and the $G$-grading is given by the above direct sum.
This construction is well known in Lie theory, but it works for any variety of algebras: since the group algebra $\FF G$ is 
commutative, the loop construction preserves (homogeneous) polynomial identities.

It is shown in \cite{EldLoop}, based on previous results in \cite{BSZ,Allison_et_al,BK}, that, for abelian $G$, 
any $G$-graded-central-simple algebra (not necessarily associative or finite-dimensional) 
is graded-isomorphic to a cocycle-twisted loop algebra of a central simple $\overline{G}$-graded algebra, 
for a suitable quotient $\overline{G}$ of $G$. Thus, all graded-central-simple algebras can be obtained, at least in principle, from gradings on central simple algebras. It is important to point out 
that graded-simple algebras may be far from being simple, or even semisimple,
as ungraded algebras. 

\subsection{Gradings on Lie algebras and their representations}
Let $\cL$ be a finite-dimensional semisimple Lie algebra over a field $\FF$ of 
characteristic $0$ and let $V=V(\lambda)$ be a finite-dimensional irreducible representation
of $\cL_{\overline{\FF}}=\cL\otimes_\FF\overline{\FF}$, where $\overline{\FF}$ is an
algebraic closure of $\FF$ and $\lambda$ is the highest weight of $V$ relative to a Cartan
subalgebra of $\cL_{\overline{\FF}}$. A natural question is whether or not $V$ descends to a representation of $\cL$, 
i.e., whether or not there exists a representation of $\cL$ that becomes 
(isomorphic to) $V$ after extension of scalars to $\overline{\FF}$. 
A necessary condition is that $\lambda$ be invariant under the $*$-action
of the absolute Galois group $\Gal(\overline{\FF}/\FF)$, as in \cite[\S 27.A]{KMRT}.
If this is the case, one can define the \emph{Tits algebra} $A_\lambda$ over $\FF$, with $A_\lambda\otimes_\FF
\overline{\FF}\simeq\End_{\overline{\FF}}(V)$, and a surjective homomorphism from
the universal enveloping algebra $U(\cL)$ onto $A_\lambda$ that, after scalar extension, 
becomes the representation $U(\cL_{\overline{\FF}})\rightarrow\End_{\overline{\FF}}(V)$. 
Then $V$ descends to a representation of $\cL$ if and only if the class of $A_\lambda$
in the Brauer group $\Br(\FF)$ is trivial. 

If $\cL_{\overline{\FF}}$ is graded by an abelian group $G$, a natural question is whether or not $V=V(\lambda)$ 
admits a $G$-grading that makes it a graded $\cL_{\overline{\FF}}$-module. A necessary condition is that $\lambda$ 
be invariant under the action of the dual group $\wh{G}=\Hom(G,\overline{\FF}^\times)$. 
If this is the case, one can define a $G$-grading on $\End_{\overline{\FF}}(V)$ such that 
the representation $U(\cL_{\overline{\FF}})\rightarrow\End_{\overline{\FF}}(V)$ is a homomorphism of graded algebras
--- see \cite{EK_Israel} and \cite[Appendix]{EK15}. Then $V$ admits a $G$-grading if and only if the class of 
$\End_{\overline{\FF}}(V)$ in the graded Brauer group $\Br_G(\overline{\FF})$ is trivial.

Now suppose $\cL$ (and hence $\cL_{\overline{\FF}}$) is graded by $G$, $V=V(\lambda)$ admits a $G$-grading, 
and $\lambda$ is $\Gal(\overline{\FF}/\FF)$-invariant. Then we get a $G$-grading on $A_\lambda$ such that 
$U(\cL)\rightarrow A_\lambda$ is a homomorphism of graded algebras, and $V$ descends to a graded $\cL$-module
if and only if the class of $A_\lambda$ in the graded Brauer group $\Br_G(\FF)$ is trivial. 

\subsection{Classifications of graded-division algebras}
There are two natural ways to classify graded-division algebras: up to isomorphism of graded algebras or up to equivalence of graded algebras. If $\cD$ and $\cD'$ are graded-division algebras with supports $T$ and $T'$, then $\cD$ and $\cD'$ are \emph{equivalent} if there exists an isomorphism of algebras $\cD\to\cD'$ that maps $\cD_t$ to $\cD'_{\alpha(t)}$ where $\alpha:T\to T'$ is a group isomorphism. 

If $\cD$ is a graded-division algebra, then the identity component $\cD_e$ is a division algebra, the support $T$ is a subgroup of $G$, and $\cD$ is graded-isomorphic to the crossed product of $\cD_e$ and $T$, for a suitable action and $2$-cocycle in $\Zc^2(T,\cD_e^\times)$, where $\cD_e^\times$ is the group of invertible elements of $\cD_e$. In principle, group actions and cohomology can be used to classify the graded-division algebras with fixed $\cD_e$ and $T$. As shown in \cite{Kar}, they are classified, up to graded-isomorphism, by the following data: (i) a homomorphism $\sigma:T\to\mathrm{Out}(\cD_e)\subset\mathrm{Out}(\cD_e^\times)$ such that the corresponding obstruction in $\Hc^3(T,Z(\cD_e^\times))$ vanishes, and (ii) with $\sigma$ already fixed (up to conjugation in $\mathrm{Out}(\cD_e)$), an orbit in $\Hc^2(T,Z(\cD_e^\times))$ under a certain twisted action of $\Aut(\cD_e,\sigma)$. In practice, however, even if all these homomorphisms and orbits can be found, it is still difficult to construct the corresponding graded-division algebras explicitly and determine their properties.

The situation is more manageable if $\cD_e$ is the ground field $\FF$, which is the case if $\FF$ is algebraically closed and $\cD$ is finite-dimensional. Then $\cD$ is graded-isomorphic to the twisted group algebra $\FF^\tau T$ for some $2$-cocycle $\tau:T\times T\to \FF^\times$ (with $T$ acting trivially on $\FF^\times$), and $\FF^{\tau} T$ is graded-isomorphic to $\FF^{\tau'} T$ if and only if $[\tau]=[\tau']$ in $\Hc^2(T,\FF^\times)$. In particular, if $G$ is abelian and $\FF$ is algebraically closed then finite-dimensional graded-division algebras are classified by pairs $(T,\beta)$ where $T$ is a finite subgroup of $G$ and $\beta:T\times T\to\FF^\times$ is an alternating bicharacter. Moreover, the graded-division algebra can be constructed from this data explicitly; it is simple as an ungraded algebra if and only if $\beta$ is nondegenerate (see \cite{BSZ,BZ} and \cite[Chapter 2]{EKmon}). 

Over an arbitrary field $\FF$, the study of finite-dimensional graded-division algebras can be, in principle, reduced to the case of twisted group algebras by means of extension of scalars to the field $\LL=Z(\cD_e)$, which is a finite Galois  extension of $\FF$, and then using Galois descent (see \cite{BEK}). Over the field of real numbers, an explicit classification up to isomorphism of finite-dimensional graded-division algebras with an abelian grading group was given in \cite{BEK} (see also \cite{ARE} for the simple case). An explicit classification up to equivalence was given for these algebras in \cite{BZreal} (see also \cite{BZreal_simple,ARE} for the simple case).

\subsection{Exact sequence of Picco-Platzeck}\label{sse:intro_PP}
Given a finite abelian group $G$, Picco and Platzeck proved in \cite{PP} the
existence of the following split short exact sequence of abelian groups (see Theorem \ref{th:PP} in the next section):
\[
1\longrightarrow \Br(\FF)\stackrel{\iota}\longrightarrow \Br_G(\FF)
\stackrel{\zeta}\longrightarrow \uE_G(\FF)\longrightarrow 1
\]
where $\uE_G(\FF)$ is the group of isomorphism classes of \emph{$G$-Galois extensions} of $\FF$ 
(Definition \ref{df:Galois}). This allows us to classify 
finite-dimensional $G$-graded-division algebras in terms of (ungraded) division 
algebras and $G$-Galois extensions, but it can also be used to understand the 
structure of these Galois extensions. We will use both directions in this paper.

We note that the splitting of the above sequence is canonical: 
the embedding $\iota$ sends $[\cA]\in\Br(\FF)$ to $[\cA_0]_G\in\Br_G(\FF)$, where $\cA_0$ denotes the algebra $\cA$
equipped with the trivial grading (i.e., $\cA=\cA_e$), and the ``forgetful'' map $\Br_G(\FF)\to\Br(\FF)$,
sending the class $[\cA]_G$ of a central simple $G$-graded algebra $\cA$ 
to the class $[\cA]$ of $\cA$ as an ungraded algebra, is a left inverse of $\iota$.
(If we regard $\Br(\FF)$ as $\Br_1(\FF)$, these maps are induced by the group homomorphisms $1\to G$ and $G\to 1$.)
In particular, $\zeta$ gives a bijection between the isomorphism classes of \emph{division $G$-gradings} 
on matrix algebras over $\FF$ (i.e., the $G$-gradings that turn the said algebras into graded-division algebras)
and the isomorphism classes of $G$-Galois extensions of $\FF$.

\smallskip

The paper is organized as follows.
In Section \ref{se:GEGBG}, we review the definition of $G$-Galois extensions and the exact sequence of Picco-Platzeck. 
In Section \ref{se:div_to_Gal}, we will show that, for a finite abelian group $G$, 
if $\cD$ is a finite-dimensional central simple $G$-graded-division algebra with support $T$, then the
centralizer $\cC=\Cent_\cD(\cD_e)$ of the identity component $\cD_e$ is a 
simple $T$-Galois extension, and the opposite algebra of $\Ind_T^G(\cC)$
is a $G$-Galois extension representing the image of $[\cD]_G$ under $\zeta$ (Theorem~\ref{th:GTCD}).

The surjectivity of $\zeta$ in the sequence of Picco-Platzek shows 
that any $G$-Galois extension is, up to isomorphism, of the form 
$\Ind_T^G(\cC)$ above. This is used in Section~\ref{se:SimpleGal} to describe the structure of simple $G$-Galois
extensions (Theorem \ref{th:main}). They are determined by a Galois field extension
$\LL/\FF$ with Galois group isomorphic to a quotient $G/K$ and an element 
$\xi\in\Zc^2(K,\LL^\times)/\Bc^2(K,\FF^\times)$, where $K$ and $\xi$ satisfy certain conditions (Corollary \ref{co:main}).
In particular, we give an easy-to-use criterion (Corollary \ref{co:main2}) to check if 
a finite-dimensional $\FF$-algebra $\cA$, endowed with an action by automorphisms 
$\sigma:G\rightarrow\Aut_\FF(\cA)$, is a $G$-Galois extension of $\FF$.
We also give an explicit description of the simple Galois extensions in terms of generators and relations  
(Proposition \ref{prop:classification_mu}).

Section \ref{se:gr-div} will make use of the results in Sections \ref{se:div_to_Gal} and \ref{se:SimpleGal},
together with the main results in \cite{EldLoop}, to classify, for any abelian group $G$, all 
finite-dimensional $G$-graded-central-division algebras over a field $\FF$ up to graded-isomorphism 
(Theorem \ref{th:loop}) and Corollary \ref{co:loop}). Specializing to finite fields, we obtain an explicit
classification of finite $G$-graded-division rings (Theorem \ref{th:finite_GDR}). 

\emph{Throughout, all algebras will be assumed unital, associative and finite-dimen\-si\-onal, 
unless stated otherwise.} 
For any $n\in\NN$, we will denote by $G_{[n]}$ and $G^{[n]}$, respectively, the kernel and image 
of the endomorphism $[n]$ of $G$ that sends $g\mapsto g^n$.


\section{Galois extensions and the graded Brauer group}\label{se:GEGBG}

Given an action of a group $G$ on an algebra $\cC$ by automorphisms: $G\rightarrow \Aut_\FF(\cC)$, sending 
$g\in G$ to the automorphism $c\mapsto g\cdot c$ of $\cC$, we will say that $\cC$ is a \emph{$G$-algebra}.
The fixed subalgebra $\{c\in\cC\mid g\cdot c=c\ \forall g\in G\}$ will be denoted by $\cC^G$.
A \emph{homomorphism of $G$-algebras} is a $G$-equivariant homomorphism of algebras, i.e., 
an algebra homomorphism $\psi:\cC_1\rightarrow\cC_2$ such that $\psi(g\cdot c)=g\cdot\psi(c)$ for 
all $g\in G$ and $c\in \cC_1$. 

\begin{df}\label{df:Galois}
Let $G$ be a finite group. A \emph{$G$-Galois extension of $\FF$} is a finite-dimensional unital $G$-algebra $\cC$ 
over $\FF$ such that the action of $G$ on $\cC$ is faithful, $\cC^G=\FF 1$, and the following equivalent 
conditions hold:
\begin{enumerate}
\item[(a)] The homomorphism
\[
\begin{split}
\Phi:\cC\# \FF G&\longrightarrow \End_\FF(\cC)\\
c g\ &\mapsto \ \bigl(x\mapsto c(g\cdot x)\bigr)
\end{split}
\]
is an isomorphism.
\item[(b)] The linear map 
\[
\begin{split}
\cC\otimes_\FF\cC&\longrightarrow \Maps(G,\cC)\\
 a\otimes b\ &\mapsto\, \bigl(g\mapsto a(g\cdot b)\bigr)
\end{split}
\]
is bijective.
\end{enumerate}
\end{df}

The definition of commutative Galois extensions seems to have appeared for the first time in 
\cite{AuslanderGoldman} using condition (a), where $\cC\# \FF G$ denotes the smash product (recalled below), 
which in this case is the same as the skew group ring of $G$ with coefficients in $\cC$. 
Galois extensions that are not necessarily commutative were introduced in \cite[Definition 4.5]{ChaseRosenberg} 
using condition (b), and indicating that it is equivalent to condition (a). 

Note that condition (b) shows immediately that if $\cC$ is a $G$-Galois extension, then so is its 
opposite algebra $\cC^{\mathrm{op}}$, i.e., the algebra with the same underlying vector space as $\cC$, but with multiplication $x.y\bydef yx$.

Galois extensions of an algebraically closed field $\FF$ were classified in \cite{Dav} using a method developed 
(for a different purpose) in \cite{Mov}.

\smallskip

\emph{In what follows, the ground field $\FF$ will be arbitrary, but $G$ will be assumed abelian, 
unless indicated otherwise.} 

\smallskip

We denote by $[\cC]_{\textup{$G$-alg}}$ the isomorphism class of a $G$-algebra $\cC$. 
Let $\uE_G(\FF)$ be the following set:
\[
\uE_G(\FF)=\{[\cC]_{\textup{$G$-alg}}\mid\text{$\cC$ is a $G$-Galois extension of $\FF$}\}.
\]
Given two $G$-Galois extensions of $\FF$, $\cC_1$ and $\cC_2$, the tensor product $\cC_1\otimes_\FF\cC_2$ is 
naturally a $(G\times G)$-Galois extension. Let $H=\{(g,g^{-1})\mid g\in G\}$. Since $G$ is abelian, $H$ is a subgroup of 
$G\times G$, and the fixed subalgebra $(\cC_1\otimes_\FF\cC_2)^H$ is a $G$-Galois extension, using the isomorphism 
$G\simeq (G\times G)/H$, $g\mapsto (g,1)H=(1,g)H$. This defines an abelian group structure on $\uE_G(\FF)$.
The identity element is the class of $(\FF G)^*\simeq\Maps(G,\FF)$, where $G$ acts as follows: 
$(g\cdot f)(h)=f(hg)$ for $g,h\in G$ and $f\in\Maps(G,\FF)$.

\smallskip

Recall the graded Brauer group $\Br_G(\FF)$ and the embedding $\iota:\Br(\FF)\to\Br_G(\FF)$ from 
Subsections \ref{sse:intro_BrG} and \ref{sse:intro_PP}. 
We are now going to define a homomorphism $\zeta:\Br_G(\FF)\to\uE_G(\FF)$ 
to complete the short exact sequence of Picco-Platzeck.
Although it is not stated like this in \cite{PP}, $\zeta$ and its right inverse
$\vartheta:\uE_G(\FF)\rightarrow\Br_G(\FF)$ are given in terms of smash products, which
we briefly recall (see e.g. \cite[Chapter VII]{Sweedler} or \cite[Chapter 4]{Mont}). 

Consider a unital algebra $\cA$ and a Hopf algebra $\cH$ over $\FF$ (which are not necessarily finite-dimensional), 
and suppose $\cA$ is an \emph{$\cH$-module algebra} via a linear map $\cH\otimes_\FF\cA\rightarrow \cA$, $h\otimes a\mapsto h\cdot a$, which means that $h_1\cdot(h_2\cdot a)=(h_1h_2)\cdot a$ and $1\cdot a=a$ 
for all $h_1,h_2\in\cH$ and $a\in\cA$, and
\[
h\cdot (a_1a_2)=\sum\bigl(h_{(1)}\cdot a_1\bigr)\bigl(h_{(2)}\cdot a_2\bigr)
\text{ and } h\cdot 1=\varepsilon(h)1
\]
for all $h\in\cH$ and $a_1,a_1\in\cA$, where the comultiplication of $\cH$ is $\Delta(h)=
\sum h_{(1)}\otimes h_{(2)}$ (using Sweedler's notation) and the counit is 
$\varepsilon$. Under these conditions, the \emph{smash product} $\cA\# \cH$ is 
the algebra defined on the vector space $\cA\otimes_\FF\cH$ by setting 
\[
(a\otimes h)(b\otimes k)=\sum a\bigl(h_{(1)}\cdot b\bigr)\otimes h_{(2)}k
\]
for all $a,b\in\cA$ and $h,k\in\cH$. Both $\cA\simeq\cA\otimes 1$ and $\cH\simeq 
1\otimes\cH$ are subalgebras of $\cA\# \cH$ and, for simplicity, we will write 
$ah$ for the element $a\otimes h$ in $\cA\# \cH$.

For example, if an arbitrary group $G$ acts on $\cA$ by automorphisms, 
then $\cA$ is a module algebra over the group algebra $\FF G$. The smash product $\cA\#\FF G$ 
consists of the formal sums $\sum_{g\in G}a^g g$, with $a^g\in \cA$ for all $g\in G$ and only a finite number
of $a^g$ being nonzero, and the multiplication is determined by $ga=(g\cdot a)g$ for all
$a\in \cA$ and $g\in G$. Moreover, $\cA\#\FF G$ is naturally $G$-graded
with $\bigl(\cA\#\FF G)_g\bydef \cA g$ for any $g\in G$.

Dually, if $G$ is a finite group, consider the dual Hopf algebra $(\FF G)^*$ of the group algebra $\FF G$. 
Then $(\FF G)^*=\bigoplus_{g\in G}\FF\epsilon_g$, where $\epsilon_g: h\mapsto \delta_{g,h}$ (Kronecker's delta). 
The elements $\epsilon_g$ are orthogonal idempotents, and the comultiplication is given by $\Delta(\epsilon_g)
=\sum_{h\in G}\epsilon_{gh^{-1}}\otimes\epsilon_h$. If $\cA=\bigoplus_{g\in G}\cA_g$ is a unital $G$-graded algebra, 
then $\cA$ is an $(\FF G)^*$-module algebra with the following action: for any element $a=\sum_{g\in G}a_g$, 
with $a_g\in\cA_g$, we set
$
\epsilon_g\cdot a=a_g,
$
i.e., the action of $\epsilon_g$ is the projection onto the homogeneous component of degree $g$. 
The smash product $\cA\#(\FF G)^*$ consists of the formal sums $\sum_{g\in G}a^g\epsilon_g$, 
with $a^g\in \cA$ for all $g\in G$, and the multiplication is determined by
\[
(a\epsilon_g)(b\epsilon_h)=\bigl(ab_{gh^{-1}}\bigr)\epsilon_h
\]
for all $a,b\in\cA$ and $g,h\in G$, where $b=\sum_{k\in G}b_k$, $b_k\in\cA_k$. 
Moreover, $G$ acts on $\cA\#(\FF G)^*$ by automorphisms as follows:
\[
g\cdot (a\epsilon_h)\bydef a\epsilon_{hg^{-1}}
\]
for all $g,h\in G$ and $a\in\cA$.

\smallskip

The following result is valid for all finite abelian groups $G$ and unital commutative rings $\FF$, 
but we will restrict ourselves to the case of fields.

\begin{theorem}[{\cite[Section 1]{PP}}]\label{th:PP}
The mapping $\zeta\bigl([\cA]_G\bigr)=[\Cent_{\cA\# (\FF G)^*}(\cA)]_{\textup{$G$-alg}}$ is a well-defined group homomorphism, and 
\begin{equation}\label{eq:PP}
1\longrightarrow \Br(\FF)\stackrel{\iota}\longrightarrow \Br_G(\FF)\stackrel{\zeta}
\longrightarrow\uE_G(\FF)\longrightarrow 1
\end{equation}
is a split exact sequence. \qed
\end{theorem}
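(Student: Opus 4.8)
The plan is to treat the two smash-product operations $\cA\mapsto\cA\#(\FF G)^*$ and $\cC\mapsto\cC\#\FF G$ as mutually inverse constructions, via the Cohen--Montgomery duality isomorphisms
\[
(\cA\#(\FF G)^*)\#\FF G\cong\Mat_{|G|}(\cA),\qquad (\cC\#\FF G)\#(\FF G)^*\cong\Mat_{|G|}(\cC),
\]
the first an isomorphism of $G$-graded algebras and the second a $G$-equivariant isomorphism. The second tool is the double centralizer theorem: since every $\cA$ representing a class in $\Br_G(\FF)$ is central simple as an ungraded algebra, any unital overalgebra $\cB\supseteq\cA$ splits as $\cB\cong\cA\otimes_\FF\Cent_\cB(\cA)$. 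Applied to $\cB=\cA\#(\FF G)^*$ this gives
\[
\cA\#(\FF G)^*\cong\cA\otimes_\FF\cC_\cA,\qquad \cC_\cA\bydef\Cent_{\cA\#(\FF G)^*}(\cA),
\]
where the residual $G$-action $g\cdot(a\epsilon_h)=a\epsilon_{hg^{-1}}$ lives entirely on the second factor, because $\cA$ is precisely the $G$-fixed subalgebra of $\cA\#(\FF G)^*$.

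First I would establish that $\zeta$ is well defined. That $\cC_\cA$ is a $G$-Galois extension follows by combining the two tools: substituting $\cA\#(\FF G)^*\cong\cA\otimes_\FF\cC_\cA$ into the first duality isomorphism and cancelling the central simple factor $\cA$ yields $\cC_\cA\#\FF G\cong\Mat_{|G|}(\FF)\cong\End_\FF(\cC_\cA)$, which is exactly condition (a) of Definition~\ref{df:Galois}; moreover $\cC_\cA^G$ consists of the constant elements $\sum_h c\epsilon_h$ with $c\in Z(\cA)=\FF$, so $\cC_\cA^G=\FF 1$, and faithfulness of the action follows from (a). Independence of the representative of $[\cA]_G$ reduces, via $\cA=\End_\cD(V)$, to a Morita argument showing $\cC_{\End_\cD(V)}\cong\cC_\cD$ as $G$-algebras, i.e.\ that the centralizer of the coefficient algebra is unchanged under passing to endomorphism algebras of graded modules. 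Finally, the homomorphism property $\zeta([\cA]_G[\cB]_G)=\zeta([\cA]_G)\,\zeta([\cB]_G)$ is a direct computation with the Hopf algebra $(\FF G)^*$: the grading on $\cA\otimes_\FF\cB$ is the push-forward along the multiplication $G\times G\to G$, and tracing the idempotents $\epsilon_g$ through the comultiplication identifies $\cC_{\cA\otimes\cB}$ with the $H$-fixed subalgebra $(\cC_\cA\otimes_\FF\cC_\cB)^H$, $H=\{(g,g^{-1})\}$, which is by definition the product in $\uE_G(\FF)$.

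Next I would produce the section and the splitting. Define $\vartheta([\cC]_{\textup{$G$-alg}})=[\cC\#\FF G]_G$; condition (a) gives $\cC\#\FF G\cong\End_\FF(\cC)$, so this is a genuine class in $\Br_G(\FF)$, and the computation dual to the one above shows $\vartheta$ is a homomorphism. The identity $\zeta\circ\vartheta=\id$ then follows from the second duality isomorphism, which being $G$-equivariant identifies $\Cent_{(\cC\#\FF G)\#(\FF G)^*}(\cC\#\FF G)$ with $\cC$; this already yields surjectivity of $\zeta$. The sequence splits because the forgetful map $r\colon\Br_G(\FF)\to\Br(\FF)$, $[\cA]_G\mapsto[\cA]$, is a retraction of $\iota$, which simultaneously proves injectivity of $\iota$, i.e.\ exactness at $\Br(\FF)$. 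For exactness at $\Br_G(\FF)$, the inclusion $\im\iota\subseteq\ker\zeta$ is a direct computation: for a trivially graded algebra $\cA_0$ one has $\cA_0\#(\FF G)^*\cong\prod_{g\in G}\cA$, whose centralizer of the diagonal copy of $\cA$ is $\bigoplus_{g}\FF\epsilon_g\cong\Maps(G,\FF)$ with the translation action, i.e.\ the identity of $\uE_G(\FF)$; hence $\zeta\circ\iota=1$.

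The main obstacle is the reverse inclusion $\ker\zeta\subseteq\im\iota$. Using the relations already proved ($r\iota=\id$, $\zeta\iota=1$, $r\vartheta=1$, $\zeta\vartheta=\id$), this is equivalent to surjectivity of $(a,c)\mapsto\iota(a)\,\vartheta(c)$, i.e.\ to $\ker r\cap\ker\zeta=1$: one must show that a $G$-grading on a matrix algebra over $\FF$ (trivial ungraded class) whose associated Galois extension $\cC_\cA$ is split is itself graded-equivalent to the trivially graded $\FF$. Reducing by the structure theorem to a graded-division algebra $\cD$, the hypothesis $\zeta([\cD]_G)=1$ gives $\cD\#(\FF G)^*\cong\prod_{g\in G}\cD$ with $G$ permuting the factors simply transitively, and the task is to convert this splitting of the $G$-action into a graded $\cD$-module trivializing the grading, exhibiting $\cD$ as graded-equivalent to $\cC\#\FF G$ with $\cC$ split, hence $[\cD]_G\in\im\iota$. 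This structural step---equivalently, that $\vartheta$ maps $\uE_G(\FF)$ onto $\ker r$---is where the genuine content of Picco--Platzeck lies; everything else is formal once the two duality isomorphisms are in place.
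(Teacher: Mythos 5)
Your reduction of the theorem to the four identities $r\circ\iota=\id$, $\zeta\circ\iota=1$, $r\circ\vartheta=1$, $\zeta\circ\vartheta=\id$ (where $r:\Br_G(\FF)\to\Br(\FF)$ is the forgetful map) plus the single statement $\ker r\cap\ker\zeta=1$ is correct, and the four identities are established, modulo routine details, by your duality-plus-double-centralizer arguments. But that last statement --- which, as you rightly observe, is exactness at $\Br_G(\FF)$ and does \emph{not} follow formally from the other four --- is left unproved: calling it ``where the genuine content of Picco--Platzeck lies'' is a description, not an argument, so the proposal does not prove the theorem. This is the genuine gap, and it is an unnecessary one, because it can be closed with tools you already have. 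Represent $x\in\ker\zeta$ by a central simple $G$-graded-division algebra $\cD$ with support $T$. The hypothesis $\zeta(x)=1$ says $\Gamma(\cD)\cong\Maps(G,\FF)$ as $G$-algebras; in particular $\Gamma(\cD)$ has exactly $|G|$ minimal two-sided ideals. On the other hand, the coset-by-coset computation of $\Cent_{\cD\#(\FF G)^*}(\cD)$ (this is the proof of the paper's Theorem \ref{th:GTCD}, which relies only on Lemma \ref{lm:crossed} and Proposition \ref{prop:GCD}, not on Theorem \ref{th:PP}) exhibits $\Gamma(\cD)$ as a direct product of $[G:T]$ ideals, each antiisomorphic to $\cC=\Cent_\cD(\cD_e)$, and $\cC$ is \emph{simple} by the double centralizer theorem, since $\cD$ is central simple and $\cD_e$ is a division algebra. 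Hence $\Gamma(\cD)$ has exactly $[G:T]$ minimal ideals, so $[G:T]=|G|$, i.e.\ $T=\{e\}$. Thus $\cD=\cD_e$ carries the trivial grading and $x=[\cD]_G=\iota\bigl([\cD]\bigr)\in\im\iota$; no conversion of the split $G$-action into a ``graded $\cD$-module trivializing the grading'' is needed.

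Two further points. First, your verification of $\zeta\circ\vartheta=\id$ hides the one step where the literature actually went wrong: under the duality isomorphism $(\cC\#\FF G)\#(\FF G)^*\cong\Mat_{|G|}(\cC)$, the subalgebra $\cC\#\FF G$ becomes the algebra of left multiplications on $\cC\#\FF G$, its centralizer is $\{R_b\mid b\in\Cent_{\cC\#\FF G}(\cC)\}$, and one lands on $\cC$ only after composing \emph{two} antiisomorphisms, namely $b\mapsto R_b$ and $\Cent_{\cC\#\FF G}(\cC)\cong\cC^{\op}$, the latter coming from Definition \ref{df:Galois}(a). This is precisely the paper's Lemma \ref{lm:PP5}, included there because \cite[Lemma 5]{PP} asserts the answer is $\cC^{\op}$; your one-line appeal to ``$G$-equivariance'' states the correct conclusion but is not checkable as written, and this computation must appear. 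Second, for calibration: the paper itself does not reprove Theorem \ref{th:PP} --- it quotes \cite{PP} and supplies only Lemmas \ref{lm:crossed} and \ref{lm:PP5} --- so your remaining asserted-but-unproved steps (Morita invariance of $\cA\mapsto\Cent_{\cA\#(\FF G)^*}(\cA)$, giving well-definedness of $\zeta$, and the multiplicativity of $\zeta$) are exactly the parts of \cite{PP} that a self-contained proof would also have to write out; they are routine compared with the gap above, but they are not free.
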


\begin{remark}
The $G$-algebra denoted by $AE$ in \cite{PP} is isomorphic to the smash product
$A\# (\FF G)^*$ by means of $ae_g\mapsto a\epsilon_{g^{-1}}$ for $a\in A$ and $g\in G$.
\end{remark}

For a $G$-graded algebra $\cA$, we will denote $\Gamma(\cA)\bydef \Cent_{\cA\# (\FF G)^*}(\cA)$, 
so that, if $\cA$ is central simple, $\zeta\bigl([\cA]_G\bigr)=[\Gamma(\cA)]_{\textup{$G$-alg}}$
in \eqref{eq:PP}. There is an alternative construction of $\Gamma(\cA)$, which is valid for 
any \emph{strongly graded} $\cA$, but we will restrict ourselves to the following special case. 
Assume that, for every $g\in G$, the homogeneous component $\cA_g$ contains an invertible element, say, 
$u_g$. (In other words, $\cA$ is isomorphic to a crossed product of $\cA_e$ and $G$, as $\cA_g=\cA_e u_g$ for all $g\in G$.)
Consider the graded subalgebra $\cC=\Cent_\cA(\cA_e)$. 
For any $g\in G$, the inner automorphism $\Int u_g: x\mapsto u_g x u_g^{-1}$ preserves $\cA_e$, 
and hence also $\cC$. Moreover, its restriction to $\cC$ does not depend on the choice 
of the invertible element $u_g\in\cA_g$, because any other such element has the form $au_g$, with 
invertible $a\in\cA_e$, and $(\Int a)\vert_\cC=\id_\cC$. Therefore, there is a well-defined group homomorphism
\begin{equation}\label{eq:sigma}
\begin{split}
\sigma: G&\longrightarrow \Aut_\FF(\cC)\\
g\,&\mapsto (\Int u_g)\vert_\cC\quad\text{for any invertible homogeneous $u_g$ of degree $g$.}
\end{split}
\end{equation}
As usual, we will write $g\cdot c$ for the image of $c\in\cC$ under $\sigma_g$.

\begin{lemma}\label{lm:crossed}
Let $G$ be a finite abelian group and $\cA$ be a $G$-graded algebra such that, for every $g\in G$, 
the homogeneous component $\cA_g$ contains an invertible element. 
Then $\Cent_{\cA\#(\FF G)^*}(\cA)$ is antiisomorphic to $\Cent_\cA(\cA_e)$ as a $G$-algebra.
\end{lemma}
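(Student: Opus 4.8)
The plan is to compute the centralizer $\Cent_{\cA\#(\FF G)^*}(\cA)$ explicitly, parametrize its elements by $\cC\bydef\Cent_\cA(\cA_e)$, and then read off from the resulting bijection that it reverses multiplication and intertwines the two $G$-actions. Throughout I realize the embedded copy of $\cA$ in the smash product as $b\mapsto\sum_{h\in G}b\epsilon_h$ (the image of $b\otimes 1$, since $1_{(\FF G)^*}=\sum_h\epsilon_h$), and I fix an invertible $u_g\in\cA_g$ for each $g$, so that $g\cdot c=\Int(u_g)(c)=u_gcu_g^{-1}$ for $c\in\cC$.

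First I would determine which $z=\sum_{g}a^g\epsilon_g$ commute with every $\sum_h b\epsilon_h$. Using $(a\epsilon_g)(b\epsilon_h)=(ab_{gh^{-1}})\epsilon_h$ and comparing the coefficient of each $\epsilon_h$, the condition $z(b\otimes 1)=(b\otimes 1)z$ becomes $\sum_g a^g b_{gh^{-1}}=ba^h$ for all $b\in\cA$ and $h\in G$; specializing to homogeneous $b=b_m\in\cA_m$ turns this into the family of identities $a^{mh}b_m=b_m a^h$. Taking $m=e$ shows each $a^h$ lies in $\cC$, and taking $b_m=u_m$ then gives $a^{mh}=u_m a^h u_m^{-1}=m\cdot a^h$; in particular $a^m=m\cdot a^e$ with $a^e\in\cC$. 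Conversely, writing an arbitrary $b_m\in\cA_m$ as $bu_m$ with $b\in\cA_e$ and using that $(mh)\cdot c\in\cC$ commutes with $b\in\cA_e$, one checks that $z_c\bydef\sum_{m}(m\cdot c)\epsilon_m$ lies in the centralizer for every $c\in\cC$. Hence $c\mapsto z_c$ is a linear bijection from $\cC$ onto $\Cent_{\cA\#(\FF G)^*}(\cA)$.

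Next I would verify the two compatibilities. For the $G$-action, the reindexing $g\cdot z_c=\sum_m(m\cdot c)\epsilon_{mg^{-1}}=\sum_n\bigl(n\cdot(g\cdot c)\bigr)\epsilon_n=z_{g\cdot c}$ shows the bijection is $G$-equivariant. For the multiplicative behaviour, I compute $z_cz_{c'}$ by the same rule: collecting the coefficient of $\epsilon_k$, using that $\sigma_k$ preserves degrees so that $(k\cdot c')_{gk^{-1}}=k\cdot(c'_{gk^{-1}})$, and substituting $s=gk^{-1}$, I get $z_cz_{c'}=z_{c''}$ with $c''=\sum_s (s\cdot c)\,c'_s$, where $c'=\sum_s c'_s$ is the homogeneous decomposition. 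The crux is then the identity $(s\cdot c)\,c'_s=c'_s\,c$ for $c\in\cC$ and $c'_s\in\cC_s$: writing $c'_s=bu_s$ with $b\in\cA_e$ gives $(s\cdot c)c'_s=u_sc u_s^{-1}\,bu_s=u_s(u_s^{-1}bu_s)c=bu_sc=c'_s c$, since $c\in\cC$ commutes with $u_s^{-1}bu_s\in\cA_e$. Summing over $s$ yields $c''=c'c$, so $z_cz_{c'}=z_{c'c}$, i.e. $c\mapsto z_c$ is an antihomomorphism. Together with the previous two properties this exhibits $\cC$ and $\Cent_{\cA\#(\FF G)^*}(\cA)$ as antiisomorphic $G$-algebras.

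The step I expect to be the main obstacle is the identity $(s\cdot c)c'_s=c'_s c$, which is exactly the mechanism producing the \emph{opposite} algebra: the twist by $\sigma_s$ converts left multiplication in $\cA$ into the reversed product, and this uses both $c\in\Cent_\cA(\cA_e)$ and the choice-independence of $\sigma_s$ established in \eqref{eq:sigma}. Conceptually, the appearance of $\cC^{\op}$ is transparent from the double-centralizer picture: the hypothesis makes $\cA$ free as a right $\cA_e$-module and $\cA\#(\FF G)^*\cong\End_{\cA_e}(\cA)$ via $a\epsilon_h\mapsto\bigl(x\mapsto a\,x_h\bigr)$, under which the embedded $\cA$ acts by left multiplications and its centralizer consists of the $(\cA,\cA_e)$-bimodule endomorphisms, namely the right multiplications $R_c$ with $c\in\cC$, satisfying $R_cR_{c'}=R_{c'c}$. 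I would use the explicit computation above for the $G$-equivariance and either viewpoint for the antiisomorphism.
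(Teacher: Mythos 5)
Your proof is correct and follows essentially the same route as the paper's: both compute $\Cent_{\cA\#(\FF G)^*}(\cA)$ explicitly from the relation $a^{mh}b_m=b_ma^h$ (the paper's equation \eqref{eq:ahg}), parametrize it as $\{\sum_g(g\cdot c)\epsilon_g\mid c\in\cC\}$, and check that $c\mapsto\sum_g(g\cdot c)\epsilon_g$ is a $G$-equivariant antiisomorphism, your key identity $(s\cdot c)\,c'_s=c'_s\,c$ being exactly \eqref{eq:ahg} specialized to $a^g=g\cdot c$. Your closing remark identifying $\cA\#(\FF G)^*$ with $\End_{\cA_e}(\cA)$ and the centralizer with right multiplications $R_\cC$ is a nice conceptual alternative, and it mirrors the argument the paper itself uses later in Lemma \ref{lm:PP5}.
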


\begin{proof}
First we compute $\Cent_{\cA\# (\FF G)^*}(\cA)$. Let $x=\sum_{g\in G}a^g \epsilon_g$, 
where $a^g\in \cA$ for all $g\in G$. For any $b_h\in\cA_h$, we have
\begin{align*}
xb_h&=\Bigl(\sum_{g\in G}a^g \epsilon_g\Bigr)b_h=\sum_{g\in G}(a^gb_h) \epsilon_{h^{-1}g},\\
b_hx&=\sum_{g\in G}b_ha^g \epsilon_g,
\end{align*}
so, equating the coefficients of $\epsilon_g$, we obtain: $x\in\Cent_{\cA\#(\FF G)^*}(\cA)$ if and only if
\begin{equation}\label{eq:ahg}
a^{hg}b_h=b_ha^g\quad\forall g,h\in G,\, b_h\in\cA_h.
\end{equation}
With $h=e$, this equation gives $a^g\in \cC\bydef\Cent_\cA(\cA_e)$. 
With invertible $b_h$ and $g=e$, it gives $a^h=b_h a^e b_h^{-1}=h\cdot a^e$, for any $h\in G$.
Conversely, if $a^g=g\cdot c$ where $c\in\cC$, then \eqref{eq:ahg} holds. 
Indeed, we have $b_h=a u_h$ for some $a\in\cA_e$, so 
$b_h a^g=(h\cdot a^g) b_h=(h\cdot(g\cdot c)) b_h=((hg)\cdot c) b_h=a^{hg} b_h$.
Therefore, we obtain:
\[
\Cent_{\cA\# (\FF G)^*}(\cA)
=\Bigl\{\sum_{g\in G}(g\cdot c)\epsilon_g\mid c\in\cC\Bigr\}.
\]

Define a linear isomorphism
\[
\begin{split}
\psi:\cC&\longrightarrow \Cent_{\cA\# (\FF G)^*}(\cA)\\
c\ &\mapsto\ \sum_{g\in G}(g\cdot c) \epsilon_g.
\end{split}
\]
We claim that $\psi$ is an antiisomorphism of algebras, i.e., $\psi(c)\psi(d)=\psi(dc)$ for all $c,d\in \cC$.
Since $\cC$ is a graded subalgebra of $\cA$, we may assume that $d\in\cA_k$ for some $k\in G$. Then
\[
\begin{split}
\psi(c)\psi(d)&=\Bigl(\sum_{g\in G}(g\cdot c) \epsilon_g\Bigr)\Bigl(\sum_{h\in G}(h\cdot d) \epsilon_h\Bigr)\\
&=\sum_{g,h\in G}(g\cdot c)\delta_{gh^{-1},k}(h\cdot d) \epsilon_h\\
&=\sum_{h\in G}((kh)\cdot c)(h\cdot d)\epsilon_h=\sum_{h\in G}(h\cdot d)(h\cdot c)\epsilon_h\quad\text{because of \eqref{eq:ahg}}\\
&=\sum_{h\in G}(h\cdot (dc))\epsilon_h=\psi(dc),
\end{split}
\]
as claimed.

Finally, the action of $G$ on $\cA\#(\FF G)^*$ is given by
$
h\cdot (a \epsilon_g)=a \epsilon_{gh^{-1}}.
$
For $c\in\cC$ and $h\in G$, we compute:
\[
\psi(h\cdot c)=\sum_{g\in G}((gh)\cdot c) \epsilon_g
=\sum_{g\in G}(g\cdot c) \epsilon_{gh^{-1}}=h\cdot \psi(c),
\]
so $\psi$ is $G$-equivariant.
\end{proof}

\begin{remark}
This result can be considered a special case of \cite[Proposition 3.4]{CGO}, but we included a proof for completeness
and also because showing that the $G$-algebra denoted by $(GA)^A$ in \cite{CGO} is, in the setting of \cite{PP},
antiisomorphic to $\Cent_{A\#(\FF G)^*}(A)$, requires computations similar to the above.
\end{remark}

The fact that $\zeta$ in \eqref{eq:PP} is surjective follows from \cite[Lemma 5]{PP}, but there is a 
minor mistake in its proof. The next result gives the correct statement.   

\begin{lemma}\label{lm:PP5}
Let $G$ be a finite abelian group and $\cC$ be a $G$-Galois extension of $\FF$. Then 
$\Cent_{(\cC\# \FF G)\#(\FF G)^*}(\cC\# \FF G)$ is isomorphic to $\cC$ as a $G$-algebra.
\end{lemma}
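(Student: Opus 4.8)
The plan is to apply Lemma~\ref{lm:crossed} to the $G$-graded algebra $\cA\bydef \cC\#\FF G$, whose homogeneous components are $\cA_g=\cC g$. Each such component contains the invertible element $g\in\FF G\subset\cA$, so the hypothesis of Lemma~\ref{lm:crossed} is satisfied and $\Cent_{\cA\#(\FF G)^*}(\cA)$ is antiisomorphic, as a $G$-algebra, to $\cC'\bydef\Cent_\cA(\cA_e)=\Cent_{\cC\#\FF G}(\cC)$, where the $G$-action on $\cC'$ is conjugation by homogeneous invertibles as in \eqref{eq:sigma}; here I may take $u_g=g$, so that $\sigma_g(y)=gyg^{-1}$. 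It therefore suffices to prove that $\cC'\cong\cC^{\op}$ as $G$-algebras: then $\Cent_{\cA\#(\FF G)^*}(\cA)\cong(\cC')^{\op}\cong(\cC^{\op})^{\op}=\cC$, the two opposites cancelling, and the $G$-action being unaffected by passing to opposite algebras since each $g\cdot(\,\cdot\,)$ is also an automorphism of the opposite algebra.

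To identify $\cC'$, I would invoke condition~(a) of Definition~\ref{df:Galois}: the isomorphism $\Phi\colon\cA=\cC\#\FF G\to\End_\FF(\cC)$, $cg\mapsto(x\mapsto c(g\cdot x))$. Under $\Phi$ the identity component $\cA_e=\cC$ is carried to the algebra of left multiplications $L_c\colon x\mapsto cx$, and since $\Phi$ is an algebra isomorphism it maps the centralizer $\cC'=\Cent_\cA(\cC)$ onto the centralizer of $\{L_c\mid c\in\cC\}$ in $\End_\FF(\cC)$. The latter is exactly the algebra of left-$\cC$-linear endomorphisms of $\cC$, which consists of the right multiplications $R_d\colon x\mapsto xd$ (any such $f$ equals $R_{f(1)}$), and $d\mapsto R_d$ is an isomorphism $\cC^{\op}\to\Cent_{\End_\FF(\cC)}(L_\cC)$ because $R_dR_{d'}=R_{d'd}$. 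Thus $\Theta\colon d\mapsto\Phi^{-1}(R_d)$ is an algebra isomorphism $\cC^{\op}\to\cC'$.

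It remains to check that $\Theta$ is $G$-equivariant, which I expect to be the only genuine computation. The group element $g$ satisfies $\Phi(g)=\tau_g$, where $\tau_g\colon x\mapsto g\cdot x$, so under $\Phi$ the automorphism $\sigma_g=\Int g$ of $\cC'$ becomes conjugation by $\tau_g$ on $\End_\FF(\cC)$. A direct calculation gives $\tau_gR_d\tau_g^{-1}=R_{g\cdot d}$, using that $g\cdot(\,\cdot\,)$ is an algebra automorphism of $\cC$; hence $\sigma_g(\Theta(d))=\Theta(g\cdot d)$, so $\Theta$ intertwines the original $G$-action on $\cC$ (which is simultaneously the $G$-action on $\cC^{\op}$) with $\sigma$. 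This proves $\cC'\cong\cC^{\op}$ as $G$-algebras and, combined with the first paragraph, yields the claim. The main obstacle is simply the bookkeeping of the two antiisomorphisms (the one from Lemma~\ref{lm:crossed} and the one from $R_dR_{d'}=R_{d'd}$) together with the equivariance of $\Theta$; note that only condition~(a), not condition~(b), is needed.
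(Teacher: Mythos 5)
Your proposal is correct and follows essentially the same route as the paper's proof: apply Lemma~\ref{lm:crossed} to $\cA=\cC\#\FF G$ with $u_g=g$, identify $\cA$ with $\End_\FF(\cC)$ via condition~(a) of Definition~\ref{df:Galois} so that $\Cent_\cA(\cA_e)$ becomes the algebra $R_\cC$ of right multiplications, and check that $c\mapsto R_c$ intertwines the $G$-actions. The only cosmetic difference is bookkeeping: the paper records $c\mapsto R_c$ directly as a $G$-equivariant antiisomorphism $\cC\to\Cent_\cA(\cA_e)$ and composes the two antiisomorphisms, while you phrase the same map as an isomorphism $\cC^{\op}\to\Cent_\cA(\cA_e)$ and cancel the two opposites.
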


\begin{proof}
Denote $\cA=\cC\# \FF G$. Clearly, $\cA$ satisfies the hypothesis of Lemma \ref{lm:crossed} (we can take $u_g=g$),
so $\Cent_{\cA\#(\FF G)^*}(\cA)$ is antiisomorphic to $\Cent_\cA(\cA_e)$ as a $G$-algebra. 
On the other hand, we can identify $\cA$ with $\End_\FF(\cC)$ as in condition (a) of 
Definition \ref{df:Galois}, i.e., an element $c g\in \cA$ is identified with the map $x\mapsto c(g\cdot x)$.
Then the identity component $\cA_e=\cC$ is identified with $L_\cC$, and $\Cent_\cA(\cA_e)$ with  
$\Cent_{\End_\FF(\cC)}(L_\cC)=R_\cC$, where $L_{\cC}$ (respectively, $R_{\cC}$) denotes the subspace of $\End_\FF(\cC)$
spanned by the operators $L_c$ (respectively, $R_c$) of left (respectively, right) multiplication by elements $c\in\cC$. 
Finally, the $G$-action on $\cA$, given by $g\cdot a=gag^{-1}$, corresponds to the natural $G$-action on $\End_\FF(\cC)$:
$(g\cdot f)(x)=g\cdot(f(g^{-1}\cdot x))$ for all $g\in G$, $f\in\End_\FF(\cC)$ and $x\in\cC$. 
One easily checks that $g\cdot R_c=R_{g\cdot c}$ for all $c\in\cC$. 
Therefore, the mapping $c\mapsto R_c$ is an antiisomorphism $\cC\to R_\cC$ as $G$-algebras.
\end{proof}

\begin{remark} In \cite[Lemma 5]{PP} it is incorrectly asserted that 
the centralizer $\Cent_{(\cC\# \FF G)\#(\FF G)^*}(\cC\# \FF G)$ 
is isomorphic to the opposite algebra $\cC^{\textup{op}}$. The problem lies in the fact that 
$u_1\cdot v_1$ is computed instead of $(u\cdot v)_1$ (in the notation in \cite{PP}).
\end{remark}

As a consequence of Lemma \ref{lm:PP5}, the map
\[
\begin{split}
\vartheta: \uE_G(\FF)&\longrightarrow \Br_G(\FF)\\
[\cC]_{\textup{$G$-alg}}&\mapsto [\cC\# \FF G]_G,
\end{split}
\]
is a right inverse of $\zeta$. Note that, since the algebra $\cC\# \FF G$ is isomorphic to 
$\End_\FF(\cC)$, its class in $\Br(\FF)$ is trivial, which means that $\vartheta([\cC]_{\textup{$G$-alg}})$ 
is in the kernel of the ``forgetful'' map $\varphi:\Br_G(\FF)\to\Br(\FF)$.
It follows that $\vartheta$ is an isomorphism $\uE_G(\FF)\simeq \ker\varphi$.

\begin{corollary}\label{co:PP}
Let $G$ be a finite abelian group. Then the map
\[
\begin{split}
\Br_G(\FF)&\longrightarrow \Br(\FF)\times \uE_G(\FF)\\
[\cA]_G\ &\mapsto \left([\cA],[\Gamma(\cA)]_{\textup{$G$-alg}}\right)
\end{split}
\]
is a group isomorphism, and its inverse is the map
\[
\begin{split}
\Br(\FF)\times \uE_G(\FF)&\longrightarrow \Br_G(\FF)\\
\left([\cB],[\cC]_{\textup{$G$-alg}}\right)&\mapsto 
[\cB\otimes_\FF(\cC\# \FF G)]_G,
\end{split}
\]
where the $G$-grading on $\cB\otimes_\FF(\cC\# \FF G)$ is given by
\[
\left(\cB\otimes_\FF(\cC\# \FF G)\right)_g=\cB\otimes_\FF(\cC g).
\]
\end{corollary}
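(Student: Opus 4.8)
The plan is to recognize the stated map as the combination $\Psi\bydef(\varphi,\zeta)$ of the ``forgetful'' homomorphism $\varphi\colon\Br_G(\FF)\to\Br(\FF)$, $[\cA]_G\mapsto[\cA]$, and the homomorphism $\zeta$ from Theorem~\ref{th:PP}, since by definition $\zeta([\cA]_G)=[\Gamma(\cA)]_{\textup{$G$-alg}}$ for central simple $\cA$. As $\varphi$ and $\zeta$ are group homomorphisms, so is $\Psi$. For the inverse, I would first confirm that the proposed map $\Theta\colon([\cB],[\cC]_{\textup{$G$-alg}})\mapsto[\cB\otimes_\FF(\cC\#\FF G)]_G$ coincides with $\iota([\cB])$ multiplied by $\vartheta([\cC]_{\textup{$G$-alg}})=[\cC\#\FF G]_G$ in the group $\Br_G(\FF)$. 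Indeed, $\iota([\cB])=[\cB_0]_G$, where $\cB_0$ carries the trivial grading, so the graded tensor product $\cB_0\otimes_\FF(\cC\#\FF G)$ has homogeneous components $\bigl(\cB\otimes_\FF(\cC\#\FF G)\bigr)_g=\cB\otimes_\FF(\cC g)$ precisely because $\cB_0$ is concentrated in degree $e$; this is exactly the grading prescribed in the statement. Hence $\Theta([\cB],[\cC]_{\textup{$G$-alg}})=\iota([\cB])\,\vartheta([\cC]_{\textup{$G$-alg}})$, and $\Theta$ is a homomorphism because $\iota$ and $\vartheta$ are (the latter being an isomorphism onto $\ker\varphi$, as noted just before the corollary) and $\Br_G(\FF)$ is abelian.

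Next I would verify that $\Psi$ and $\Theta$ are mutually inverse using only the four structural relations $\varphi\circ\iota=\id$, $\zeta\circ\vartheta=\id$, $\vartheta(\uE_G(\FF))\subseteq\ker\varphi$, and $\zeta\circ\iota=1$ (the last from exactness of \eqref{eq:PP}). For $\Psi\circ\Theta$ applied to $([\cB],[\cC]_{\textup{$G$-alg}})$, the first coordinate is $\varphi(\iota([\cB]))\,\varphi(\vartheta([\cC]_{\textup{$G$-alg}}))=[\cB]$ and the second is $\zeta(\iota([\cB]))\,\zeta(\vartheta([\cC]_{\textup{$G$-alg}}))=[\cC]_{\textup{$G$-alg}}$, so $\Psi\circ\Theta=\id$. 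For $\Theta\circ\Psi$, set $b=\Theta(\Psi(x))=\iota(\varphi(x))\,\vartheta(\zeta(x))$ for $x\in\Br_G(\FF)$. Applying $\zeta$ gives $\zeta(b)=\zeta(x)$, so $bx^{-1}\in\ker\zeta=\im\iota$, say $bx^{-1}=\iota(a)$; applying $\varphi$ gives $\varphi(b)=\varphi(x)$, whence $a=\varphi(\iota(a))=\varphi(bx^{-1})=1$ and therefore $b=x$. This yields $\Theta\circ\Psi=\id$ and finishes the argument.

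The only genuinely nonroutine point is the identification in the first paragraph: one must confirm that the explicit representative $\cB\otimes_\FF(\cC\#\FF G)$ with the displayed grading realizes the product $\iota([\cB])\,\vartheta([\cC]_{\textup{$G$-alg}})$ in $\Br_G(\FF)$, i.e., that tensoring by the trivially graded $\cB$ leaves the Galois datum recorded by $\zeta$ undisturbed. Everything else is the abelian splitting lemma applied to the canonically split sequence \eqref{eq:PP}, and the computations above are formal manipulations of the four relations. In particular, no new analysis of smash products or centralizers is required beyond what Theorem~\ref{th:PP} and Lemmas~\ref{lm:crossed} and \ref{lm:PP5} already supply.
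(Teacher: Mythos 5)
Your proposal is correct and follows essentially the same route as the paper, which states Corollary~\ref{co:PP} without a separate proof precisely because it is the standard splitting-lemma consequence of the split exact sequence \eqref{eq:PP} together with the facts recorded just before it ($\varphi\circ\iota=\id$, $\zeta\circ\vartheta=\id$, $\im\vartheta\subseteq\ker\varphi$, and exactness), plus the observation that $\cB\otimes_\FF(\cC\#\FF G)$ with the displayed grading represents $\iota([\cB])\,\vartheta([\cC]_{\textup{$G$-alg}})$. Your write-up simply makes explicit the routine verifications the paper leaves to the reader, and does so correctly.
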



\section{From graded-division algebras to Galois extensions}\label{se:div_to_Gal}

Let $\cD=\bigoplus_{g\in G}\cD_g$ be a central simple graded-division algebra over $\FF$.
Our aim in this section is to express the $G$-Galois extension $\Gamma(\cD)=\Cent_{\cD\# (\FF G)^*}(\cD)$ in simpler terms. 
First we will assume that the support of $\cD$ is the entire $G$, i.e., $\cD_g\ne 0$ for all $g\in G$. 
Then we have a $G$-action on $\cC\bydef\Cent_\cD(\cD_e)$ given by \eqref{eq:sigma}.

\begin{proposition}\label{prop:GCD}
Let $G$ be a finite abelian group and let $\cD$ be a central simple $G$-graded-division algebra with support $G$. 
Then $\Gamma(\cD)$ is a simple algebra, and it is antiisomorphic to $\Cent_\cD(\cD_e)$ as a $G$-algebra. 
\end{proposition}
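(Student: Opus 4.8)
The plan is to derive both assertions by combining Lemma~\ref{lm:crossed} with the classical Double Centralizer Theorem. Since the support of $\cD$ equals $G$ and $\cD$ is a graded-division algebra, every nonzero homogeneous element is invertible; in particular each component $\cD_g$ contains an invertible element, so $\cD$ meets the hypothesis of Lemma~\ref{lm:crossed}. That lemma then tells us that $\Gamma(\cD)=\Cent_{\cD\#(\FF G)^*}(\cD)$ is antiisomorphic, as a $G$-algebra, to $\cC\bydef\Cent_\cD(\cD_e)$. This already proves the second assertion, and it reduces the first to proving that $\cC$ is simple, because an algebra is simple if and only if its opposite algebra is (the two have the same two-sided ideals) and $\Gamma(\cD)\cong\cC^{\op}$.

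For the simplicity of $\cC$ I would use the hypothesis that $\cD$ is central simple over $\FF$. Its identity component $\cD_e$ is a division algebra, and hence a unital simple $\FF$-subalgebra of $\cD$ (the unit of $\cD$ lies in $\cD_e$). The Double Centralizer Theorem, applied to the simple subalgebra $\cD_e$ of the finite-dimensional central simple algebra $\cD$, then guarantees that the centralizer $\Cent_\cD(\cD_e)=\cC$ is simple; it also yields $Z(\cC)=Z(\cD_e)$, $\Cent_\cD(\cC)=\cD_e$ and $\dim_\FF\cD=(\dim_\FF\cD_e)(\dim_\FF\cC)$, though only the simplicity is needed here. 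Consequently $\Gamma(\cD)\cong\cC^{\op}$ is simple as well, and both claims follow.

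I do not expect a serious obstacle in this argument: the content lies entirely in selecting the correct tools rather than in any hard computation. The two points that require care are checking that the full-support hypothesis is exactly what allows Lemma~\ref{lm:crossed} to be invoked, and observing that $\cD_e$, being a division algebra, is a simple subalgebra so that the Double Centralizer Theorem applies; the rest is formal. It is worth emphasizing that ``central simple'' must here be read in the ungraded sense, namely $\cD$ simple with $Z(\cD)=\FF 1$, since this is what makes $\cD_e$ a simple subalgebra of a central simple algebra. For a merely graded-central-simple graded-division algebra the conclusion can fail: for instance $\FF\times\FF$ with its natural $\ZZ/2$-grading is graded-central and graded-simple, yet the centralizer of its identity component is $\FF\times\FF$, which is not simple.
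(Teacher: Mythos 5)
Your proposal is correct and follows essentially the same route as the paper: invoke Lemma~\ref{lm:crossed} (applicable because the full support and graded-division hypotheses put an invertible element in every component $\cD_g$) to get the $G$-equivariant antiisomorphism $\Gamma(\cD)\cong\Cent_\cD(\cD_e)^{\op}$, and then apply the Double Centralizer Theorem to the simple subalgebra $\cD_e$ of the central simple algebra $\cD$ to get simplicity of the centralizer. Your added remarks --- that simplicity passes to the opposite algebra, and that ``central simple'' must be meant in the ungraded sense (with the $\FF\times\FF$ example showing the graded-central-simple version fails) --- are accurate but not needed beyond what the paper records.
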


\begin{proof}
Since $\cD$ satisfies the hypothesis of Lemma \ref{lm:crossed}, $\Gamma(\cD)$ is antiisomorphic to $\cC=\Cent_\cD(\cD_e)$
as a $G$-algebra. The fact that $\cC$ is simple follows from the Double Centralizer Theorem 
(see e.g. \cite[\S 8.5]{Scharlau}), because $\cD$ is central simple, and $\cD_e$ is a division algebra (hence simple).
\end{proof}

Our next step is to suppress the condition on $G$ being the support of $\cD$. 
Denote the support of $\cD$ by $T$ (a subgroup of $G$). Again, let $\cC=\Cent_\cD(\cD_e)$. 
Then we have a $T$-action on $\cC$, and Proposition \ref{prop:GCD} shows that $\cC$ is a $T$-Galois extension of $\FF$. 
Consider the vector space
\[
\Ind_T^G(\cC)\bydef\Hom_{\FF T}(\FF G,\cC),
\]
which can be identified with the algebra of $T$-equivariant maps:
\[
\{f:G\rightarrow \cC\mid f(tg)=t\cdot f(g)\ \forall t\in T\}.
\]
Define pointwise multiplication on $\Ind_T^G(\cC)$: $(f_1 f_2)(g)=f_1(g)f_2(g)$ for all $g\in G$.

As an algebra, $\Ind_T^G(\cC)$ is isomorphic to the Cartesian product of $[G:T]$ copies of $\cC$. 
There is a natural action of $G$ on $\Ind_T^G(\cC)$ by automorphisms, given by
\[
(g\cdot f)(h)=f(hg)
\]
for $g,h\in G$ and $f\in\Ind_T^G(\cC)$.

\begin{theorem}\label{th:GTCD}
Let $G$ be a finite abelian group and let $\cD$ be a central simple $G$-graded-division algebra 
with support $T$. Then $\Gamma(\cD)$ is antiisomorphic to $\Ind_T^G(\cC)$ as a $G$-algebra, 
where $\cC=\Cent_\cD(\cD_e)$ is a simple $T$-Galois extension of $\FF$. 
\end{theorem}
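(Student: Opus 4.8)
The plan is to combine a reduction to Proposition~\ref{prop:GCD} with a direct computation of the centralizer $\Gamma(\cD)=\Cent_{\cD\#(\FF G)^*}(\cD)$ in the spirit of Lemma~\ref{lm:crossed}, the only new feature being that the support $T$ may be a proper subgroup of $G$.

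First I would dispose of the statement about $\cC$. Regarding $\cD$ as graded by its support $T$, it is still a central simple graded-division algebra with \emph{full} support (the graded-center, graded-simplicity, and graded-division conditions are unchanged, since the homogeneous components are the same), so Proposition~\ref{prop:GCD}, applied with $T$ in place of $G$, already identifies $\cC=\Cent_\cD(\cD_e)$ as a simple $T$-Galois extension of $\FF$ carrying the $T$-action $\sigma$ of \eqref{eq:sigma}. In particular $\cC$ is a graded subalgebra of $\cD$ with support contained in $T$; I will use repeatedly that $\cC_k:=\cC\cap\cD_k=0$ for $k\notin T$.

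Next I would recompute the centralizer exactly as in Lemma~\ref{lm:crossed}. Writing $x=\sum_{g}a^g\epsilon_g$ and equating coefficients against homogeneous $b_h\in\cD_h$ gives the same condition \eqref{eq:ahg}, namely $a^{hg}b_h=b_h a^g$; the key difference is that this is now vacuous whenever $h\notin T$ (as $\cD_h=0$), so it only constrains the $a^g$ along $T$. Taking $h=e$ forces $a^g\in\cC$ for all $g$, and taking $h=t\in T$ with invertible $b_t$ (available since $\cD$ is a graded-division algebra with $\cD_t\neq0$) and $g$ arbitrary forces $a^{tg}=t\cdot a^g$. Thus $g\mapsto a^g$ is precisely a $T$-equivariant function, and
\[
\Gamma(\cD)=\Bigl\{\textstyle\sum_{g\in G}f(g)\,\epsilon_g\;\Bigm|\;f\in\Ind_T^G(\cC)\Bigr\}.
\]

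Finally I would show that $\psi\colon\Ind_T^G(\cC)\to\Gamma(\cD)$, $f\mapsto\sum_g f(g)\epsilon_g$, is a $G$-equivariant antiisomorphism. The $G$-equivariance, comparing $(g\cdot f)(h)=f(hg)$ with $g\cdot(a\epsilon_h)=a\epsilon_{hg^{-1}}$, is a one-line reindexing. The antimultiplicativity $\psi(f_1)\psi(f_2)=\psi(f_2f_1)$ is the step I expect to be the only real obstacle: expanding via the smash-product rule $(a\epsilon_g)(b\epsilon_h)=(ab_{gh^{-1}})\epsilon_h$ and substituting $g=kh$ yields, for each fixed $h$, the sum $\sum_{k}f_1(kh)\,(f_2(h))_k$. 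Here the support restriction is what makes everything work: $(f_2(h))_k=0$ unless $k\in T$, and for $k\in T$ equivariance gives $f_1(kh)=k\cdot f_1(h)$, so the twisting relation $(k\cdot c)b_k=b_k c$ (the case $g=e$ of \eqref{eq:ahg}, valid for $b_k\in\cD_k$ and $c\in\cC$) collapses the sum to $\bigl(\sum_{k\in T}(f_2(h))_k\bigr)f_1(h)=f_2(h)f_1(h)$. Since pointwise multiplication gives $(f_2f_1)(h)=f_2(h)f_1(h)$, this is exactly $\psi(f_2f_1)$, completing the proof.
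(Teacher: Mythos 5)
Your proposal is correct, and it reaches the same destination by a mildly but genuinely different route in the one step that requires real work. You compute $\Gamma(\cD)=\Cent_{\cD\#(\FF G)^*}(\cD)$ exactly as the paper does (condition \eqref{eq:ahg} becomes vacuous off $T$, forcing $a^g\in\cC$ and $T$-equivariance), and you use the same map $\Psi:f\mapsto\sum_g f(g)\epsilon_g$; the difference is in how antimultiplicativity is established. The paper chooses a transversal $\{g_1,\ldots,g_m\}$ of $T$ in $G$, decomposes $\cD\#(\FF G)^*$ into the ideals $J_i=\bigoplus_{t\in T}\cD\epsilon_{tg_i}$, observes $\Gamma(\cD)=\bigoplus_i\Gamma(\cD)_i$ with each block isomorphic to $\cC^{\op}$, and then factors $\Psi$ through the isomorphism $\Ind_T^G(\cC)\simeq\cC\times\cdots\times\cC$ followed by the blockwise antiisomorphism supplied by Proposition \ref{prop:GCD}; this makes the block structure of $\Gamma(\cD)$ explicit, which is pleasant information in its own right. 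You instead verify $\psi(f_1)\psi(f_2)=\psi(f_2f_1)$ by one global smash-product computation, using the support restriction $(f_2(h))_k=0$ for $k\notin T$, the $T$-equivariance $f_1(kh)=k\cdot f_1(h)$, and the relation $(k\cdot c)\,b_k=b_k c$ for $c\in\cC$, $b_k\in\cD_k$ --- in effect generalizing the computation inside Lemma \ref{lm:crossed} to proper support rather than reducing to the full-support case; this is coordinate-free (no transversal, no ideal decomposition) and slightly more self-contained. One hair worth combing: in your identification of $\Gamma(\cD)$ you only derive the forward inclusion explicitly; the reverse inclusion (that every $T$-equivariant $f$ with values in $\cC$ actually centralizes $\cD$) is needed too, but it follows at once from the same relation $(t\cdot c)\,b_t=b_t c$ that you invoke for antimultiplicativity, so this is a matter of phrasing rather than a gap.
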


\begin{proof}
Take a transversal $\{g_1=e,g_2,\ldots,g_m\}$ of $T$ in $G$, so that $G$ is the 
disjoint union $G=Tg_1\,\dot\cup\, Tg_2\,\dot\cup\,\cdots\,\dot\cup\, Tg_m$. Then we have
\[
\cD\# (\FF G)^*=\bigoplus_{i=1}^m\Bigl(\bigoplus_{t\in T}\cD\epsilon_{tg_i}\Bigr)
\]
and, for each $i=1,\ldots,m$, $J_i\bydef\bigoplus_{t\in T}\cD\epsilon_{tg_i}$ is an ideal of $\cD\# (\FF G)^*$ 
because $\epsilon_g d_t=d_t\epsilon_{t^{-1}g}$ for all $g\in G$, $t\in T$, $d_t\in\cD_t$, and 
$t^{-1}g$ is in the coset $Tg$. 
Each $J_i$ is naturally isomorphic to $J_1=\bigoplus_{t\in T}\cD \epsilon_t\simeq\cD\# (\FF T)^*$ by means of the map
\[
d\epsilon_{tg_i}\mapsto d\epsilon_t.
\]

As in the proof of Lemma \ref{lm:crossed}, let $x=\sum_{g\in G}a^g\epsilon_g\in\cD\# (\FF G)^*$. Then, by \eqref{eq:ahg}, 
we have $x\in\Gamma(\cD)=\Cent_{\cD\# (\FF G)^*}(\cD)$ if and only if
\[
a^{tg}b_t=b_ta^g\quad\forall g\in G,\,t\in T,\,b_t\in\cD_t.
\]
This forces $a^g\in\cC$ and $a^{tg}=b_t a^g b_t^{-1}$ for any $0\neq b_t\in\cD_t$, i.e., 
$a^{tg}=t\cdot a^g$ for all $g\in G$ and $t\in T$.
Therefore, $\Gamma(\cD)$ is the subalgebra
\[
\Gamma(\cD)=\left\{
\sum_{i=1}^m\Bigl(\sum_{t\in T}(t\cdot a^{g_i})\epsilon_{tg_i}\Bigr)\mid 
a^{g_1},\ldots, a^{g_m}\in\cC
\right\},
\]
which is a direct sum of ideals: $\Gamma(\cD)=\Gamma(\cD)_1\oplus\cdots\oplus\Gamma(\cD)_m$, 
where
\[
\Gamma(\cD)_i\bydef\Bigl\{\sum_{t\in T}(t\cdot c)\epsilon_{tg_i}\mid c\in\cC\Bigr\}=\Gamma(\cD)\cap J_i.
\]
All $\Gamma(\cD)_i$ are isomorphic to $\Gamma(\cD)_1$, which is isomorphic to $\cC^{\textup{op}}$ 
by Proposition \ref{prop:GCD}.

Define an injective linear map
\[
\begin{split}
\Psi: \Ind_T^G(\cC)&\longrightarrow \cD\# (\FF G)^*\\
f&\mapsto \sum_{g\in G}f(g)\epsilon_g.
\end{split}
\]
Note that, for $f\in\Ind_T^G(\cC)$, we have $f(tg_i)=t\cdot f(g_i)$, so
\[
\Psi(f)=\sum_{i=1}^m\Bigl(\sum_{t\in T}\bigl(t\cdot f(g_i)\bigr)\epsilon_{tg_i}\Bigr).
\]
Therefore, the image of $\Psi$ is precisely $\Gamma(\cD)$. For any $h\in G$, we have
\[
\Psi(h\cdot f)=\sum_{g\in G}(h\cdot f)(g)\epsilon_g=\sum_{g\in G}f(gh)\epsilon_g
=h\cdot\Bigl(\sum_{g\in G}f(gh)\epsilon_{gh}\Bigr)=h\cdot\Psi(f),
\]
so $\Psi$ is $G$-equivariant.

Finally, $\Psi$ is the composition of the algebra isomorphism
\[
\begin{split}
\Ind_T^G(\cC)&\longrightarrow \cC\times\stackrel{m}{\cdots} \times\cC\\
f\ &\mapsto \bigl(f(g_1),\ldots,f(g_m)\bigr),
\end{split}
\]
and of the antiisomorphism (Proposition \ref{prop:GCD})
\[
\begin{split}
\cC\times\stackrel{m}{\cdots} \times\cC&\longrightarrow \Gamma(\cD)=\Gamma(\cD)_1\oplus\cdots\oplus\Gamma(\cD)_m\\
(c_1,\ldots,c_m)&\mapsto\Bigl(
\sum_{t\in T}(t\cdot c_1)\epsilon_{tg_1},\ldots,\sum_{t\in T}(t\cdot c_m)\epsilon_{tg_m}\Bigr),
\end{split}
\]
so $\Psi:\Ind_T^G(\cC)\to\Gamma(\cD)$ is an antiisomorphism of $G$-algebras.
\end{proof}

On $\Gamma(\cD)$, there is not only a $G$-action, but also a $G$-grading, coming from the $G$-grading on $\cC$:
$f\in\Ind_T^G(\cC)$ is homogeneous of degree $h\in G$ if $f(g)\in\cC_h$ for all $g\in G$. The grading on $\cC$ 
can be defined intrinsically by 
\begin{equation}\label{eq:MUgr}
\cC_h=\{a\in\cC\mid ab=(h\cdot b)a\ \forall b\in\cC\}.
\end{equation}

\begin{remark}\label{re:Miyashita-Ulbrich}
This is an instance of the so-called Miyashita-Ulbrich action defined in the context of Hopf-Galois extensions
(see e.g. \cite[Chapter 8]{Mont}), in this case of the Hopf algebra $(\FF G)^*$ on a $G$-Galois extension $\cA$ of $\FF$: 
this is the action $\cA\otimes(\FF G)^*\to\cA$, sending $a\otimes\varphi\mapsto a\leftharpoonup\varphi$, 
characterized by the property $ab=\sum_{g\in G}(g\cdot b)(a\leftharpoonup\epsilon_g)$ for all $a,b\in\cA$ 
(cf. \cite[Lemma 3.4]{GN}).
Another instance of Miyashita-Ulbrich action is the $T$-action on $\cC=\Cent_\cD(\cD_e)$, associated to $\cD$ 
as a Hopf-Galois extension of $\cD_e$.
\end{remark}

For any finite abelian groups $H\subset G$, the surjectivity of $\zeta$ in Theorem \ref{th:PP} (applied to $H$), 
together with Theorem \ref{th:GTCD} and the natural isomorphism $\Ind_T^G\simeq\Ind_H^G\circ\Ind_T^H$,
imply that the functor $\Ind_H^G$ sends $H$-Galois extensions to $G$-Galois extensions (cf. \cite[Proposition 3.2]{Dav}). 
We can say more: the group homomorphisms $\zeta=\zeta_G$ in Theorem \ref{th:PP} 
form a natural transformation of functors $\Br_G(\FF)\to\uE_G(\FF)$ with respect to monomorphisms of finite abelian groups.

\begin{remark}\label{re:direct_limit}
This allows us to extend Theorem \ref{th:PP} to arbitrary abelian groups. The definition of $\Br_G(\FF)$ in 
Subsection \ref{sse:intro_BrG} does not require that $G$ be finite, but the support of any finite-dimensional 
$G$-graded-division algebra is a finite subgroup of $G$, hence $\Br_G(\FF)$ can be identified with the direct limit 
of the groups $\Br_H(\FF)$ over all finite subgroups $H$ of $G$ ordered by inclusion. 
Clearly, $\iota=\iota_G$ is then identified with the direct limit of the group homomorphisms $\iota_H$. 
We \emph{define} $\uE_G(\FF)$ to be the direct limit of the groups $\uE_H(\FF)$ and $\zeta_G$ to be the direct limit of  
the group homomorphisms $\zeta_H$ over the same ordered set. Thus we obtain the short exact sequence \eqref{eq:PP}
for an arbitrary abelian group $G$, with a splitting given by the ``forgetful'' map $\Br_G(\FF)\to\Br(\FF)$.
\end{remark}

\smallskip

We will now investigate the relationship between $\cD$ and $\cC=\Cent_\cD(\cD_e)$, and the structure of the latter. 
These results will be used in the next section, but are also of independent interest.

Recall that, given a group $K$, a field $\LL$ and a $2$-cocycle $\tau\in \Zc^2(K,\LL^\times)$ (with trivial 
action of $K$ on $\LL^\times$), the \emph{twisted group algebra} $\LL^\tau K$ is the $\LL$-algebra with 
basis $\{X_k\mid k\in K\}$ and multiplication given by 
\[
X_{k_1}X_{k_2}=\tau(k_1,k_2)X_{k_1k_2}
\]
for any $k_1,k_2\in K$. $\LL^\tau K$ is naturally $K$-graded, and the graded-isomorphism class is 
determined by the class of $\tau$ in the second cohomology group: $[\tau]\in \Hc^2(K,\LL^\times)$. 
Any graded-division algebra over $\LL$ with support $K$ and $1$-dimensional homogeneous components is, 
up to a graded isomorphism, a twisted group algebra $\LL^\tau K$.

We list some facts from \cite{BEK}, which are easy to verify and hold in a more general setting than what we 
have here:

\begin{proposition}[{\cite[Section 2]{BEK}}]\label{pr:DCK}
Let $G$ be a finite group, and let $\cD=\bigoplus_{g\in G}\cD_g$ be a graded-division ring with support $G$. 
Assume that $\cD$ is finite-dimen\-si\-onal over the field $\FF\bydef Z(\cD)\cap\cD_e$.
Denote $\LL\bydef Z(\cD_e)$,
\[
K\bydef \{k\in G\mid \cD_k\cap \Cent_\cD(\LL)\neq 0\},
\]
$\cD_K\bydef\bigoplus_{k\in K}\cD_k$, and $\cC\bydef\Cent_\cD(\cD_e)$. Then the following assertions hold:
\begin{romanenumerate}
\item $K$ is a normal subgroup of $G$ and $\Cent_\cD(\LL)=\cD_K$.

\item The extension $\LL/\FF$ is a Galois field extension, and the mapping
\begin{equation}\label{eq:barsigma}
\begin{split}
\bar\sigma: G&\longrightarrow \Aut_\FF(\LL)=\Gal(\LL/\FF)\\
g\,&\mapsto (\Int u_g)\vert_\LL\quad\text{for any }0\ne u_g\in\cD_g
\end{split}
\end{equation}
is a well-defined surjective group homomorphism with kernel $K$.

\item $\cC$ is a graded subalgebra of $\cD$ with support $K$ and $\cC_e=\LL$, 
hence graded-isomorphic to the twisted group algebra $\LL^\tau K$ for some $\tau\in\Zc^2(K,\LL^\times)$.

\item $\cD_K\simeq \cD_e\otimes_\LL\cC$.\qed
\end{romanenumerate}
\end{proposition}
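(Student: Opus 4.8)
The plan is to derive everything from the single structural fact that, because $\cD$ is a graded-division ring with full support, each homogeneous component is $\cD_g=\cD_e u_g$ for an invertible $u_g\in\cD_g$, and conjugation $\Int u_g$ is a grading-compatible automorphism of $\cD$ restricting to an automorphism of $\cD_e$. First I would set up $\bar\sigma$ from \eqref{eq:barsigma}: since $\FF\subseteq Z(\cD)$ each $u_g$ fixes $\FF$ pointwise, and since $\LL=Z(\cD_e)$ is stabilized by any automorphism of $\cD_e$, the restriction $(\Int u_g)\vert_\LL$ lands in $\Aut_\FF(\LL)$. Replacing $u_g$ by $au_g$ ($a\in\cD_e^\times$) changes $\Int u_g$ by $\Int a$, and $(\Int a)\vert_\LL=\id$ because $\LL$ is central in $\cD_e$; this gives well-definedness, and taking $u_{gh}=u_gu_h$ shows $\bar\sigma$ is a homomorphism. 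Its kernel is exactly $\{g:u_g\in\Cent_\cD(\LL)\}$, which is $K$ (a nonzero homogeneous element being invertible, $\cD_g$ meets $\Cent_\cD(\LL)$ iff $u_g$ itself centralizes $\LL$); being a kernel, $K$ is normal. For the remaining equality $\Cent_\cD(\LL)=\cD_K$ in (i), note that $\Cent_\cD(\LL)$ is graded, since comparing degree-$g$ parts in $x\lambda=\lambda x$ for $\lambda\in\LL\subseteq\cD_e$ forces each homogeneous part of $x$ to centralize $\LL$; thus $\Cent_\cD(\LL)=\bigoplus_g(\cD_g\cap\Cent_\cD(\LL))$, and the summand is nonzero exactly for $g\in K$. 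It then suffices to upgrade $\cD_k\cap\Cent_\cD(\LL)\neq0$ to $\cD_k\subseteq\Cent_\cD(\LL)$: writing $\cD_k=\cD_e v_k$ with $v_k$ centralizing $\LL$, every $av_k$ centralizes $\LL$ because $a\in\cD_e$ automatically commutes with $\LL=Z(\cD_e)$.

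Part (ii) is where field theory enters. The image $\bar\sigma(G)$ is a finite group of $\FF$-automorphisms of the field $\LL$, so by Artin's theorem $\LL/\LL^{\bar\sigma(G)}$ is Galois with group $\bar\sigma(G)$, and the task is to identify the fixed field with $\FF$. A $\lambda\in\LL$ is fixed by every $\bar\sigma(g)$ iff it commutes with every $u_g$, equivalently (as $\lambda$ already centralizes $\cD_e$) with every homogeneous element, i.e. iff $\lambda\in Z(\cD)\cap\cD_e=\FF$; conversely $\FF$ is fixed pointwise. Hence $\LL^{\bar\sigma(G)}=\FF$, so $\LL/\FF$ is Galois and $\bar\sigma$ surjects onto $\Gal(\LL/\FF)$ with kernel $K$, giving $G/K\simeq\Gal(\LL/\FF)$.

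For (iii), the same degree-comparison argument shows $\cC=\Cent_\cD(\cD_e)$ is graded with $\cC_e=\cD_e\cap\Cent_\cD(\cD_e)=\LL$, and the inclusion $\cC\subseteq\Cent_\cD(\LL)=\cD_K$ gives $\supp\cC\subseteq K$. The reverse inclusion is the crux: for $k\in K$ the automorphism $(\Int u_k)\vert_{\cD_e}$ fixes $\LL=Z(\cD_e)$, hence is an $\LL$-algebra automorphism of the central simple $\LL$-algebra $\cD_e$ and so is inner by Skolem--Noether, say $(\Int u_k)\vert_{\cD_e}=(\Int a)\vert_{\cD_e}$ with $a\in\cD_e^\times$; then $a^{-1}u_k$ is a nonzero element of $\cD_k$ centralizing $\cD_e$, so $\cC_k\neq0$. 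Thus $\cC$ is a graded-division algebra over its central subfield $\LL=\cC_e$, with support $K$ and one-dimensional homogeneous components, and the fact recalled just before the proposition identifies it with a twisted group algebra $\LL^\tau K$ for some $\tau\in\Zc^2(K,\LL^\times)$.

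Finally, (iv) follows by a multiplication-and-count argument. Since $\cD_e$ and $\cC$ commute elementwise (by definition of $\cC$) and intersect in $\LL$, the map $\cD_e\otimes_\LL\cC\to\cD_K$, $a\otimes c\mapsto ac$, is a homomorphism of $\LL$-algebras; it is surjective because each $\cD_k=\cD_e u_k$ with the representative $u_k\in\cC_k$ produced above, and a dimension count over $\LL$, using $\dim_\LL\cD_k=\dim_\LL\cD_e$ and $\dim_\LL\cC=|K|$, shows both sides have dimension $|K|\cdot\dim_\LL\cD_e$, so the surjection is an isomorphism. The only non-bookkeeping inputs are Artin's theorem in (ii) and Skolem--Noether in (iii), which I expect to be the main obstacle, namely pinning down the support of $\cC$; everything else reduces to degree comparison and linear algebra.
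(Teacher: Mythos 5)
Your proof is correct. Note that the paper itself contains no argument for this proposition --- it is quoted from \cite[Section 2]{BEK} and stated with the proof omitted (``easy to verify'') --- so there is no in-paper proof to compare against line by line. Your route is precisely the standard one behind the cited facts: $\cD_g=\cD_e u_g$ gives well-definedness and multiplicativity of $\bar\sigma$, gradedness of $\Cent_\cD(\LL)$ by degree comparison gives $\Cent_\cD(\LL)=\cD_K$ with $K=\ker\bar\sigma$, Artin's lemma applied to $\bar\sigma(G)$ together with the identification $\LL^{\bar\sigma(G)}=Z(\cD)\cap\cD_e=\FF$ gives part (ii), Skolem--Noether (legitimate since $\cD_e$ is a finite-dimensional division algebra with center $\LL$) produces a nonzero element of $\cC_k$ for each $k\in K$, and the multiplication map $\cD_e\otimes_\LL\cC\to\cD_K$ plus the count $\dim_\LL\cD_k=\dim_\LL\cD_e$, $\dim_\LL\cC=|K|$ gives part (iv); all of these steps check out, including the small points you left implicit ($\cC_k=\LL c$ for any $0\neq c\in\cC_k$ since $\cC_e=\LL$, and centrality of $\LL$ in $\cC$, which is what makes $\cC$ an $\LL$-algebra and the $2$-cocycle $\tau$ one with trivial $K$-action).
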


\begin{corollary}\label{co:DCK}
Under the hypotheses of Proposition \ref{pr:DCK}, assume further that $\cD$ is simple and $Z(\cD)=\FF$. 
Then $\cC$ is simple with $Z(\cC)=\LL$ and the order $\lvert K\rvert$ is a square.
\end{corollary}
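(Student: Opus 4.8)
The plan is to obtain all three conclusions from the Double Centralizer Theorem applied to the pair $(\cD,\cD_e)$, combined with the twisted group algebra description of $\cC$ already furnished by Proposition \ref{pr:DCK}(iii). First I would observe that the extra hypotheses say exactly that $\cD$ is a central simple $\FF$-algebra. Since $\cD_e$ is the identity component of a graded-division ring, all of its nonzero elements are invertible, so $\cD_e$ is a division algebra and in particular a simple subalgebra of $\cD$. This is precisely the setting of the Double Centralizer Theorem (see e.g. \cite[\S 8.5]{Scharlau}).

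Applying that theorem to the central simple algebra $\cD$ and its simple subalgebra $\cD_e$ yields at once that $\cC=\Cent_\cD(\cD_e)$ is simple, which is the first assertion, together with the equality of centers $Z(\cC)=Z(\cD_e)$. Since $\LL=Z(\cD_e)$ by definition, this gives $Z(\cC)=\LL$, the second assertion, so that $\cC$ is a central simple algebra over $\LL$.

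For the square assertion, I would invoke Proposition \ref{pr:DCK}(iii): $\cC$ is graded-isomorphic to the twisted group algebra $\LL^\tau K$, which has $\LL$-basis $\{X_k\mid k\in K\}$, and hence $\dim_\LL\cC=\lvert K\rvert$. On the other hand, any finite-dimensional central simple algebra over a field has dimension a perfect square, since after extension of scalars to an algebraic closure it becomes a full matrix algebra $\Mat_n$, of dimension $n^2$. Combining these two facts, $\lvert K\rvert=\dim_\LL\cC$ is a square.

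I do not expect a genuine obstacle here: the whole corollary reduces to citing two standard results in the right order. The only point requiring care is to use the Double Centralizer Theorem in its full strength, so that it delivers the center equality $Z(\cC)=Z(\cD_e)$ and not merely the dimension formula $\dim_\FF\cD_e\cdot\dim_\FF\cC=\dim_\FF\cD$; once $Z(\cC)=\LL$ is secured, the remaining steps are routine.
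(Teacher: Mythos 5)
Your proof is correct and follows essentially the same route as the paper: the paper likewise applies the Double Centralizer Theorem to the simple subalgebra $\cD_e$ of the central simple algebra $\cD$ to get simplicity of $\cC$ and $Z(\cC)=\LL$ (the paper spells out the center computation as $Z(\cC)=\cC\cap\Cent_\cD(\cC)=\cC\cap\cD_e=\LL$, which is exactly the ``full strength'' consequence you invoke), and then concludes that $\lvert K\rvert=\dim_\LL\cC$ is a square since $\cC$ is central simple over $\LL$.
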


\begin{proof}
As in the proof of Proposition \ref{prop:GCD}, we can apply the Double Centralizer Theorem to 
the simple subalgebra $\cD_e$ of the central simple algebra $\cD$, 
so $\cC=\Cent_\cD(\cD_e)$ is simple, and $Z(\cC)=\cC\cap\Cent_\cD(\cC)=\cC\cap\cD_e=\LL$. 
Since $\cC$ is a central simple $\LL$-algebra, $\lvert K\rvert=\dim_\LL\cC$ is a square.
\end{proof}

Recall that the $G$-action on $\cC$ is defined by the group homomorphism $\sigma: G\rightarrow \Aut_\FF(\cC)$, 
$g\mapsto \sigma_g$, given by \eqref{eq:sigma}.
Comparing with \eqref{eq:barsigma}, we see that $\bar{\sigma}_g$ is the restriction of $\sigma_g$ to $\LL$, so 
$\sigma_g$ is a $\bar{\sigma}_g$-semilinear automorphism of $\cC$ as an $\LL$-algebra: 
$\sigma_g(lc)=\bar{\sigma}_g(l)\sigma_g(c)$ for all $l\in\LL$ and $c\in\cC$.
 
\begin{proposition}\label{pr:Kbeta}
Under the hypotheses of Proposition \ref{pr:DCK}, assume further that $G$ is abelian. 
Fixing a nonzero element $X_k$ in each homogeneous component $\cC_k$, we identify $\cC$ with 
$\LL^\tau K=\bigoplus_{k\in K}\LL X_k$. For any $k\in K$ and $g\in G$, define the element $f_k(g)\in\LL^\times$ by
\begin{equation}\label{eq:fks}
\sigma_g(X_k)=f_k(g)X_k.
\end{equation} 
Then we have the following:
\begin{romanenumerate}
\item For any $k\in K$, $f_k:G\rightarrow \LL^\times$ is a $1$-cocycle: $f_k\in\Zc^1(G,\LL^\times)$.

\item Replacing the element $X_k$ by $X_k'=lX_k$, $l\in\LL^\times$, changes $f_k$ to the cohomologous $1$-cocycle 
$f_k'=(\dc l)f_k$, where $\dc l:G\to\LL^\times$ is the $1$-coboundary $g\mapsto \bar\sigma_g(l)l^{-1}$. 
In particular, the class $[f_k]$  of $f_k$ in the cohomology group 
$\Hc^1(G,\LL^\times)=\Zc^1(G,\LL^\times)/\Bc^1(G,\LL^\times)$ does not depend on the choice of $X_k$.

\item The alternating bicharacter $\beta: K\times K\rightarrow \LL^\times$ given by 
\[
\beta(k_1,k_2)=\tau(k_1,k_2)\tau(k_2,k_1)^{-1}
\]
takes values in $\FF^\times$, depends only on the class $[\tau]\in\Hc^2(K,\LL^\times)$, and satisfies
\begin{equation}\label{eq:betaf}
f_k(g)=\beta(g,k)\quad\forall k,g\in K. 
\end{equation}

\item For any $k_1,k_2\in K$, 
\begin{equation}\label{eq:fk1k2}
f_{k_1}f_{k_2}=\dc\bigl(\tau(k_1,k_2)\bigr)f_{k_1k_2}.
\end{equation}

\item The map
\begin{equation}\label{eq:KH1}
\begin{split}
f:K&\longrightarrow \Hc^1(G,\LL^\times)\\
k\,&\mapsto\quad [f_k]
\end{split}
\end{equation}
is a group homomorphism whose kernel is the support of the graded subalgebra $Z(\cD)$. 

\item The following are equivalent: \textup{(a)} $\cD$ is central simple over $\FF$, 
\textup{(b)} $\cC$ is central simple over $\LL$, and \textup{(c)} $\beta$ is nondegenerate.
\end{romanenumerate}
\end{proposition}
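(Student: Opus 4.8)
The plan is to prove the six assertions roughly in the order stated, since each one builds on the previous: (i) establishes that $f_k$ is a cocycle, (ii) its cohomology class is well-defined, (iii) identifies the values $f_k(g)$ for $g\in K$ with the bicharacter $\beta$, (iv) records the multiplicativity of $f$ up to a coboundary, (v) upgrades (iv) to a genuine homomorphism into cohomology and computes its kernel, and (vi) translates nondegeneracy of $\beta$ into central simplicity. The recurring tool throughout will be the semilinearity relation $\sigma_g(lc)=\bar\sigma_g(l)\sigma_g(c)$ already noted before the statement, together with the multiplication rule $X_{k_1}X_{k_2}=\tau(k_1,k_2)X_{k_1k_2}$ in $\LL^\tau K$ and the definition \eqref{eq:MUgr} of the grading on $\cC$.

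\textbf{Steps (i)--(iv).}
For (i), I would apply $\sigma_{g_1g_2}=\sigma_{g_1}\sigma_{g_2}$ to $X_k$: expanding $\sigma_{g_1}\bigl(f_k(g_2)X_k\bigr)=\bar\sigma_{g_1}\bigl(f_k(g_2)\bigr)f_k(g_1)X_k$ and comparing with $f_k(g_1g_2)X_k$ yields exactly the $1$-cocycle identity $f_k(g_1g_2)=\bar\sigma_{g_1}\bigl(f_k(g_2)\bigr)f_k(g_1)$. For (ii), replacing $X_k$ by $lX_k$ and using semilinearity gives $\sigma_g(lX_k)=\bar\sigma_g(l)f_k(g)X_k$, so the new cocycle is $\bigl(\bar\sigma_g(l)l^{-1}\bigr)f_k(g)$, which is $(\dc l)f_k$ as claimed. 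For (iii), the key observation is that \eqref{eq:MUgr} reads $X_k b=(k\cdot b)X_k$ for all $b\in\cC$; applied to $b=X_{k'}$ this gives $\beta(k,k')=\sigma_k(X_{k'})X_{k'}^{-1}$, which is both the stated ratio of $\tau$-values and equals $f_{k'}(k)$. Symmetry in the definition of $\beta$ then forces its values into $\FF^\times=\LL^{\Gal}$, since $\bar\sigma$ kills $K$. For (iv), I would multiply $\sigma_g(X_{k_1}X_{k_2})$ out two ways, using that $\sigma_g$ is an $\LL$-semilinear algebra automorphism and that $X_{k_1}X_{k_2}=\tau(k_1,k_2)X_{k_1k_2}$, and the coboundary $\dc\bigl(\tau(k_1,k_2)\bigr)$ emerges from the semilinear action of $\bar\sigma_g$ on the scalar $\tau(k_1,k_2)$.

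\textbf{Step (v) and the main obstacle.}
Assertion (v) is where the real content lies. That $f$ is a homomorphism into $\Hc^1(G,\LL^\times)$ is immediate from (iv), since the coboundary term dies in cohomology. The delicate part is identifying $\ker f$ with the support of the graded center $Z(\cD)$. Here I would argue that $[f_k]=1$ means $f_k=\dc l$ for some $l\in\LL^\times$, i.e. $\sigma_g(X_k)=\bar\sigma_g(l)l^{-1}X_k$ for all $g$; rescaling by $l^{-1}$, this says a suitable homogeneous element of degree $k$ is fixed by all $\sigma_g$, i.e. centralizes every $\cD_g$, and since such an element already lies in $\cC=\Cent_\cD(\cD_e)$ it centralizes all of $\cD$ and hence lies in $Z(\cD)$. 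Conversely a homogeneous central element of degree $k$ gives $f_k$ trivial. The care needed is to check that being $\sigma_g$-invariant for all $g\in G$ genuinely amounts to commuting with a full set of homogeneous generators $u_g$, which is where I expect the bookkeeping to be heaviest.

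\textbf{Step (vi).}
Finally, (vi) should follow formally from the machinery already assembled. The equivalence of (a) and (b) is Proposition \ref{pr:DCK}(iv) together with the fact that $\cD_e$ is central over $\LL$, so $\cD_K\simeq\cD_e\otimes_\LL\cC$ is central simple over $\FF$ iff $\cC$ is central simple over $\LL$; and one checks $Z(\cD)=\FF$ is equivalent to $\cD_K$ absorbing all of the center, reducing to the $K=1$ case by \eqref{eq:barsigma}. For the equivalence of (b) and (c), I would invoke the standard fact (cited earlier in the introduction, from \cite{BSZ,BZ}) that a twisted group algebra $\LL^\tau K$ with $K$ abelian is central simple over $\LL$ precisely when its alternating bicharacter $\beta$ is nondegenerate, with $Z(\cC)$ supported exactly on the radical of $\beta$. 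Combined with (v), which already pins down $Z(\cD)$ via $\ker f$, this closes the loop.
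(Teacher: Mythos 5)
Parts (i)--(v) of your proposal coincide with the paper's own proof: the cocycle identity from $\sigma_{g_1g_2}=\sigma_{g_1}\sigma_{g_2}$, the coboundary correction under $X_k\mapsto lX_k$, equation \eqref{eq:betaf} from $\sigma_k=(\Int X_k)\vert_\cC$, the two-way expansion of $\sigma_g(X_{k_1}X_{k_2})$ for (iv), and in (v) the rescaling of $X_k$ by $l^{-1}$ to identify $\ker f$ with the support of $Z(\cD)$ (your worry about the ``bookkeeping'' there is unfounded: $\cD_g=\cD_e u_g$, so commuting with $\cD_e$ and with $u_g$ gives commuting with $\cD_g$, exactly as you say). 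One flaw in (iii): your justification that $\beta$ takes values in $\FF^\times$ (``symmetry\ldots since $\bar\sigma$ kills $K$'') is a non sequitur --- triviality of the $K$-action only shows that $f_k\vert_K$ is a homomorphism into $\LL^\times$, not that its values are Galois-invariant. What is needed is invariance under $\bar\sigma_g$ for \emph{all} $g\in G$: applying $\sigma_g$ to $X_{k_1}X_{k_2}=\beta(k_1,k_2)X_{k_2}X_{k_1}$, the scalars $f_{k_1}(g)f_{k_2}(g)$ cancel on both sides, leaving $\bar\sigma_g\bigl(\beta(k_1,k_2)\bigr)=\beta(k_1,k_2)$, hence $\beta(k_1,k_2)\in\LL^G=\FF$. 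This is the paper's one-line computation, so the defect is minor and easily repaired.

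The genuine gap is in (vi), where you depart from the paper. First, the intermediate claim is misstated: $\cD_K\simeq\cD_e\otimes_\LL\cC$ has center $Z(\cC)\supseteq\LL$, so it is never central simple over $\FF$ unless $\LL=\FF$; you presumably mean central simple over $\LL$. More seriously, the passage between ``$\cC$ (equivalently $\cD_K$) central simple over $\LL$'' and ``$\cD$ central simple over $\FF$'' is exactly where the content lies, and ``reducing to the $K=1$ case by \eqref{eq:barsigma}'' is not an argument --- note that $\cD$ regraded by $G/K$ is \emph{not} a graded-division algebra (its $G/K$-homogeneous components contain non-invertible elements), so there is no instance of the proposition to reduce to. Concretely, two inputs are missing. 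For (a)$\Rightarrow$(b) one needs the Double Centralizer Theorem; this is Corollary \ref{co:DCK}, which the paper invokes. For (b)$\Rightarrow$(a), centrality of $\cD$ is easy ($Z(\cD)\subset Z(\cC)=\LL$, and $Z(\cD)\subset\LL^G=\FF$ by \eqref{eq:barsigma}), but \emph{simplicity} of $\cD$ is not automatic: graded-division algebras can be far from simple as ungraded algebras, a point the paper emphasizes. The paper closes this step by citing \cite[Lemma 4.2.2]{Allison_et_al} (a graded-simple algebra whose center is a field is simple); the alternative consistent with your route would be to prove or cite the classical generalized-crossed-product theorem that $\bigoplus_{\bar g\in G/K}\cD_K u_g$, with $\cD_K$ central simple over $\LL$ and $G/K\simeq\Gal(\LL/\FF)$ acting faithfully on $\LL$, is central simple over $\FF$. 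Your proposal contains neither, and your appeal to (v) only yields $Z(\cD)=\FF$, again not simplicity. Your treatment of (b)$\Leftrightarrow$(c), citing the known fact that $\LL^\tau K$ is central simple over $\LL$ precisely when $\beta$ is nondegenerate, is legitimate (the paper instead proves this in place, via $\beta(k,K)=1\Leftrightarrow X_k\in Z(\cC)$ and the same lemma of \cite{Allison_et_al}).
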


\begin{proof}
(i) For any $g_1,g_2\in G$ and $k\in K$, we have:
\[
f_k(g_1g_2)X_k=\sigma_{g_1g_2}(X_k)=\sigma_{g_1}\bigl(\sigma_{g_2}(X_k)\bigr)=\sigma_{g_1}\bigl(f_k(g_2)X_k\bigr)
=\bar\sigma_{g_1}\bigl(f_k(g_2)\bigr)f_k(g_1)X_k,
\]
so $f_k(g_1g_2)=f_k(g_1)\,\bar\sigma_{g_1}\bigl(f_k(g_2)\bigr)$, which means $f_k\in\Zc^1(G,\LL^\times)$.

\medskip

\noindent(ii) For any $g\in G$, we have:
\[
f_k'(g)X_k'=\sigma_g(X_k')=\sigma_g(l X_k)=\bar\sigma_g(l)f_k(g)X_k=\dc l(g)lf_k(g)X_k=\dc l(g)f_k(g)X_k',
\]
so $f_k'=(\dc l)f_k$.

\medskip

\noindent(iii) For any $k_1,k_2\in K$, we have $X_{k_1}X_{k_2}=\beta(k_1,k_2)X_{k_2}X_{k_1}$, 
which implies that $\beta$ does not depend on the choice of the elements $X_k$, $k\in K$, and is an alternating bicharacter. 
In view of \eqref{eq:sigma}, the fact that 
$(\Int X_g)X_k=\beta(g,k)X_k$, for all $g,k\in K$, implies \eqref{eq:betaf}. 
Since $\beta(\cdot,k)$ is the restriction of the $1$-cocycle $f_k$ to the subgroup $K$, it must take values in $\LL^G=\FF$
(see \eqref{eq:res} below). This is also easy to show directly:  
\[
\sigma_g(X_{k_1}X_{k_2})=f_{k_1}(g)f_{k_2}(g)X_{k_1}X_{k_2}=f_{k_1}(g)f_{k_2}(g)\beta(k_1,k_2)X_{k_2}X_{k_1}
\]
must be equal to 
\[
\sigma_g\bigl(\beta(k_1,k_2)X_{k_2}X_{k_1}\bigr)=\bar\sigma_g\bigl(\beta(k_1,k_2)\bigr) f_{k_2}(g)f_{k_1}(g)X_{k_2}X_{k_1},
\]
so $\beta(k_1,k_2)$ is fixed by $\bar\sigma_g$, for any $g\in G$.

\medskip

\noindent(iv) For any $g\in G$ and $k_1,k_2\in K$,
\[
\sigma_g(X_{k_1}X_{k_2})=\sigma_g\bigl(\tau(k_1,k_2)X_{k_1k_2}\bigr)
=\bar\sigma_g\bigl(\tau(k_1,k_2)\bigr)f_{k_1k_2}(g)X_{k_1k_2}
\]
must be equal to
\[
\sigma_g(X_{k_1})\sigma_g(X_{k_2})=\bigl(f_{k_1}(g)X_{k_1}\bigr)\bigl(f_{k_2}(g)X_{k_2}\bigr)
=f_{k_1}(g)f_{k_2}(g)\tau(k_1,k_2)X_{k_1k_2},
\]
so \eqref{eq:fk1k2} holds. 

\medskip

\noindent(v) Since \eqref{eq:fk1k2} implies $[f_{k_1k_2}]=[f_{k_1}][f_{k_2}]$, we see that the mapping \eqref{eq:KH1} 
is a group homomorphism. Since $Z(\cD)\subset Z(\cC)$, the support of $Z(\cD)$ is contained in $K$. For $k\in K$, we 
have $[f_k]=1$ if and only if there is an element $l\in\LL^\times$ such that $f_k=\dc l$. 
By part (ii), for the element $X_k'=l^{-1}X_k$, we have $f_k'=(\dc l)^{-1}f_k$, so the condition $f_k=\dc l$ is equivalent 
to $f_k'(g)=1$ for all $g\in G$, and this latter, in view of \eqref{eq:sigma} and \eqref{eq:fks}, 
is equivalent $X_k'\in Z(\cD)$. 

\medskip

\noindent(vi) Since $G$ is abelian, $Z(\cC)$ and $Z(\cD)$ are graded subalgebras. By \cite[Lemma 4.2.2]{Allison_et_al}, 
a $G$-graded-simple algebra is simple if and only if its center is a field. 
Hence, for $\cC$ and $\cD$, centrality implies simplicity. It is clear that $\beta(k,K)=1$ if and only if $X_k\in Z(\cC)$,
so (b) and (c) are equivalent. By Corollary \ref{co:DCK}, (a) implies (b). Conversely, suppose $Z(\cC)=\LL$. 
Since the elements of $Z(\cD)$ must centralize $\cD_e$, we have $Z(\cD)\subset Z(\cC)$, so $Z(\cD)\subset\LL$.
But now \eqref{eq:barsigma} implies that $Z(\cD)\subset\LL^G$, so $Z(\cD)=\FF$. 
\end{proof}

\begin{corollary}\label{co:Kbeta}
If $\cD$ is central simple over $\FF$, then $\FF$ contains the primitive roots of unity of degree $\exp(K)$, 
the exponent of the finite abelian group $K$, and $K$ is isomorphic to $A\times A$ for some finite abelian group $A$.
\end{corollary}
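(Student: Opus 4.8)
The plan is to translate both assertions into purely group-theoretic statements about the bicharacter $\beta$. By Proposition \ref{pr:Kbeta}(vi), central simplicity of $\cD$ over $\FF$ is equivalent to nondegeneracy of $\beta$, and by Proposition \ref{pr:Kbeta}(iii) this $\beta\colon K\times K\to\FF^\times$ is an alternating bicharacter with values in $\FF^\times$. So it suffices to prove: if a finite abelian group $K$ carries a nondegenerate alternating bicharacter into $\FF^\times$, then $\FF$ contains the primitive $\exp(K)$-th roots of unity and $K\cong A\times A$. Since $\beta(k,k')=1$ whenever $k,k'$ have coprime orders, $\beta$ respects the primary decomposition $K=\prod_p K_p$, and $\beta$ is nondegenerate if and only if each $\beta|_{K_p}$ is. Thus I would fix a prime $p$ and work with the $p$-group $K_p$, of exponent $p^a$, whose bicharacter takes values in the group $\mu_{p^a}(\FF)$ of $p^a$-th roots of unity in $\FF$, a cyclic group of order $p^b$ with $b\le a$.

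For the roots-of-unity statement I would show $b=a$. Choose $k\in K_p$ of order exactly $p^a$. Then $\beta(k,\cdot)$ is a character of $K_p$ valued in $\mu_{p^b}(\FF)$, so $\beta(k^{p^b},\cdot)=\beta(k,\cdot)^{p^b}=1$; nondegeneracy of $\beta|_{K_p}$ forces $k^{p^b}=e$, whence $p^a\mid p^b$ and therefore $b=a$. (In particular $\mu_p(\FF)\ne\{1\}$, so $\chr\FF\nmid\lvert K\rvert$.) Running over all primes $p\mid\lvert K\rvert$ and using $\exp(K)=\prod_p p^{a_p}$, I conclude that $\mu_{\exp(K)}(\FF)$ has order $\exp(K)$, i.e.\ $\FF$ contains the primitive $\exp(K)$-th roots of unity.

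For the structure of $K$ I would again reduce to $K_p$ and induct on $\lvert K_p\rvert$. Pick $k_1$ of maximal order $p^a$; the order computation above shows $\beta(k_1,\cdot)$ has order $p^a$, so its image is all of $\mu_{p^a}(\FF)$ and there is $k_2$ with $\beta(k_1,k_2)$ a primitive $p^a$-th root of unity, forcing $k_2$ to have order $p^a$ as well. Using that $\beta$ is alternating, one checks $\langle k_1\rangle\cap\langle k_2\rangle=\{e\}$, so $H\bydef\langle k_1,k_2\rangle\cong(\ZZ/p^a)\times(\ZZ/p^a)$ and $\beta|_H$ is nondegenerate. The key step is the orthogonal splitting $K_p=H\oplus H^\perp$, where $H^\perp=\{x\in K_p\mid\beta(x,H)=1\}$: since $\beta|_H$ is nondegenerate, the map $H\to\Hom(H,\FF^\times)$, $h\mapsto\beta(h,\cdot)|_H$, is an isomorphism, so every $k\in K_p$ agrees on $H$ with some $h\in H$, giving $kh^{-1}\in H^\perp$ and hence $K_p=H\cdot H^\perp$, while $H\cap H^\perp=\{e\}$ by nondegeneracy on $H$; nondegeneracy of $\beta$ on $K_p$ then descends to $H^\perp$. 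By induction $H^\perp\cong B\times B$, so $K_p\cong(\ZZ/p^a\times B)\times(\ZZ/p^a\times B)$, and assembling the primary components yields $K\cong A\times A$.

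The main obstacle I anticipate is this last orthogonal decomposition: unlike the vector-space case, $K_p$ is an arbitrary finite abelian $p$-group, so I must ensure both that the ``hyperbolic plane'' $H$ is a genuine direct factor isomorphic to $(\ZZ/p^a)^2$ (and not a proper quotient) and that $H^\perp$ is a group-theoretic complement, not merely a perpendicular subgroup. Both points are handled by exploiting nondegeneracy through the self-duality isomorphism $H\cong\Hom(H,\FF^\times)$ together with the alternating property; once these are secured the induction is routine.
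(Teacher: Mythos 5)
Your proof is correct, but it takes a genuinely different route from the paper's. For the roots-of-unity claim, the paper uses a global counting argument: nondegeneracy makes $\acute{\beta}:k\mapsto\beta(\cdot,k)$ a monomorphism $K\to\Hom(K,\FF^\times)$, so $\lvert\Hom(K,\FF^\times)\rvert\ge\lvert K\rvert$, while for any finite abelian group one always has $\lvert\Hom(K,\FF^\times)\rvert\le\lvert K\rvert$, with equality if and only if $\FF^\times$ contains a primitive root of unity of degree $\exp(K)$; this settles the claim in one stroke, with no reduction to primary components. For the claim $K\simeq A\times A$, the paper does not argue at all: it invokes the known existence of a symplectic basis for a finite abelian group equipped with a nondegenerate alternating bicharacter (cited as \cite[2,\S 2]{EKmon}), of which the direct-product decomposition is an immediate consequence. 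What you do instead is essentially reprove that theorem: your induction --- splitting off a hyperbolic plane $H=\langle k_1\rangle\times\langle k_2\rangle$ with $o(k_1)=o(k_2)$ equal to the exponent of $K_p$, and showing $K_p=H\oplus H^\perp$ via the self-duality $H\simeq\Hom(H,\FF^\times)$ --- is the standard proof of the cited result, carried out prime by prime. The trade-off is clear: the paper's argument is much shorter and leans on the literature (it also reuses the symplectic basis later, e.g.\ in \eqref{eq:symplectic_basis} and in the generators-and-relations description of Section \ref{se:SimpleGal}), whereas yours is self-contained and elementary, and it correctly isolates the two points where nondegeneracy is genuinely needed: that the hyperbolic plane is a direct factor of full exponent, and that $H^\perp$ is a true group-theoretic complement on which $\beta$ remains nondegenerate. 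Both your elementwise root-of-unity argument (including the observation that $\mathrm{char}\,\FF\nmid\lvert K\rvert$) and the orthogonal-splitting induction check out.
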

\begin{proof}
Since $\beta:K\times K\rightarrow \FF^\times$ in this case is nondegenerate, it induces a group monomorphism from $K$ 
to the group $\Hom(K,\FF^\times)$ of characters of $K$ with values in $\FF^\times$. 
In particular, $\lvert \Hom(K,\FF^\times)\rvert\ge\lvert K\rvert$. 
Since $K$ is a finite direct product of finite cyclic groups, we always have 
$\lvert\Hom(K,\FF^\times)\rvert\le\lvert K\rvert$, and the equality holds if and only if $\FF^\times$ contains 
a primitive root of unity of degree $\exp(K)$.

Moreover, since $\beta$ is alternating, $K$ admits a ``symplectic basis'' (see e.g. \cite[2,\S 2]{EKmon}), 
i.e., a generating set of the form  $a_1,b_1,\ldots,a_m,b_m$ with the order of both $a_i$ and $b_i$ equal to some 
$n_i\ge 2$, $i=1,\ldots,m$, such that 
\begin{equation}\label{eq:symplectic_basis}
K=\langle a_1\rangle\times\langle b_1\rangle\times\cdots\times\langle a_m\rangle\times\langle b_m\rangle,
\end{equation}
and $\beta(a_i,b_i)=\zeta_i$, with $\zeta_i$ a primitive root of unity of degree $n_i$, 
while $\beta(a_i,b_j)=1$ for $i\neq j$ and $\beta(a_i,a_j)=\beta(b_i,b_j)=1$ for all $i,j$.
In particular, $K$ is the direct product of two isomorphic subgroups:
$\langle a_1,\ldots,a_m\rangle$ and $\langle b_1,\ldots,b_m\rangle$. 
\end{proof}

To get more precise information, we recall the \emph{inflation-restriction exact sequence} 
(coming from the Lyndon-Hochschild-Serre spectral sequence \cite[IX, (10.6)]{MacLane}):
\begin{equation}\label{eq:inf_res}
\begin{split}
1&\longrightarrow \Hc^1\bigl(G/K,(\LL^\times)^K\bigr)\stackrel{\inf}\longrightarrow \Hc^1(G,\LL^\times)
\stackrel{\res}\longrightarrow \Hc^1(K,\LL^\times)^{G/K}\\
&\stackrel{\rho}\longrightarrow \Hc^2\bigl(G/K,(\LL^\times)^K\bigr)\stackrel{\inf}\longrightarrow \Hc^2(G,\LL^\times)
\end{split}
\end{equation}
Since $K$ acts trivially on $\LL^\times$, $\Hc^1\bigl(G/K,(\LL^\times)^K\bigr)=\Hc^1(G/K,\LL^\times)$, 
which is trivial by Hilbert's Theorem $90$. On the other hand, since $G$ is abelian, we have
\[
\Hc^1(K,\LL^\times)^{G/K}=\Hc^1(K,\LL^\times)^G=\Hc^1\bigl(K,(\LL^\times)^G\bigr)=\Hc^1(K,\FF^\times)=\Hom(K,\FF^\times),
\]
and hence the restriction map:
\begin{equation}\label{eq:res}
\begin{split}
\res: \Hc^1(G,\LL^\times)&\longrightarrow \Hom(K,\FF^\times)\\
   [\gamma]\quad &\mapsto\qquad \gamma\vert_K
\end{split}
\end{equation}
is a well-defined group monomorphism. 

\begin{corollary}\label{co:betares}
The homomorphisms $f$ in \eqref{eq:KH1} and $\res$ in \eqref{eq:res} fit in the commutative diagram
\begin{equation}\label{eq:betares}
\begin{tikzcd}
\Hc^1(G,\LL^\times)\arrow[r, "\res"]&\Hom(K,\FF^\times)\\
&K\arrow[ul, "f"]\arrow[u, "\acute{\beta}"']
\end{tikzcd}
\end{equation}
where $\acute{\beta}$ is induced by the bicharacter $\beta$ as follows:
\begin{equation}\label{eq:acutebeta}
\acute{\beta}: k\mapsto \beta(\cdot,k).
\end{equation}
If $\cD$ is central simple over $\FF$, then all homomorphisms in this diagram are isomorphisms, and $\sigma$ is an 
isomorphism from $G$ onto the group of automorphisms of $\cC$ as a $K$-graded algebra over $\FF$. 
\end{corollary}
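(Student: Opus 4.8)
The plan is to handle the two assertions separately. For the commutativity of \eqref{eq:betares}, I would simply unwind the definitions: for $k\in K$ we have $\res\bigl(f(k)\bigr)=\res\bigl([f_k]\bigr)=f_k\vert_K$, and by \eqref{eq:betaf} the restriction $f_k\vert_K$ sends $g\mapsto f_k(g)=\beta(g,k)$, which is exactly $\acute{\beta}(k)=\beta(\cdot,k)$. Hence $\res\circ f=\acute{\beta}$ and the triangle commutes; this uses nothing beyond the hypotheses of Proposition \ref{pr:DCK} and the identity \eqref{eq:betaf}.

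Now assume $\cD$ is central simple over $\FF$ and check that the three maps are isomorphisms. By Proposition \ref{pr:Kbeta}(vi) the bicharacter $\beta$ is nondegenerate, so $\acute{\beta}$ is injective; by Corollary \ref{co:Kbeta} the field $\FF$ contains a primitive root of unity of degree $\exp(K)$, whence $\lvert\Hom(K,\FF^\times)\rvert=\lvert K\rvert$, and an injective homomorphism between finite groups of equal order is an isomorphism, so $\acute{\beta}$ is an isomorphism. Since $\res$ is a monomorphism and $\res\circ f=\acute{\beta}$ is surjective, $\res$ is onto, hence an isomorphism, and then $f=\res^{-1}\circ\acute{\beta}$ is an isomorphism as well.

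The substantive part is the last claim about $\sigma$. First, each $\sigma_g$ already lands in the group $\Aut_\FF^{K\text{-gr}}(\cC)$ of automorphisms of $\cC=\LL^\tau K$ as a $K$-graded $\FF$-algebra, because \eqref{eq:fks} gives $\sigma_g(X_k)=f_k(g)X_k\in\cC_k$. For injectivity of $\sigma$, if $\sigma_g=\id_\cC$ then $\bar\sigma_g=\sigma_g\vert_\LL=\id$, so $g\in K$ by Proposition \ref{pr:DCK}(ii), and then $f_k(g)=\beta(g,k)=1$ for all $k\in K$ forces $g=e$ by nondegeneracy of $\beta$. To obtain surjectivity I would count $\lvert\Aut_\FF^{K\text{-gr}}(\cC)\rvert$ using the restriction-to-$\cC_e=\LL$ homomorphism $r:\Aut_\FF^{K\text{-gr}}(\cC)\to\Gal(\LL/\FF)$. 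This $r$ is surjective because $r(\sigma_g)=\bar\sigma_g$ and $\bar\sigma$ is surjective (Proposition \ref{pr:DCK}(ii)). An element of $\ker r$ fixes $\LL$ pointwise, hence is $\LL$-linear (as $\LL\subset Z(\cC)$) and graded, so it has the form $X_k\mapsto\mu_kX_k$; multiplicativity of such a map is equivalent to $\mu\colon K\to\LL^\times$ being a group homomorphism, giving $\ker r\cong\Hom(K,\LL^\times)$. Since $\FF$ already contains the $\exp(K)$-th roots of unity, every homomorphism $K\to\LL^\times$ takes values in $\FF^\times$, so $\Hom(K,\LL^\times)=\Hom(K,\FF^\times)$ has order $\lvert K\rvert$. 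Therefore $\lvert\Aut_\FF^{K\text{-gr}}(\cC)\rvert=\lvert K\rvert\cdot\lvert\Gal(\LL/\FF)\rvert=\lvert K\rvert\,[G:K]=\lvert G\rvert$, and the injective homomorphism $\sigma$ between finite groups of the same order is an isomorphism.

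The main obstacle I anticipate is the kernel computation for $r$: one must verify that an $\LL$-linear graded automorphism is determined by an arbitrary multiplicative $\mu\colon K\to\LL^\times$, and conversely that every such $\mu$ yields a genuine automorphism, before invoking Corollary \ref{co:Kbeta} to collapse $\Hom(K,\LL^\times)$ onto $\Hom(K,\FF^\times)$. Everything else reduces to bookkeeping with \eqref{eq:betaf} and the factorization $\lvert G\rvert=\lvert K\rvert\,[G:K]$.
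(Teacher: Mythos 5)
Your proposal is correct and follows essentially the same route as the paper: commutativity from \eqref{eq:betaf}, bijectivity of the three maps by counting with $\lvert\Hom(K,\FF^\times)\rvert\le\lvert K\rvert$ and the injectivity facts, injectivity of $\sigma$ via nondegeneracy of $\beta$, and identification of the $\LL$-linear graded automorphisms with $\Hom(K,\LL^\times)=\Hom(K,\FF^\times)$. The only cosmetic difference is in the last step, where you conclude by the order count $\lvert\ker r\rvert\cdot\lvert\Gal(\LL/\FF)\rvert=\lvert G\rvert$, whereas the paper factors an arbitrary graded automorphism as a semilinear $\sigma_g$ composed with an $\LL$-linear one lying in $\sigma(K)$ (via surjectivity of $\grave{\beta}$); these are equivalent bookkeeping.
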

\begin{proof}
The commutativity of the diagram is clear from \eqref{eq:betaf}, and $\res$ is injective. 
If $\cD$ is central simple, then $f$ is also injective, so 
$\lvert K\rvert\leq \lvert\Hc^1(G,\LL^\times)\rvert\leq\lvert\Hom(K,\FF^\times)\rvert$, but 
$\lvert\Hom(K,\FF^\times)\rvert\leq \lvert K\rvert$, so we must have $\lvert K\rvert = 
\lvert\Hc^1(G,\LL^\times)\rvert =\lvert\Hom(K,\FF^\times)\rvert$.

To prove the assertion about $\sigma$, suppose $\sigma_g=\id_\cC$ for some $g\in G$. Then $\bar{\sigma}_g=\id_\LL$, so
$g\in K$. But for $g\in K$, we have $\sigma_g(X_k)=\beta(g,k)X_k$ for all $k\in K$. 
Hence $\beta(g,K)=1$, so $g=e$ by the nondegeneracy of $\beta$. Now, every automorphism of $\cC$ as a graded algebra
over $\LL$ is given by a character: $X_k\mapsto\chi(k)X_k$ for some $\chi\in\Hom(K,\LL^\times)$. 
But $\FF$ contains a primitive root of unity of degree $\exp(K)$, so $\Hom(K,\LL^\times)=\Hom(K,\FF^\times)$. 
Since $\grave{\beta}:K\to\Hom(K,\FF^\times)$, sending $k\mapsto\beta(k,\cdot)$, is an isomorphism, we conclude that 
$\sigma$ maps $K$ onto the automorphism group of $\cC$ as a $K$-graded algebra over $\LL$. It remains to recall that
$\bar{\sigma}$ maps $G$ onto $\Gal(\LL/\FF)$, and all $\bar{\sigma}_g$-semilinear automorphisms are compositions of one 
of them with linear automorphisms. 
\end{proof}

\begin{remark}
Commutative diagram \eqref{eq:betares} shows more: the kernel of $f$ is equal to the radical of $\beta$ (i.e., 
the kernel of $\acute{\beta}$), which implies that $Z(\cC)\simeq\LL\otimes_\FF Z(\cD)$ as graded algebras over $\LL$.
\end{remark}


\section{Simple abelian Galois extensions}\label{se:SimpleGal}

Let $G$ be a finite abelian group. It follows from Theorems \ref{th:PP} and \ref{th:GTCD} that any $G$-Galois extension 
of $\FF$ is isomorphic to an algebra of the form 
$\Ind_T^G(\cC)$ where $\cC=\Cent_\cD(\cD_e)$ and $\cD=\bigoplus_{g\in G}\cD_g$ is a central simple 
$G$-graded-division algebra with support $T$.  

This section is devoted to studying the simple $G$-Galois extensions of $\FF$, i.e., the $G$-algebras of the form
\[
\cC=\Cent_\cD(\cD_e),
\]
where $\cD$ is a central simple graded-division algebra with support $T=G$. 
We continue using notation from the previous section.
In particular, $\LL/\FF$ is a Galois field extension and $\bar\sigma:G\rightarrow \Gal(\LL/\FF)$ is a surjective homomorphism with kernel $K$. The ground field $\FF$ contains the primitive roots of unity of order $\exp(K)$.

\subsection{A structure theorem}
First we take a closer look at equation \eqref{eq:fk1k2}. Consider the short exact sequence 
\begin{equation}\label{eq:shortBZH}
1\longrightarrow \Bc^1(G,\LL^\times)\longrightarrow \Zc^1(G,\LL^\times)\longrightarrow \Hc^1(G,\LL^\times)
\longrightarrow 1\,,
\end{equation}
of abelian groups, which we will temporarily denote by $\Bc^1$, $\Zc^1$ and $\Hc^1$ for brevity.
The group $\Bc^1=\{\dc l: g\mapsto \bar\sigma_g(l)l^{-1}\mid l\in\LL^\times\}$ lies in the short 
exact sequence
\[
1\longrightarrow \FF^\times\longrightarrow\LL^\times\stackrel{\dc}\longrightarrow 
\Bc^1\longrightarrow 1\,,
\]
which is isomorphic to the short exact sequence 
\[
1\longrightarrow \FF^\times\longrightarrow \LL^\times\stackrel{\pi}\longrightarrow \LL^\times/\FF^\times
\longrightarrow 1\,,
\]
where $\pi$ is the natural homomorphism, by means of the maps $\id_{\FF^\times}$, $\id_{\LL^\times}$, and
$\eta:\Bc^1\to\LL^\times/\FF^\times$ sending $\dc l\mapsto l\FF^\times$. 

Hence, \eqref{eq:shortBZH} induces the following long exact sequence:
\[
1\rightarrow\Hom(K,\LL^\times/\FF^\times)\rightarrow \Hom(K,\Zc^1)\rightarrow
\Hom(K,\Hc^1)\stackrel{\delta}\rightarrow \Ext(K,\LL^\times/\FF^\times)\rightarrow \cdots
\]
For any abelian groups $A$ and $B$, we may identify $\Ext(A,B)$ with the symmetric 
cohomology group $\Hc^2_{\textup{sym}}(A,B)\bydef\Zc^2_{\textup{sym}}(A,B)/\Bc^2(A,B)$, where 
$\Zc^2_{\textup{sym}}(A,B)$ is the subgroup of symmetric $2$-cocycles of $A$ with values in $B$ 
(with trivial action of $A$ on $B$). Under this identification, the connecting homomorphism $\delta$ 
above becomes \cite[III, Lemma 1.2 and Theorem 9.1]{MacLane} the homomorphism
\begin{equation}\label{eq:Delta}
\delta:\Hom(K,\Hc^1(G,\LL^\times))\longrightarrow \Hc^2_{\textup{sym}}(K,\LL^\times/\FF^\times)\,
\left(\leq \Hc^2(K,\LL^\times/\FF^\times)\right)
\end{equation}
that takes any homomorphism $f:K\rightarrow \Hc^1(G,\LL^\times)$, $k\mapsto [f_k]$, 
to the class of the (symmetric) $2$-cocycle $\eta\circ\gamma:K\times K\to\LL^\times/\FF^\times$
where the $2$-cocycle $\gamma:K\times K\to\Bc^1(G,\LL^\times)$ is defined by the equation
$f_{k_1}f_{k_2}=\gamma(k_1,k_2)f_{k_1k_2}$.

Then \eqref{eq:fk1k2} tells us that $\gamma(k_1,k_2)=\dc\bigl(\tau(k_1,k_2)\bigr)$, so 
\begin{equation}\label{eq:etatau}
(\eta\circ\gamma)(k_1,k_2)=\tau(k_1,k_2)\FF^\times
\end{equation}
by definition of the isomorphism $\eta:\Bc^1(G,\LL^\times)\to\LL^\times/\FF^\times$. 
Therefore, the homomorphism $f:K\rightarrow \Hc^1(G,\LL^\times)$ 
and the class $[\tau]\in\Hc^2(K,\LL^\times)$ are related as follows:
$\delta(f)=\pi_*(\bigl([\tau]\bigr)$,
where $\pi_*:[\tau]\mapsto[\pi\circ\tau]$ is the homomorphism 
\begin{equation}\label{eq:pi*}
\pi_*:\Hc^2(K,\LL^\times)\longrightarrow \Hc^2(K,\LL^\times/\FF^\times)
\end{equation}
induced by the natural homomorphism $\pi:\LL^\times\to\LL^\times/\FF^\times$. 
We have proved one direction of the following result:

\begin{lemma}\label{lm:pi*Delta}
For a group homomorphism $f:K\rightarrow \Hc^1(G,\LL^\times)$ and a $2$-cocycle 
$\tau\in\Zc^2(K,\LL^\times)$, the equation 
\begin{equation}\label{eq:tauf}
\delta(f)=\pi_*\bigl([\tau]\bigr)
\end{equation}
holds if and only if there are 
$1$-cocycles $f_k\in\Zc^1(G,\LL^\times)$, for all $k\in K$, such that $f(k)=[f_k]$ and \eqref{eq:fk1k2} holds:
$f_{k_1}f_{k_2}=\dc\bigl(\tau(k_1,k_2)\bigr)f_{k_1k_2}$.
\end{lemma}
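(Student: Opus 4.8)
The implication from the existence of cocycles $f_k$ satisfying \eqref{eq:fk1k2} to the equality \eqref{eq:tauf} is precisely the computation carried out just above the statement, so the plan is to prove the converse by reversing the construction of the connecting homomorphism $\delta$ in a concrete way. First I would choose, for each $k\in K$, an arbitrary representative $\wt{f}_k\in\Zc^1(G,\LL^\times)$ of the class $f(k)\in\Hc^1(G,\LL^\times)$; this is possible since $\Zc^1\to\Hc^1$ is surjective. Because $f$ is a homomorphism and $\Zc^1$ is abelian, the factor set $\gamma(k_1,k_2)\bydef\wt{f}_{k_1}\wt{f}_{k_2}\wt{f}_{k_1k_2}^{-1}$ takes values in $\Bc^1(G,\LL^\times)$ and is a symmetric $2$-cocycle $K\times K\to\Bc^1$; by the description of $\delta$ accompanying \eqref{eq:Delta}, one has $\delta(f)=[\eta\circ\gamma]$ for this (or any) choice of lifts.

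Next I would unpack the hypothesis $\delta(f)=\pi_*([\tau])$ as an equality of classes in $\Hc^2(K,\LL^\times/\FF^\times)$ (into which $\Hc^2_{\textup{sym}}$ injects). It says that the $2$-cocycles $\eta\circ\gamma$ and $\pi\circ\tau$ are cohomologous, so there is a $1$-cochain $\mu:K\to\LL^\times/\FF^\times$ with
\[
(\eta\circ\gamma)(k_1,k_2)=(\pi\circ\tau)(k_1,k_2)\,\mu(k_1)\mu(k_2)\mu(k_1k_2)^{-1}
\]
for all $k_1,k_2\in K$. Fixing preimages $m_k\in\LL^\times$ with $\pi(m_k)=\mu(k)$, I would then replace the chosen representatives by $f_k\bydef\wt{f}_k\,\dc(m_k)^{-1}$. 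Each $f_k$ differs from $\wt{f}_k$ by the coboundary $\dc(m_k)^{-1}\in\Bc^1(G,\LL^\times)$, so $[f_k]=f(k)$, as required in the statement.

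It then remains to verify \eqref{eq:fk1k2} for these $f_k$. Computing the new factor set $\gamma'(k_1,k_2)\bydef f_{k_1}f_{k_2}f_{k_1k_2}^{-1}$ and using that $\Zc^1$ is abelian gives $\gamma'=\gamma\cdot\dc(m_{k_1}^{-1}m_{k_2}^{-1}m_{k_1k_2})$; applying $\eta$ (which sends $\dc l\mapsto\pi(l)=l\FF^\times$) together with the displayed relation then cancels all the $\mu$-terms and yields $\eta(\gamma'(k_1,k_2))=\pi(\tau(k_1,k_2))$. Since $\eta$ is an isomorphism whose inverse sends $\pi(\tau(k_1,k_2))$ to $\dc(\tau(k_1,k_2))$, equivalently since $\ker\dc=\FF^\times$, this forces $\gamma'(k_1,k_2)=\dc(\tau(k_1,k_2))$, which is exactly \eqref{eq:fk1k2}. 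I expect the only delicate point to be this last bookkeeping with the identification $\eta:\Bc^1(G,\LL^\times)\to\LL^\times/\FF^\times$ and with the fact that $\dc$ has kernel exactly $\FF^\times$: this is what allows one to promote an equality of classes in $\LL^\times/\FF^\times$ to an honest equality of coboundaries in $\Bc^1$. Everything else is a routine diagram chase in the long exact sequence obtained from \eqref{eq:shortBZH}.
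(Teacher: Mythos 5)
Your proposal is correct and follows essentially the same route as the paper's proof: choose arbitrary cocycle representatives of the classes $f(k)$, use the hypothesis $\delta(f)=\pi_*([\tau])$ to produce elements of $\LL^\times$ whose coboundaries correct the factor set, adjust the representatives accordingly, and use that $\eta:\Bc^1(G,\LL^\times)\to\LL^\times/\FF^\times$ is an isomorphism (equivalently, $\ker\dc=\FF^\times$) to upgrade the resulting congruence modulo $\FF^\times$ to the exact identity \eqref{eq:fk1k2}. The only difference from the paper is notational (your $m_k$ are the inverses of the paper's $l_k$, reflecting which side of the cohomologous relation carries the coboundary).
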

\begin{proof}
For the remaining direction, suppose that \eqref{eq:tauf} holds and pick, for each $k\in K$, some element 
$f_k\in\Zc^1(G,\LL^\times)$ such that $f(k)=[f_k]$ . 
As above, $\delta(f)=[\eta\circ\gamma]$ where $\gamma$ is defined by 
the equation $f_{k_1}f_{k_2}=\gamma(k_1,k_2)f_{k_1k_2}$. 
Hence, \eqref{eq:tauf} means that \eqref{eq:etatau} holds up to a coboundary in $\Bc^2(K,\LL^\times/\FF^\times)$, 
i.e., there exist elements $l_k\in\LL^\times$, for all $k\in K$, such that 
\[
(l_{k_1}l_{k_2}l_{k_1k_2}^{-1}\FF^\times)(\eta\circ\gamma)(k_1,k_2)=\tau(k_1,k_2)\FF^\times.
\]
Applying $\eta^{-1}$ to both sides and plugging in the definition of $\gamma$, we obtain:
\[
\dc(l_{k_1}l_{k_2}l_{k_1k_2}^{-1})f_{k_1}f_{k_2}f_{k_1k_2}^{-1}=\dc\bigl(\tau(k_1,k_2)\bigr).
\]
Hence, with $f_k'\bydef (\dc l_k)f_k$, we have $f(k)=[f_k']$ and 
$f_{k_1}'f_{k_2}'=\dc\bigl(\tau(k_1,k_2)\bigr)f_{k_1k_2}'$, as required.
\end{proof}

\begin{lemma}\label{lm:tau_exists}
Assume that the map $\res$ in \eqref{eq:res} is bijective. Then,
for any alternating bicharacter $\beta:K\times K\to\LL^\times$, 
there exists a $2$-cocycle $\tau\in\Zc^2(K,\LL^\times)$ such that \eqref{eq:tauf} holds 
for $f\bydef\res^{-1}\circ\acute{\beta}$ and $\beta(k_1,k_2)=\tau(k_1,k_2)\tau(k_2,k_1)^{-1}$ for all $k_1,k_2\in K$. 
Moreover, for any $1$-cocycles $f_k\in\Zc^1(G,\LL^\times)$ with $f(k)=[f_k]$ for all $k\in K$, the $2$-cocycle 
$\tau$ can be chosen to satisfy \eqref{eq:fk1k2}.
\end{lemma}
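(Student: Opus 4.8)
The plan is to build $\tau$ as a product of two $2$-cocycles on $K$ with values in $\LL^\times$: a bicharacter that accounts for the commutator condition, and a symmetric cocycle that accounts for equation \eqref{eq:tauf}. First I would check that $f$ is well defined. Since $\FF$ contains the primitive roots of unity of order $\exp(K)$, we have $\Hom(K,\LL^\times)=\Hom(K,\FF^\times)$, so $\beta$ automatically takes values in $\FF^\times$, the map $\acute{\beta}$ of \eqref{eq:acutebeta} lands in $\Hom(K,\FF^\times)$, and the bijectivity of $\res$ makes $f=\res^{-1}\circ\acute{\beta}$ a well-defined homomorphism $K\to\Hc^1(G,\LL^\times)$. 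I would also record that the class $\delta(f)$ of \eqref{eq:Delta} is \emph{symmetric}: by construction it is represented by $\eta\circ\gamma$ with $\gamma(k_1,k_2)=f_{k_1}f_{k_2}f_{k_1k_2}^{-1}$, and because the group $\Zc^1(G,\LL^\times)$ is abelian, $\gamma$ is unchanged under swapping $k_1$ and $k_2$.

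The crux is to show that $\delta(f)$ lies in the image of $\pi_*$ from \eqref{eq:pi*}. Here I would pass to symmetric cohomology. Using the identification $\Hc^2_{\textup{sym}}(K,-)\simeq\Ext(K,-)$ \cite[III]{MacLane} together with the fact that the finite abelian group $K$ admits a free resolution of length one over $\ZZ$, so that $\Ext^2(K,-)=0$, the short exact sequence $1\to\FF^\times\to\LL^\times\to\LL^\times/\FF^\times\to1$ (with $\pi$ the projection) induces a long exact $\Ext$-sequence that terminates, making $\pi_*:\Hc^2_{\textup{sym}}(K,\LL^\times)\to\Hc^2_{\textup{sym}}(K,\LL^\times/\FF^\times)$ surjective. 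Since $\delta(f)$ is symmetric, I can therefore choose a symmetric cocycle $\omega\in\Zc^2_{\textup{sym}}(K,\LL^\times)$ with $\pi_*([\omega])=\delta(f)$. This is the step I expect to be the main obstacle: a naive attempt to define $\tau(k_1,k_2)\in\LL^\times$ directly by $\dc(\tau(k_1,k_2))=f_{k_1}f_{k_2}f_{k_1k_2}^{-1}$ only produces a cochain that is a cocycle modulo $\FF^\times$, with a genuine obstruction living in $\Hc^3(K,\FF^\times)$; restricting to symmetric cocycles is exactly what kills this obstruction, since $\Ext^2$ over $\ZZ$ vanishes.

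For the commutator, I would fix a decomposition $K=\langle e_1\rangle\times\cdots\times\langle e_r\rangle$ into cyclic groups and define a bicharacter $\mu:K\times K\to\FF^\times$ by $\mu(e_i,e_j)=\beta(e_i,e_j)$ for $i<j$ and $\mu(e_i,e_j)=1$ for $i\geq j$; these values are admissible because $\beta(e_i,e_j)$ is a root of unity of order dividing $\gcd(\lvert e_i\rvert,\lvert e_j\rvert)$, and a short computation on generators shows (using that $\beta$ is alternating) that $\mu(k_1,k_2)\mu(k_2,k_1)^{-1}=\beta(k_1,k_2)$ for all $k_1,k_2$. As $\mu$ is a bicharacter it is a $2$-cocycle, and being $\FF^\times$-valued it has $\pi_*([\mu])$ trivial. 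Setting $\tau\bydef\mu\omega$ then settles the existence part: the commutator of $\tau$ is the product of those of $\mu$ (namely $\beta$) and of the symmetric $\omega$ (trivial), hence equals $\beta$; and $\pi_*([\tau])=\pi_*([\mu])\,\pi_*([\omega])=\delta(f)$, which is \eqref{eq:tauf}.

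Finally, for the ``moreover'' clause I would invoke Lemma \ref{lm:pi*Delta}: since \eqref{eq:tauf} holds for $\tau$, there exist $1$-cocycles $f_k^{0}$ with $f(k)=[f_k^{0}]$ satisfying \eqref{eq:fk1k2}. Given arbitrary prescribed $f_k$ with $f(k)=[f_k]$, I would write $f_k=(\dc l_k)f_k^{0}$ with $l_k\in\LL^\times$ and replace $\tau$ by the cohomologous cocycle $\tau'(k_1,k_2)\bydef l_{k_1}l_{k_2}l_{k_1k_2}^{-1}\tau(k_1,k_2)$. Since $\dc$ is a homomorphism with kernel $\FF^\times$, a direct substitution shows that $\tau'$ satisfies \eqref{eq:fk1k2} for the given $f_k$; and because $\tau'$ differs from $\tau$ by the $2$-coboundary $(k_1,k_2)\mapsto l_{k_1}l_{k_2}l_{k_1k_2}^{-1}$, it has the same class, so \eqref{eq:tauf} and the commutator $\beta$ (which depends only on $[\tau]$ by Proposition \ref{pr:Kbeta}(iii)) are preserved.
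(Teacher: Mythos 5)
Your proof is correct and follows essentially the same route as the paper's: both handle the commutator condition with a bicharacter built on a cyclic decomposition of $K$, both obtain surjectivity of $\pi_*$ on symmetric cohomology from the identification $\Hc^2_{\textup{sym}}\simeq\Ext$ and the vanishing of higher $\Ext$ for abelian groups, and both prove the ``moreover'' clause by absorbing the coboundary built from the $l_k$ into $\tau$. The only cosmetic difference is that you choose the bicharacter $\mu$ to be $\FF^\times$-valued so that $\pi_*([\mu])$ is trivial on the nose, whereas the paper corrects an arbitrary $\tau_0$ with $\textup{alt}(\tau_0)=\beta$ by a symmetric cocycle representing $\delta(f)\pi_*([\tau_0])^{-1}$ (which requires the extra observation that this class is symmetric).
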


\begin{proof}
Denote by $\textup{alt}$ the map sending a $2$-cocycle $\tau$ to the alternating bicharacter 
$(k_1,k_2)\mapsto\tau(k_1,k_2)\tau(k_2,k_1)^{-1}$. As already mentioned, the result depends only on the class 
$[\tau]$, so we obtain a homomorphism $\Hc^2(K,\LL^\times)\to\Hom(K\wedge K,\LL^\times)$,
which we also denote by $\textup{alt}$. It is well known that the sequence
\[
1\longrightarrow \Hc^2_{\textup{sym}}(K,\LL^\times)\longrightarrow \Hc^2(K,\LL^\times)
\stackrel{\textup{alt}}{\longrightarrow}\Hom(K\wedge K,\LL^\times)\longrightarrow 1
\]
is exact. Here is a proof for completeness: if we write $K$ as a direct product of cyclic subgroups generated by 
$a_1,\ldots,a_m$ then, for any alternating bicharacter $\beta:K\times K\to\LL^\times$, we can define 
a bicharacter $\tau:K\times K\to\LL^\times$ by
\[
\tau(a_i,a_j)=\begin{cases}
\beta(a_i,a_j)&\text{if }i<j;\\
1&\text{if }i\ge j.
\end{cases}
\]
Since the action is trivial, any bicharacter is a $2$-cocycle, and clearly $\textup{alt}(\tau)=\beta$. 
Thus we obtain a homomorphism $\Hom(K\wedge K,\LL^\times)\to\Hc^2(K,\LL^\times)$
that is a right inverse of $\textup{alt}$. (In fact, we can put any abelian group $B$ with trivial action of 
$K$ in place of $\LL^\times$, and the splitting is even natural in $B$.) 

Now, given $\beta$, pick a $2$-cocycle $\tau_0\in\Zc^2(K,\LL^\times)$ such that $\textup{alt}(\tau_0)=\beta$, and 
consider a $2$-cocycle $\gamma\in\Zc^2(K,\LL^\times/\FF^\times)$ with $[\gamma]=\delta(f)\pi_*([\tau_0])^{-1}$ in 
$\Hc^2(K,\LL^\times/\FF^\times)$.
Since $\FF^\times$ contains a primitive root of unity of degree $\exp(K)$, $\beta$ takes values in $\FF^\times$, 
so $\gamma$ is a symmetric $2$-cocycle. But $\Hc^2_{\textup{sym}}$ can be interpreted as $\Ext$, and there is no 
higher $\Ext$ for abelian groups, so the following sequence is exact:
\begin{equation}\label{eq:exact_FL}
1\to\Hom(K,\LL^\times/\FF^\times)\to\Hc^2_{\textup{sym}}(K,\FF^\times)\to\Hc^2_{\textup{sym}}(K,\LL^\times)
\stackrel{\pi_*}{\rightarrow} \Hc^2_{\textup{sym}}(K,\LL^\times/\FF^\times)\to 1,
\end{equation}
where we have used the fact that $\Hom(K,\FF^\times)\to\Hom(K,\LL^\times)$ is an isomorphism.
Therefore, we can find a 2-cocycle $\alpha\in\Zc^2_{\textup{sym}}(K,\LL^\times)$ such that $\pi_*([\alpha])=[\gamma]$.
Then $\tau\bydef\tau_0\alpha$ satisfies \eqref{eq:tauf} and $\textup{alt}(\tau)=\beta$, as required.

Finally, given $1$-cocycles $f_k$ with $f(k)=[f_k]$, we can find elements $l_k\in\LL^\times$ as in the proof of 
Lemma \ref{lm:pi*Delta} and use them to modify $\tau$ rather than $f_k$: 
the $2$-cocycle $\tau'(k_1,k_2)\bydef\tau(k_1,k_2)l_{k_1}^{-1}l_{k_2}^{-1}l_{k_1k_2}$ 
will satisfy $f_{k_1}f_{k_2}=\dc\bigl(\tau'(k_1,k_2)\bigr)f_{k_1k_2}$, for all $k_1,k_2\in K$, 
and $\textup{alt}(\tau')=\beta$.
\end{proof}

\begin{remark}
The exact sequence \eqref{eq:exact_FL} also shows that, for a given $\beta$, not all $2$-cocycles 
$\tau\in\Zc^2(K,\LL^\times)$ with $\textup{alt}(\tau)=\beta$ will satisfy \eqref{eq:tauf}, 
unless $\Hc^2_{\textup{sym}}(K,\LL^\times/\FF^\times)$ happens to be trivial. 
The classes of $2$-cocycles satisfying \eqref{eq:tauf} form a coset of the image 
of $\Hc^2_{\textup{sym}}(K,\FF^\times)$ in $\Hc^2(K,\LL^\times)$.
\end{remark}

\begin{theorem}\label{th:main}
Let $G$ be a finite abelian group and let $\FF$ be a field. 

\begin{enumerate}

\item Any simple $G$-Galois extension of $\FF$ is isomorphic (as a $G$-algebra) to a twisted group algebra 
$\LL^\tau K$ for a subgroup $K$ of $G$, a Galois field extension $\LL/\FF$ with $\Gal(\LL/\FF)$ isomorphic to $G/K$, 
and a $2$-cocycle $\tau\in\Zc^2(K,\LL^\times)$ such that the following conditions hold:
\begin{enumerate}
\item[(i)] With respect to the $G$-action $\bar\sigma:G\to\Gal(\LL/\FF)$ given by 
the identification $G/K\simeq \Gal(\LL/\FF)$, we have $\lvert \Hc^1(G,\LL^\times)\rvert=\lvert K\rvert$ 
or, equivalently, $\FF$ contains a primitive root of unity of degree $\exp(K)$ 
and the monomorphism $\res$ in \eqref{eq:res} is surjective (hence bijective);

\item[(ii)] The alternating bicharacter $\beta(k_1,k_2)\bydef\tau(k_1,k_2)\tau(k_2,k_1)^{-1}$, whose values are 
automatically in $\FF^\times$ by the previous condition, is nondegenerate 
(so $\acute{\beta}$ in \eqref{eq:acutebeta} is bijective);

\item[(iii)] The isomorphism $f\bydef\res^{-1}\circ\acute{\beta}$ satisfies $\delta(f)=\pi_*\bigl([\tau]\bigr)$,
where $\delta$ and $\pi_*$ are as in \eqref{eq:Delta} and \eqref{eq:pi*}. 
\end{enumerate}
The $G$-action on $\LL^\tau K$ is given by $\sigma:G\to\Aut_\FF(\LL^\tau K)$ as follows:
\begin{equation}\label{eq:sigma_action}
\sigma_g(lX_k)=\bar\sigma_g(l)f_k(g)X_k\quad\forall g\in G,\,k\in K,\,l\in\LL,
\end{equation}
where $f_k\in\Zc^1(G,\LL^\times)$ is a representative of $f(k)\in\Hc^1(G,\LL^\times)$.

\item Conversely, given $K$, $\LL$ and $\tau$ satisfying the above conditions, the twisted group algebra $\LL^\tau K$
becomes a simple $G$-Galois extension of $\FF$ if we define the $G$-action by \eqref{eq:sigma_action}
where the representatives $f_k$ are chosen (by Lemma \ref{lm:pi*Delta}) so that 
$f_{k_1}f_{k_2}=\dc\bigl(\tau(k_1,k_2)\bigr)f_{k_1k_2}$ for all $k_1,k_2\in K$. 
\end{enumerate}

\end{theorem}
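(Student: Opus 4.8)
The plan for part~(1) is to assemble the structure theory of Section~\ref{se:div_to_Gal}. By the discussion opening the present section, every simple $G$-Galois extension is isomorphic as a $G$-algebra to $\cC=\Cent_\cD(\cD_e)$ for some central simple $G$-graded-division algebra $\cD$ with support $G$; thus $\LL=Z(\cD_e)$, the homomorphism $\bar\sigma\colon G\to\Gal(\LL/\FF)$ is onto with kernel $K$, and $\cC\simeq\LL^\tau K$ as graded $\LL$-algebras by Proposition~\ref{pr:DCK}(iii). Condition~(i) is exactly Corollaries~\ref{co:Kbeta} and~\ref{co:betares}: centrality of $\cD$ forces $\FF$ to contain a primitive $\exp(K)$-th root of unity and makes $\res$ bijective, so $\lvert\Hc^1(G,\LL^\times)\rvert=\lvert K\rvert$. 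Condition~(ii) is Proposition~\ref{pr:Kbeta}(vi). For condition~(iii), equation~\eqref{eq:fk1k2} says the cocycles $f_k$ of~\eqref{eq:fks} satisfy $f_{k_1}f_{k_2}=\dc(\tau(k_1,k_2))f_{k_1k_2}$, which by Lemma~\ref{lm:pi*Delta} is equivalent to $\delta(f)=\pi_*([\tau])$, while Corollary~\ref{co:betares} identifies $f$ with $\res^{-1}\circ\acute\beta$. Finally, the action formula~\eqref{eq:sigma_action} just combines the $\bar\sigma_g$-semilinearity of $\sigma_g$ recorded after Corollary~\ref{co:DCK} with the definition~\eqref{eq:fks} of $f_k$.

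For the converse, part~(2), I would work directly with $\cC=\LL^\tau K$ (taking $\tau$ normalized so that $X_e=1$), rather than reconstructing a graded-division algebra. Using condition~(iii) and Lemma~\ref{lm:pi*Delta}, fix representatives $f_k\in\Zc^1(G,\LL^\times)$ of $f(k)$ obeying~\eqref{eq:fk1k2}, and define $\sigma$ by~\eqref{eq:sigma_action}. That $\sigma$ is a $G$-action by $\FF$-algebra automorphisms is a routine cocycle check: $\sigma_{g_1g_2}=\sigma_{g_1}\sigma_{g_2}$ because each $f_k$ is a $1$-cocycle, while multiplicativity $\sigma_g(X_{k_1}X_{k_2})=\sigma_g(X_{k_1})\sigma_g(X_{k_2})$ is precisely~\eqref{eq:fk1k2}. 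Faithfulness and $\cC^G=\FF 1$ then follow exactly as in Corollary~\ref{co:betares}: if $\sigma_g$ fixes $\LL=\cC_e$ then $g\in K$, whence $\sigma_g(X_k)=\beta(g,k)X_k$ by~\eqref{eq:betaf} and $g=e$ by nondegeneracy; and a fixed $\sum_k l_kX_k$ forces $f_k=\dc(l_k^{-1})$ whenever $l_k\neq0$, i.e.\ $f(k)=1$ and hence $k=e$ since $f$ is injective, leaving $l_e\in\LL^G=\FF$. Condition~(ii), via the standard theory of twisted group algebras over a field containing $\mu_{\exp(K)}$ (available by~(i)), shows that $\cC=\LL^\tau K$ is central simple over $\LL$, in particular simple.

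It remains to verify condition~(a) of Definition~\ref{df:Galois}, that $\Phi\colon\cC\#\FF G\to\End_\FF(\cC)$ is an isomorphism, and this is the crux. Since $\dim_\FF(\cC\#\FF G)=\lvert G\rvert\dim_\FF\cC=\lvert G\rvert^2=(\dim_\FF\cC)^2=\dim_\FF\End_\FF(\cC)$, it suffices to prove $\Phi$ surjective. The key point is that for $k\in K$ one has $\sigma_k=(\Int X_k)\vert_\cC$, because $f_{k'}(k)=\beta(k,k')$ by~\eqref{eq:betaf}; writing $\Phi(c)=L_c$ and $\Phi(g)=\sigma_g$, the subalgebra $\im\Phi$ therefore contains $\sigma_k^{-1}\circ L_{X_k}=R_{X_k}$ for every $k\in K$, hence all of $R_\cC$ (as $R_{lX_k}=R_{X_k}L_l$ and the $lX_k$ span $\cC$). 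Because $\cC$ is central simple over $\LL$, the left and right multiplications generate $\End_\LL(\cC)$ through the isomorphism $\cC\otimes_\LL\cC^{\op}\stackrel{\sim}{\longrightarrow}\End_\LL(\cC)$, so $\im\Phi\supseteq\End_\LL(\cC)$. Finally, surjectivity of $\bar\sigma$ provides, for each $\gamma\in\Gal(\LL/\FF)$, an invertible $\gamma$-semilinear operator $\sigma_g\in\im\Phi$; composing it with $\End_\LL(\cC)$ produces every $\gamma$-semilinear endomorphism, and since $\End_\FF(\cC)$ is the direct sum over $\gamma\in\Gal(\LL/\FF)$ of the spaces of $\gamma$-semilinear maps, we conclude $\im\Phi=\End_\FF(\cC)$. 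The main obstacle is precisely this surjectivity step --- recognizing that the inner action of $K$ recovers the right multiplications and that, together with the Galois twists supplied by $\bar\sigma$, these exhaust $\End_\FF(\cC)$; everything else reduces to the cocycle bookkeeping already prepared in Section~\ref{se:SimpleGal}.
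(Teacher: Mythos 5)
Your argument is correct, and everything except the last step coincides with the paper's proof: part~(1) is, as in the paper, just an assembly of Proposition~\ref{pr:Kbeta}, its corollaries and Lemma~\ref{lm:pi*Delta}; and in part~(2) the verification that \eqref{eq:sigma_action} defines a $G$-action by automorphisms, the computation of $\cC^G$, faithfulness, and simplicity of $\LL^\tau K$ are the same computations the paper performs (the paper cites \cite[Lemma 4.2.2]{Allison_et_al} for simplicity where you invoke the standard theory of twisted group algebras; the normalization $X_e=1$ you impose is harmless but should be flagged as a replacement of $\tau$ by a cohomologous cocycle, with the $f_k$ adjusted as in Lemma~\ref{lm:pi*Delta}). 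Where you genuinely diverge is the crux, the bijectivity of $\Phi:\cC\#\FF G\to\End_\FF(\cC)$: both you and the paper use the dimension count $\dim_\FF(\cC\#\FF G)=\lvert G\rvert^2=\dim_\FF\End_\FF(\cC)$, but the paper proves \emph{injectivity}, restricting a kernel element to a homogeneous component of the $K$-grading, factoring out the invertible $X_k$, and applying Artin--Dedekind independence to the characters $\chi_g:a\mapsto\sigma_g(a)a^{-1}$ of the group of nonzero homogeneous elements; you instead prove \emph{surjectivity} by a crossed-product-style structural argument: the identity $\sigma_k=(\Int X_k)\vert_\cC$ for $k\in K$ (valid because $f_{k'}\vert_K=\beta(\cdot,k')$ holds on the nose, coboundaries being trivial on $K$) yields $R_\cC\subset\im\Phi$, central simplicity of $\cC$ over $\LL$ then gives $\End_\LL(\cC)=L_\cC R_\cC\subset\im\Phi$, and the invertible semilinear operators $\sigma_g$ sweep out the decomposition of $\End_\FF(\cC)$ into $\gamma$-semilinear blocks, $\gamma\in\Gal(\LL/\FF)$. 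Each route has its merits: yours makes structurally visible \emph{why} the Galois condition holds (the inner action of $K$ recovers right multiplications, the quotient $G/K$ supplies the Galois twists), and it is the natural generalization of the classical proof that crossed products are central simple; the paper's is shorter and more self-contained, needing only the one character-independence lemma. Note, though, that your route does not actually avoid that ingredient: the directness of the semilinear decomposition of $\End_\FF(\cC)$, which you cite as standard, is itself a form of Dedekind's independence of characters, so the two proofs ultimately rest on the same tool packaged differently.
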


\begin{proof}
(1) This has already been proved: see Proposition \ref{pr:Kbeta}, its corollaries, and Lemma \ref{lm:pi*Delta}.

(2) Let $\cA=\LL^\tau K$, with $K$, $\LL$, and $\tau$ satisfying conditions (i), (ii), and (iii).
The nondegeneracy of $\beta$ shows that $Z(\cA)=\LL$. Now, $\cA$ is naturally $K$-graded, 
and as such it is a graded-division algebra, so we can apply \cite[Lemma 4.2.2]{Allison_et_al}, as in the proof of
part (vi) of Proposition \ref{pr:Kbeta}, to conclude that $\cA$ is simple.

For any $k\in K$, we have $f_k\in \Zc^1(G,\LL^\times)$ and hence the computation in the proof of part (i) of 
Proposition \ref{pr:Kbeta} shows that $\sigma_{g_1g_2}=\sigma_{g_1}\sigma_{g_2}$. 
Also, since $f_{k_1}f_{k_2}=\dc\bigl(\tau(k_1,k_2)\bigr)f_{k_1k_2}$ for all $k_1,k_2\in K$, the computation 
in the proof of part (iv) of that result shows that $\sigma_g$ is an automorphism of $\cA$ for any $g\in G$.
Thus, we have a well-defined $G$-action on $\cA$. Note that if $g\in K$ then the automorphism $\sigma_g$ is $\LL$-linear
and $\sigma_g(X_k)=\beta(g,k)X_k$ because $f_k\vert_K=\acute{\beta}(k)=\beta(\cdot,k)$ by definition of $f$.

We have to prove that $\cA^G=\FF$ and that the algebra map $\Phi:\cA\#  \FF G\rightarrow \End_\FF(\cA)$
sending $ag\mapsto\bigl(b\mapsto a\sigma_g(b)\bigr)$ is an isomorphism.

The action of $G$ on $\cA$ leaves the components of the $K$-grading invariant, hence in 
order to compute $\cA^G$, it is sufficient to consider homogeneous elements. 
Let $l\in\LL^\times$ and $k\in K$ such that $lX_k\in\cA^G$. 
The computation in the proof of part (ii) of Proposition \ref{pr:Kbeta} shows that, for all $g\in G$, 
$\sigma_g(lX_k)=(\dc l)(g)f_k(g)lX_k$, hence $f_k=(\dc l)^{-1}$ and, since $f$ is injective, $k=e$ and $f_e=(\dc l)^{-1}$.
But all automorphisms $\sigma_g$ must send the identity element $1_\cA=\tau(e,e)^{-1}X_e$ to itself, 
so $f_e=\dc\bigl(\tau(e,e)\bigr)$. It follows that $l\FF^\times=\tau(e,e)^{-1}\FF^\times$, 
and hence $lX_e\in\FF^\times 1_\cA$. 

Finally, consider the group $\cA_{\textup{gr}}^\times$ of nonzero homogeneous elements of $\cA$ 
(for the natural $K$-grading). For any $g\in G$, consider the map:
\[
\begin{split}
\chi_g: \cA_{\textup{gr}}^\times&\longrightarrow \LL^\times\\
   a\ &\mapsto\ \sigma_g(a)a^{-1}.
\end{split}
\]
For $k\in K$ and $0\neq a\in \cA_k$, we have $\sigma_g(a)\in\cA_k=\LL a$, so $\chi_g$ is well defined. 
(Explicitly, $\chi_g(lX_k)=(\dc l)(g)f_k(g)$ for any $g\in G$.)
Moreover, $\chi_g(ab)=\sigma_g(ab)(ab)^{-1}=\sigma_g(a)\bigl(\sigma_g(b)b^{-1}\bigr)a^{-1}=\chi_g(a)\chi_g(b)$, so
$\chi_g$ is a character. For any elements $g_1\ne g_2$ in $G$, let us check that $\chi_{g_1}\neq \chi_{g_2}$.
Clearly, this is equivalent to $\sigma_{g_1}\ne\sigma_{g_2}$, so we have to show that the $G$-action is faithful: 
$\sigma_g\ne\id_\cA$ for all $g\ne e$. If $g\notin K$, then already $\sigma_g\vert_\LL\neq \id_\LL$. 
But if $e\neq g\in K$, then $\sigma_g(X_k)=\beta(g,k)X_k$, so $\sigma_g\ne\id_\cA$ because $\beta(g,K)\ne 1$
by the nondegeneracy of $\beta$. 

Now, the $K$-grading on $\cA$ induces a $K$-grading on $\cA\#\FF G$ and on $\End_\FF(\cA)$, and 
$\Phi$ is a homomorphism of $K$-graded algebras. Since $\dim_\FF(\cA\#\FF G)=\lvert G\rvert^2=\dim_\FF\End_\FF(\cA)$, 
it suffices to prove that $\Phi$ is injective. To this end, suppose $X\in\ker\Phi\cap\cA_k$, for some $k\in K$. 
Then $X=\sum_{g\in G}(l_gX_k)g$, with $l_g\in \LL$ for all $g\in G$, and we get
$0=\Phi(X)=\Phi(X_k)\Phi\bigl(\sum_{g\in G}l_gg\bigr)$. Since $X_k$ is invertible, we get 
$0=\Phi\bigl(\sum_{g\in G}l_gg\bigr)=\sum_{g\in G}l_g\sigma_g$. Hence, for any $a\in\cA_{\textup{gr}}^\times$, 
we have $0=\bigl(\sum_{g\in G}l_g\sigma_g(a)\bigr)a^{-1}=\sum_{g\in G}l_g\chi_g(a)$. 
By the linear independence of characters, we conclude that $l_g=0$ for all $g\in G$, so $X=0$.
\end{proof}

\subsection{Classification up to isomorphism}
We can now obtain a classification of simple $G$-Galois extensions of $\FF$, but first we need to introduce some notation.
Fix an algebraic closure $\overline{\FF}$ of $\FF$. Then every finite field extension of $\FF$ is isomorphic, 
over $\FF$, to a subfield of $\overline{\FF}$. It follows that, for every pair $(\LL,\theta)$, where $\LL$ is a finite 
\emph{abelian} Galois field extension of $\FF$ and $\theta:G\rightarrow \Gal(\LL/\FF)$ 
is a group homomorphism, there is a \emph{unique} isomorphic pair $(\LL',\theta')$ with $\LL'\subset\overline{\FF}$. 
Here isomorphism is understood in the sense of $G$-algebras over $\FF$: there exists an algebra isomorphism 
$\psi:\LL\rightarrow \LL'$ such that $\theta'_g=\psi\circ\theta_g\circ\psi^{-1}$ for all $g\in G$.

Consider the set $\mathfrak{Z}_\FF(G)$ of pairs $(\LL,\theta)$ where
\begin{enumerate}
\item[(1)] $\LL\subset\overline{\FF}$ is a finite Galois extension of $\FF$;

\item[(2)] $\theta:G\rightarrow \Gal(\LL/\FF)$ is a surjective group homomorphism such that, for
$K\bydef\ker\theta$, the following conditions hold:
\begin{enumerate}
\item[(i)] $K$ admits a nondegenerate alternating bicharacter with values in $\FF^\times$ or, equivalently, 
$K$ is isomorphic to $A\times A$ for some abelian group $A$ and 
$\FF$ contains a primitive root of unity of degree $\exp(K)$;
\item[(ii)] Every character $K\to\FF^\times$ can be extended to a $1$-cocycle $G\to\LL^\times$ or, equivalently, the 
``transgression'' map $\rho:\Hom(K,\FF^\times)\to\Hc^2(G/K,\LL^\times)$ in the exact sequence \eqref{eq:inf_res} is 
trivial. 
\end{enumerate}
\end{enumerate}

By Lemma \ref{lm:tau_exists} and Theorem \ref{th:main}, $\mathfrak{Z}_\FF(G)$ is a set of representatives for the 
isomorphism classes of the centers of simple $G$-Galois extensions of $\FF$. We note that (ii) is satisfied if 
every character $K\to\FF^\times$ extends to a character $G\to\FF^\times$ (for example, if $\FF$ contains a primitive root 
of unity of degree $\exp(G)$ or if $K$ is a direct summand of $G$), but this condition is not necessary 
(see Example \ref{ex:classes_of_Gal_ext}(c)).

For every pair $(\LL,\theta)\in\mathfrak{Z}_\FF(G)$, fix an extension of every character $K\to\FF^\times$
to a $1$-cocycle $G\to\LL^\times$ and denote by $\Xi(\LL,\theta)$ the resulting subset of $\Zc^1(G,\LL^\times)$; 
it is a transversal for the subgroup $\Bc^1(G,\LL^\times)\simeq\LL^\times/\FF^\times$.

Let $\mathfrak{T}_\FF(G)$ be the set of triples $(\LL,\theta,\xi)$ where $(\LL,\theta)\in\mathfrak{Z}_\FF(G)$ and 
\begin{enumerate}
\item[(3)] $\xi=\tau\Bc^2(K,\FF^\times)\in\Zc^2(K,\LL^\times)/\Bc^2(K,\FF^\times)$ such that the alternating bicharacter 
$\beta(k_1,k_2)\bydef\tau(k_1,k_2)\tau(k_2,k_1)^{-1}$, which depends only on the class $[\tau]=\tau\Bc^2(K,\LL^\times)$
in $\Hc^2(K,\LL^\times)$, is nondegenerate, and the following equation holds: 
$f_{k_1}f_{k_2}=\dc\bigl(\tau(k_1,k_2)\bigr)f_{k_1k_2}$ for all $k_1,k_2\in K$, 
where $f_k$ is the unique element of $\Xi(\LL,\theta)$ such that $f_k\vert_K=\beta(\cdot,k)$. 
\end{enumerate}

Let $\cC$ be a simple $G$-Galois extension of $\FF$. Recall that $\cC$ has a natural $G$-grading defined by \eqref{eq:MUgr}
(the ``Miyashita-Ulbrich grading'', see Remark \ref{re:Miyashita-Ulbrich}). The support $K$ of this grading is the kernel 
of the $G$-action on the center of $\cC$, and $\cC$ is a graded-division algebra with $\cC_e=Z(\cC)$.
In particular, the nondegenerate alternating bicharacter $\beta$ on $K$ with values in $\FF^\times$ 
is an invariant of $\cC$: $\beta(k_1,k_2)=c_1c_2c_1^{-1}c_2^{-1}$ for any nonzero $c_1\in\cC_{k_1}$, $c_2\in\cC_{k_2}$,
$k_1,k_2\in K$. 

We define $\Psi(\cC)\in\mathfrak{T}_\FF(G)$ to be the following triple $(\LL,\theta,\xi)$:
\begin{enumerate}
\item $\LL$ is the unique subfield of $\overline{\FF}$ that is isomorphic to $Z(\cC)$ over $\FF$; 
\item $\theta$ is the unique homomorphism $G\to\Gal(\LL/\FF)$ such that $(\LL,\theta)$ is isomorphic to 
$(Z(\cC),\bar{\sigma})$ where $\bar{\sigma}_g\bydef\sigma_g\vert_{Z(\cC)}$ and $\sigma_g\in\Aut_\FF(\cC)$ is the action 
of $g\in G$ on $\cC$;
\item $\xi\bydef\tau\Bc^2(K,\FF^\times)$ where $K=\ker\bar{\sigma}$, 
$\tau(k_1,k_2)\bydef\iota^{-1}(X_{k_1}X_{k_2}X_{k_1k_2}^{-1})$, $\iota:\LL\to Z(\cC)$ is an isomorphism over $\FF$, 
and $X_k\in\cC_k$ are nonzero elements chosen in such a way that $\sigma_g(X_k)=\iota(f_k(g))X_k$, 
where $f_k$ is the unique element of $\Xi(\LL,\theta)$ such that $f_k\vert_K=\beta(\cdot,k)$. 
\end{enumerate}
$\Psi(\cC)$ is well defined. Indeed, the triple $(\LL,\theta,\xi)$ satisfies all required conditions 
by Proposition \ref{pr:Kbeta} and its corollaries. In particular, part (ii) of Proposition \ref{pr:Kbeta} 
shows that the elements $X_k$ as above exist and are unique up to factors in $\FF^\times$, 
so the $2$-cocycle $(k_1,k_2)\mapsto X_{k_1}X_{k_2}X_{k_1k_2}^{-1}$ with values in $Z(\cC)$ is determined up to 
a coboundary in $\Bc^2(K,\FF^\times)$, and part (iv) shows that $f_{k_1}f_{k_2}=\dc\bigl(\tau(k_1,k_2)\bigr)f_{k_1k_2}$ 
for all $k_1,k_2\in K$. Finally, different choices of the isomorphism $\iota:\LL\to Z(\cC)$ 
produce the same coset $\xi$. Indeed, any two such isomorphisms $\iota$ and $\iota'$
differ by an element of $\Gal(\LL/\FF)$, so we can write $\iota'=\iota\circ\theta_h$ for some $h\in G$ thanks to 
the surjectivity of $\theta$. It follows that if the elements $X_k$ ($k\in K$) are chosen using $\iota$ 
then the elements $\sigma_h(X_k)$ are an allowable choice for $\iota'$:
\[
\begin{split}
\sigma_g(\sigma_h(X_k))&=\sigma_h(\sigma_g(X_k))=\sigma_h\bigl(\iota(f_k(g))X_k\bigr)\\
&=\bar{\sigma}_h\bigl(\iota(f_k(g))\bigr)\sigma_h(X_k)=\iota\bigl(\theta_h(f_k(g))\bigr)\sigma_h(X_k)\\
&=\iota'(f_k(g))\sigma_h(X_k),
\end{split}
\]
for all $g\in G$ and $k\in K$, and these elements give 
\[
\begin{split}
\tau'(k_1,k_2)&=(\iota')^{-1}\bigl(\sigma_h(X_{k_1})\sigma_h(X_{k_2})\sigma_h(X_{k_1k_2})^{-1}\bigr)\\
&=\theta_h^{-1}\bigl(\iota^{-1}(\bar{\sigma}_h(X_{k_1}X_{k_2}X_{k_1k_2}^{-1}))\bigr)\\
&=\theta_h^{-1}\bigl(\theta_h(\iota^{-1}(X_{k_1}X_{k_2}X_{k_1k_2}^{-1}))\bigr)=\tau(k_1,k_2).
\end{split}
\]

\begin{corollary}\label{co:main}
Let $G$ be a finite abelian group and $\FF$ a field. Denote by $\uE_\FF^{\textup{simple}}(G)$ the set of 
isomorphism classes of simple $G$-Galois extensions of $\FF$.
Then the mapping $\uE_\FF^{\textup{simple}}(G)\rightarrow \mathfrak{T}_\FF(G)$ sending
$[\cC]_{\textup{$G$-alg}}\mapsto\Psi(\cC)$ is a bijection.
\end{corollary}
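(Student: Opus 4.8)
The plan is to produce an explicit two-sided inverse $\Lambda:\mathfrak{T}_\FF(G)\to\uE_\FF^{\textup{simple}}(G)$ of $\Psi$, rather than arguing injectivity and surjectivity separately, since Theorem \ref{th:main}(2) already supplies the construction needed in one direction. First I would record that $\Psi$ genuinely descends to isomorphism classes. Every ingredient of the triple $\Psi(\cC)$ is intrinsic to $\cC$ as a $G$-algebra: the center $Z(\cC)$ with its $G$-action, the support $K$ and the bicharacter $\beta$ of the Miyashita-Ulbrich grading \eqref{eq:MUgr}, and, once the normalized elements $X_k$ are fixed, the coset $\tau\Bc^2(K,\FF^\times)$. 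Hence a $G$-algebra isomorphism $\cC\to\cC'$ transports one set of data to the other and forces $\Psi(\cC)=\Psi(\cC')$. (Independence of the auxiliary choices $\iota$ and $X_k$ for a fixed $\cC$ has already been verified in the paragraph defining $\Psi$.)

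Next I would define $\Lambda$ by sending $(\LL,\theta,\xi)$ to the class of $\LL^\tau K$, where $\tau$ is any representative of $\xi$ and the $G$-action is \eqref{eq:sigma_action} with the cocycles $f_k$ chosen, via Lemma \ref{lm:pi*Delta}, so that $f_{k_1}f_{k_2}=\dc(\tau(k_1,k_2))f_{k_1k_2}$; Theorem \ref{th:main}(2) ensures this is a simple $G$-Galois extension. To see $\Lambda$ is well defined, replacing $\tau$ by $\tau'=\tau\,\dc\mu$ with $\mu:K\to\FF^\times$ leaves $\beta$, and hence the chosen $f_k$, unchanged, so the $\LL$-linear map $X_k\mapsto\mu(k)X_k$ is an algebra isomorphism $\LL^\tau K\to\LL^{\tau'}K$; its $G$-equivariance is immediate because each $\mu(k)\in\FF^\times$ is fixed by every $\bar\sigma_g$, while the $f_k$ are shared.

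The identity $\Psi\circ\Lambda=\id$ is then a bookkeeping check: for $\cC=\LL^\tau K$ one has $Z(\cC)=\LL$ by nondegeneracy of $\beta$, and the natural $K$-grading coincides with the Miyashita-Ulbrich grading, because for $k\in K$ the operator $\sigma_k$ is $\LL$-linear with $\sigma_k(X_{k'})=\beta(k,k')X_{k'}$, which is exactly the condition \eqref{eq:MUgr}. Taking $\iota=\id$ and the standard basis $X_k$ recovers the cocycle $\tau$, so $\Psi(\LL^\tau K)=(\LL,\theta,\xi)$.

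The heart of the matter is $\Lambda\circ\Psi=\id$, equivalently injectivity. Given $\cC,\cC'$ with $\Psi(\cC)=\Psi(\cC')=(\LL,\theta,\xi)$, I would fix the isomorphisms $\iota,\iota'$ and homogeneous elements $X_k\in\cC_k$, $X_k'\in\cC'_k$ used to define $\Psi$, normalized by $\sigma_g(X_k)=\iota(f_k(g))X_k$ and $\sigma_g'(X_k')=\iota'(f_k(g))X_k'$ for the \emph{same} cocycles $f_k\in\Xi(\LL,\theta)$. Equality of the triples gives $\tau'=\tau\,\dc\mu$ for some $\mu:K\to\FF^\times$, and I would then set $\phi(\iota(l)X_k)=\mu(k)\,\iota'(l)X_k'$. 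Using that $\iota(Z(\cC))$ is central and $X_{k_1}X_{k_2}=\iota(\tau(k_1,k_2))X_{k_1k_2}$, one checks that $\phi$ is multiplicative (adjusting the sign of $\mu$ if needed), and $G$-equivariance follows exactly as for $\Lambda$ since $\theta$ and the $f_k$ are common to both extensions. I expect the only delicate point to be keeping the normalizations of $\iota$, of the $X_k$, and of the fixed transversal $\Xi(\LL,\theta)$ consistent across $\cC$ and $\cC'$, so that the coboundary $\dc\mu$ is genuinely $\FF^\times$-valued and $\phi$ is simultaneously an algebra map and $G$-equivariant; once this bookkeeping is aligned, both verifications reduce to computations already performed in the proofs of Proposition \ref{pr:Kbeta} and Theorem \ref{th:main}.
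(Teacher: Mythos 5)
Your overall strategy --- well-definedness by transporting the data along an isomorphism, injectivity via an explicit rescaling map, surjectivity from Theorem \ref{th:main}(2) --- is the same as the paper's, and your well-definedness and injectivity steps are essentially the paper's own arguments. However, there is a genuine gap in your definition of $\Lambda$, and it is exactly the subtlety that the fixed transversal $\Xi(\LL,\theta)$ was introduced to control. You build the $G$-action on $\LL^\tau K$ from cocycles $f_k$ ``chosen via Lemma \ref{lm:pi*Delta}''. That choice is not unique: if $\{f_k\}$ is one admissible system, then so is $\{\lambda_k f_k\}$ for any homomorphism $\lambda:K\to\Bc^1(G,\LL^\times)\simeq\LL^\times/\FF^\times$, since multiplicativity of $\lambda$ preserves the relation $f_{k_1}f_{k_2}=\dc\bigl(\tau(k_1,k_2)\bigr)f_{k_1k_2}$. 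By Remark \ref{re:Alberto}, the resulting $G$-algebra structures on $\LL^\tau K$ are pairwise \emph{non-isomorphic}: $\Hom(K,\LL^\times/\FF^\times)$ acts simply transitively on these isomorphism classes, and this group is nontrivial in general. Concretely, if you use $\tilde{f}_k=(\dc l_k)f_k$ instead of the canonical $f_k\in\Xi(\LL,\theta)$, then the elements normalized as $\Psi$ requires are $Y_k\bydef l_k^{-1}X_k$, and they yield the cocycle $\tau\nu$ with $\nu(k_1,k_2)=l_{k_1}^{-1}l_{k_2}^{-1}l_{k_1k_2}$; this $\nu$ lies in $\Zc^2(K,\FF^\times)\cap\Bc^2(K,\LL^\times)$ but in general \emph{not} in $\Bc^2(K,\FF^\times)$ (the quotient is $\simeq\Hom(K,\LL^\times/\FF^\times)$ by the exact sequence \eqref{eq:exact_FL}). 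So, as written, either $\Lambda$ is ill-defined or $\Psi\circ\Lambda\neq\id$; your claim that ``taking $\iota=\id$ and the standard basis $X_k$ recovers the cocycle $\tau$'' fails for a bad choice of the $f_k$.

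The fix is inside the framework you are already using, and it is what the paper does implicitly: for a triple $(\LL,\theta,\xi)\in\mathfrak{T}_\FF(G)$, condition (3) in the definition of $\mathfrak{T}_\FF(G)$ says that the relation $f_{k_1}f_{k_2}=\dc\bigl(\tau(k_1,k_2)\bigr)f_{k_1k_2}$ holds for the \emph{canonical} representatives $f_k\in\Xi(\LL,\theta)$ determined by $\beta$, so you must define $\Lambda$ using precisely those $f_k$ --- this is Definition \ref{df:modelsC}, i.e., $\Lambda(\LL,\theta,\xi)=[\cC(\LL,\theta,\tau)]$ --- rather than an arbitrary output of Lemma \ref{lm:pi*Delta}. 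Once this is corrected, the rest of your proposal is sound and coincides with the paper's proof: independence of $\Lambda$ on the representative $\tau$ of $\xi$, the computation $\Psi\circ\Lambda=\id$ (your identification of the natural $K$-grading with the Miyashita--Ulbrich grading is correct), and the injectivity argument via $\sum_k l_kX_k\mapsto\sum_k\lambda_k^{-1}l_kX'_k$ with $\lambda_k\in\FF^\times$, all of which silently assume that the $f_k$ are pinned down by $(\LL,\theta,\beta)$ alone.
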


\begin{proof}
Let $\cC$ and $\cC'$ be simple $G$-Galois extensions of $\FF$. Denote $\Psi(\cC)=(\LL,\theta,\xi)$, 
$\Psi(\cC')=(\LL',\theta',\xi')$, and similarly for other parameters. 

If $\cC\simeq\cC'$, then $K=K'$, $\beta=\beta'$ and $(\LL,\theta)\simeq(\LL',\theta')$, 
hence, by construction, $(\LL,\theta)=(\LL',\theta')$ and $f_k=f'_k$ for all $k\in K$. 
Consider an isomorphism $\psi:\cC\to\cC'$. As we have seen, the isomorphisms $\iota:\LL\to Z(\cC)$ and 
$\iota':\LL\to Z(\cC')$ may be chosen arbitrarily, so we pick some $\iota$ and set 
$\iota'\bydef\psi\circ\iota$. 
It follows that if the elements $X_k$ ($k\in K$) are chosen using $\iota$ 
then the elements $\psi(X_k)$ are an allowable choice for $\iota'$:
\[
\begin{split}
\sigma'_g(\psi(X_k))&=\psi(\sigma_g(X_k))=\psi\bigl(\iota(f_k(g))X_k\bigr)\\
&=\psi\bigl(\iota(f_k(g))\bigr)\psi(X_k)=\iota'(f_k(g))\psi(X_k),
\end{split}
\]
for all $g\in G$ and $k\in K$, and these elements give 
\[
\begin{split}
\tau'(k_1,k_2)&=(\iota')^{-1}\bigl(\psi(X_{k_1})\psi(X_{k_2})\psi(X_{k_1k_2})^{-1}\bigr)\\
&=\iota^{-1}\bigl(\psi^{-1}(\psi(X_{k_1}X_{k_2}X_{k_1k_2}^{-1}))\bigr)=\tau(k_1,k_2).
\end{split}
\]
We have shown that $\Psi(\cC)=\Psi(\cC')$, so our mapping is well defined.

Conversely, suppose that $\Psi(\cC)=\Psi(\cC')$. Replacing $\cC$ and $\cC'$ by isomorphic copies, we may assume 
that $\LL=Z(\cC)=Z(\cC')$ and $\theta=\bar{\sigma}=\bar{\sigma}'$. Choose elements $X_k$ and $X'_k$ ($k\in K$) using 
$\iota=\iota'=\id_\LL$. Since $\tau$ and $\tau'$ differ by an element of $\Bc^2(K,\FF^\times)$, there exist elements 
$\lambda_k\in\FF^\times$ ($k\in K$) such that 
$\tau'(k_1,k_2)=\lambda_{k_1}\lambda_{k_2}\lambda_{k_1k_2}^{-1}\tau(k_1,k_2)$.
It is easy to verify that the mapping $\cC\to\cC'$ sending $\sum_{k\in K}l_k X_k\mapsto\sum_{k\in K}\lambda^{-1}_k l_k X'_k$
is an isomorphism of $G$-algebras.

The surjectivity of $\Psi$ follows from part (2) of Theorem \ref{th:main}.
\end{proof}

It is convenient to define a specific $G$-algebra in the isomorphism class $\Psi^{-1}(\LL,\theta,\xi)$:

\begin{df}\label{df:modelsC}
Given a triple $(\LL,\theta,\xi)\in \mathfrak{T}_\FF(G)$, let $\tau$ be a representative of the coset $\xi$.
Denote by $\cC(\LL,\theta,\tau)$ the following simple $G$-Galois extension of $\FF$: as an algebra, it is 
$\LL^\tau K$, with $K\bydef\ker\theta$, and the action of $G$ given by 
$g\cdot(lX_k)=\theta_g(l)f_k(g)X_k$, where $f_k$ is the unique element of $\Xi(\LL,\theta)$ 
such that $f_k\vert_K=\beta(\cdot,k)$. By abuse of notation, we will sometimes write $\cC(\LL,\theta,\xi)$ 
instead of $\cC(\LL,\theta,\tau)$.
\end{df}

For a pair $(\LL,\theta)\in\mathfrak{Z}_\FF(G)$ and a nondegenerate alternating bicharacter 
$\beta:{K\times K}\to\FF^\times$, there exists a $2$-cocycle $\tau_0\in\Zc^2(K,\LL^\times)$
that satisfies $\beta(k_1,k_2)=\tau_0(k_1,k_2)\tau_0(k_2,k_1)^{-1}$ and 
$f_{k_1}f_{k_2} = \dc\bigl(\tau_0(k_1,k_2)\bigr)f_{k_1k_2}$ for all $k_1,k_2\in K$ (Lemma \ref{lm:tau_exists}). 
Then all other such  $2$-cocycles have the form $\tau=\tau_0\alpha$ where $\alpha\in\Zc^2_{\textup{sym}}(K,\FF^\times)$, 
since the kernel of $\dc:\LL^\times\to\Bc^1(G,\LL^\times)$ is $\FF^\times$. 
Therefore, the isomorphism classes of simple $G$-Galois extensions of $\FF$ whose center is isomorphic to $(\LL,\theta)$ 
and commutation relations are given by $\beta$ are in bijection with 
$\Hc^2_{\textup{sym}}(K,\FF^\times)\simeq\Ext(K,\FF^\times)$. 

\begin{remark}\label{re:Alberto}
We can assign to each simple $G$-Galois extension $\cC$ of $\FF$ an easier invariant $\overline{\Psi}(\cC)$,
which is obtained from $\Psi(\cC)$ by replacing the third component of the triple by its image in $\Hc^2(K,\LL^\times)$.
However, this is not a complete invariant, in general. It classifies simple $G$-Galois extensions up to the following
equivalence relation: $\cC\sim\cC'$ if there exists an isomorphism of $G$-graded algebras $\psi:\cC\to\cC'$ that restricts
to an isomorphism of $G$-algebras $Z(\cC)\to Z(\cC')$. For an element $(\LL,\theta,\xi)\in\mathfrak{T}_\FF(G)$,
the set of isomorphism classes of simple Galois extensions of $\FF$ that are equivalent to $\cC(\LL,\theta,\xi)$ 
in this sense is in bijection with the triples $(\LL,\theta,\xi')$ where 
$\xi'$ belongs to the coset of $\xi$ with respect to 
the subgroup $\bigl(\Zc^2(K,\FF^\times)\cap\Bc^2(K,\LL^\times)\bigr)/\Bc^2(K,\FF^\times)$ of 
$\Zc^2(K,\LL^\times)/\Bc^2(K,\FF^\times)$. This subgroup is the kernel of the homomorphism 
$\Hc^2_{\textup{sym}}(K,\FF^\times)\to\Hc^2_{\textup{sym}}(K,\LL^\times)$ in the exact sequence \eqref{eq:exact_FL}, 
so it is equal to the image of the connecting homomorphism 
$\Hom(K,\LL^\times/\FF^\times)\to\Hc^2_{\textup{sym}}(K,\FF^\times)$ and therefore isomorphic to 
$\Hom(K,\LL^\times/\FF^\times)$. Actually, this latter group acts simply transitively on the above set of isomorphism
classes as follows: given a simple $G$-Galois extension $\cC$ of $\FF$ and a homomorphism 
$\lambda:K\to\Bc^1(G,\LL^\times)\simeq\LL^\times/\FF^\times$, we define $\cC_\lambda$ to be the algebra $\cC$ with the  
modified $G$-action $\sigma^\lambda:G\to\Aut_\FF(\cC)$ given by $\sigma^\lambda_g(c)\bydef\lambda_k(g)\sigma_g(c)$ for all 
$c\in\cC_k$, $k\in K$. In other words, $\sigma^\lambda_g=\sigma_{\kappa_\lambda(g)}\sigma_g$ where 
$\kappa_\lambda:G\to K$ is defined by $\beta(\kappa_\lambda(g),k)=\lambda_k(g)$ for all $k\in K$. 
Since homomorphisms $G\to\FF^\times$ are precisely the $1$-cocycles with values in $\FF^\times$, 
and a $1$-cocycle is a coboundary if and only if it has trivial restriction to $K$, 
we conclude that $\kappa_\lambda$ is a homomorphism with trivial restriction to $K$, 
and the mapping $\lambda\mapsto\kappa_\lambda$ yields an isomorphism $\Hom(K,\LL^\times/\FF^\times)\to\Hom(G/K,K)$. 
\end{remark}

\begin{examples}\label{ex:classes_of_Gal_ext} 
The following are special cases of simple $G$-Galois extensions of $\FF$:
\begin{enumerate}
\item[(a)] Galois field extensions $\LL$ of $\FF$ with $\Gal(\LL/\FF)\simeq G$: these correspond to the case $K=1$.

\item[(b)] Central simple graded-division algebras over $\FF$ with support $G$ and $1$-dimensional homogeneous components:
these correspond to the case $K=G$ and are parametrized by the elements of $\Hc^2(G,\FF^\times)$ such that the 
corresponding alternating bicharacter $G\times G\to\FF^\times$ is nondegenerate. If $\FF$ is algebraically closed, these 
are the only simple $G$-Galois extensions, and they are parametrized by nondegenerate alternating bicharacters.

\item[(c)] Suppose $\Br(\FF)$ is trivial (for example, $\FF$ is finite). 
Then, for any subgroup $K$ admitting a nondegenerate alternating bicharacter 
and any Galois field extension $\LL$ with $\Gal(\LL/\FF)\simeq G/K$, every central simple 
graded-division algebra over $\LL$ with support $K$ and $1$-dimensional homogeneous components admits a $G$-action 
that makes it a $G$-Galois extension of $\FF$ (with the given underlying grading). 
Up to isomorphism, these actions are parametrized by the set $\Aut(G/K)\times\Hom(K,\LL^\times/\FF^\times)$ 
(see Remark \ref{re:Alberto}). All simple $G$-Galois extensions have this form.

\item[(d)] Suppose $\FF$ is real closed (for example, $\FF=\RR$). Then $K$ must be $2$-elementary of even rank, and 
it follows from the classification of central simple graded-division algebras \cite{BZreal_simple,ARE} 
(and can also be shown by considering $1$-cocycles) that $G$ must be $2$-elementary in order for simple 
$G$-Galois extensions to exist. This condition is also sufficient.
\end{enumerate}
\end{examples}

\smallskip

Among the $G$-algebras, $G$-Galois extensions of $\FF$ can be characterized as follows:

\begin{corollary}\label{co:main2}
Let $G$ be a finite abelian group and $\FF$ a field. Let $\cA$ be an algebra over $\FF$
endowed with a $G$-action $\sigma:G\rightarrow \Aut_\FF(\cA)$. Then the $G$-algebra $\cA$ is a 
$G$-Galois extension of $\FF$ if and only if the following  conditions hold:
\begin{enumerate}
\item $\dim_\FF\cA=|G|$;
\item $\LL\bydef Z(\cA)$ is a $G/K$-Galois extension of $\FF$ where $K$ is the kernel of the homomorphism 
$\bar\sigma:G\rightarrow\Aut_\FF(\LL)$, $g\mapsto \sigma_g\vert_\LL$;

\item $\FF$ contains a primitive root of unity of degree $\exp(K)$;

\item For every $\chi\in\wh{K}\bydef\Hom(K,\FF^\times)$, the eigenspace 
\[
\cA_\chi\bydef\{a\in \cA\mid \sigma_k(a)=\chi(k)a\ \forall k\in K\}
\]
contains an invertible element.
\end{enumerate}
\end{corollary}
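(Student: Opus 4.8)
The statement is a biconditional, so the plan is to prove the two implications separately, with condition (1) serving as a bridge: for any $G$-Galois extension, bijectivity of the map in Definition \ref{df:Galois}(b) forces $(\dim_\FF\cA)^2=|G|\dim_\FF\cA$, hence $\dim_\FF\cA=|G|$, while (1) is among the hypotheses in the converse. For the \emph{forward} implication I would not argue directly but reduce to the simple case handled in Section \ref{se:SimpleGal}. By surjectivity of $\zeta$ (Theorem \ref{th:PP}) and Theorem \ref{th:GTCD}, any $G$-Galois extension is isomorphic as a $G$-algebra to $\Ind_T^G(\cC^{\mathrm{op}})$ for a subgroup $T\le G$ and the simple $T$-Galois extension $\cC=\Cent_\cD(\cD_e)$ attached to a central simple $G$-graded-division algebra $\cD$ with support $T$ (and $\cC^{\mathrm{op}}$ is again simple $T$-Galois). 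Using $\Ind_T^G(\cC^{\mathrm{op}})\cong(\cC^{\mathrm{op}})^{[G:T]}$ and the action $(g\cdot f)(h)=f(hg)$, I would check that an element of $G$ fixes the center pointwise exactly when it lies in $T$ and acts trivially on $Z(\cC)$, so that the kernel $K$ of the $G$-action on $Z(\cA)$ equals the kernel $K_0$ of the $T$-action on the field $Z(\cC)$; that $Z(\cA)=\Ind_{T/K_0}^{G/K_0}(Z(\cC))$; and that the $K$-eigenspaces of $\cA$ are the inductions of the $K_0$-eigenspaces of $\cC^{\mathrm{op}}$.

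From this, (2)--(4) follow from the simple case. Proposition \ref{pr:DCK} gives that $Z(\cC)$ is a Galois field extension with group $T/K_0$, so its induction $Z(\cA)$ is a $G/K$-Galois extension of $\FF$ (by the fact, recorded after Theorem \ref{th:GTCD}, that $\Ind$ carries Galois extensions to Galois extensions), yielding (2); Corollary \ref{co:Kbeta} gives that $\FF$ contains a primitive $\exp(K)$-th root of unity since $K=K_0$, yielding (3); and Proposition \ref{pr:Kbeta} describes $\cC$ as $Z(\cC)^\tau K$ with $\sigma_k(X_j)=\beta(k,j)X_j$ and $\beta$ nondegenerate, so each homogeneous component $Z(\cC)X_k$ contains the invertible $X_k$. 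Hence, as $k$ runs over $K$ and $\beta(\cdot,k)$ runs over all of $\wh K$ by nondegeneracy of $\beta$, each eigenspace of $\cA$ contains an invertible element, yielding (4).

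For the \emph{converse}, assume (1)--(4). By (3) the commuting algebra automorphisms $\{\sigma_k:k\in K\}$ are simultaneously diagonalizable, so $\cA=\bigoplus_{\chi\in\wh K}\cA_\chi$ is a $\wh K$-grading and, by (4), each $\cA_\chi$ contains an invertible $u_\chi$. For $a\in\cA_\chi$ one has $au_\chi^{-1}\in\cA^K=\cA_1$, hence $\cA_\chi=\cA_1u_\chi$ and $\dim_\FF\cA_\chi=\dim_\FF\cA_1$; with (1) and $|\wh K|=|K|$ this gives $\dim_\FF\cA_1=[G:K]$. Since $Z(\cA)\subseteq\cA^K=\cA_1$ by the definition of $K$, and $\dim_\FF Z(\cA)=|G/K|=[G:K]$ by (2), I conclude $\cA_1=\cA^K=Z(\cA)=:\LL$. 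Then $\cA^G=\LL^{G/K}=\FF$ by the Galois property of $\LL$, and the $G$-action is faithful: $\sigma_g=\id$ forces $g\in K$ and then $\chi(g)=1$ for all $\chi\in\wh K$, so $g=e$ by (3). It remains to verify Definition \ref{df:Galois}(a), that $\Phi:\cA\#\FF G\to\End_\FF(\cA)$ is an isomorphism; as the dimensions agree ($|G|^2$ on each side), only injectivity is needed.

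Here is the one genuinely delicate point, and the main obstacle: one cannot simply invoke the linear-independence-of-characters step used in the proof of Theorem \ref{th:main}(2), because there $Z(\cA)$ was a field, whereas now $\LL=Z(\cA)$ is only a commutative $G/K$-Galois extension and may have zero divisors. Instead I would argue as follows. Since $G$ is abelian each $\sigma_g$ preserves the $\wh K$-grading, so $\Phi$ is graded and it suffices to kill $\ker\Phi$ in each degree. A degree-$\chi$ element is $X=\sum_g l_g u_\chi\,g$ with $l_g\in\LL$, and $\Phi(X)=0$ reduces, after cancelling the invertible left multiplication by $u_\chi$, to $\sum_g l_g\sigma_g=0$ on $\cA$. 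Evaluating on $mu_\psi$ ($m\in\LL$, $\psi\in\wh K$) and writing $\sigma_g(u_\psi)=c_{g,\psi}u_\psi$ with $c_{g,\psi}\in\LL^\times$ turns this into $\sum_g l_g c_{g,\psi}\bar\sigma_g(m)=0$ for all $m$. Grouping by cosets of $K$ and using that the Galois automorphisms of $\LL$ are $\LL$-linearly independent (Definition \ref{df:Galois}(a) applied to the commutative extension $\LL$, which holds whether or not $\LL$ is a field) gives $\sum_{k\in K}l_{gk}c_{gk,\psi}=0$ for each coset and each $\psi$; finally $c_{gk,\psi}=\psi(k)c_{g,\psi}$ and the invertibility of the character table $(\psi(k))_{\psi,k}$ over $\FF$, guaranteed by (3), forces every $l_{gk}=0$. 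Thus $\Phi$ is injective, and $\cA$ is a $G$-Galois extension.
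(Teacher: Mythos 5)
Your proof is correct, and its forward half follows essentially the paper's own route: reduce to a simple $T$-Galois extension via $\Ind_T^G$ (surjectivity of $\zeta$ in Theorem \ref{th:PP} plus Theorem \ref{th:GTCD}), transfer conditions (1)--(4) across the induction, and read them off from the structure results of Sections \ref{se:div_to_Gal}--\ref{se:SimpleGal}. The converse is where you genuinely diverge, and the zero-divisor issue you isolate is exactly the point where the paper takes a different path: it first proves the corollary in the special case where $\LL=Z(\cA)$ is a field, in which case injectivity of $\Phi$ follows from Dedekind's independence of the characters $a\mapsto\sigma_g(a)a^{-1}$ with values in the field $\LL$ (the argument at the end of the proof of Theorem \ref{th:main}(2)), and then reduces the general case to this one: by condition (2), $\LL\simeq\Ind_{T/K}^{G/K}(\KK)$ for a Galois field extension $\KK$, the primitive idempotents of $\LL$ split $\cA$ into ideals permuted transitively by $G$, whence $\cA\simeq\Ind_T^G(\varepsilon\cA)$, and conditions (1)--(4) pass to $\varepsilon\cA$. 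You avoid this reduction altogether: your substitute for Dedekind over $\LL$ is the observation that hypothesis (2) itself --- Definition \ref{df:Galois}(a) applied to the commutative extension $\LL$ --- yields $\LL$-linear independence of the distinct automorphisms $\bar\sigma_{gK}$ even when $\LL$ has zero divisors, and you then kill the coefficients within each coset of $K$ using the invertibility over $\FF$ of the character table of $K$, which condition (3) guarantees (a primitive $\exp(K)$-th root of unity in $\FF$ forces $\chr{\FF}\nmid|K|$). The supporting computations are all sound: $\ker\Phi$ is $\wh{K}$-graded, $\cA_1=Z(\cA)$ by the dimension count, and $c_{gk,\psi}=\psi(k)c_{g,\psi}$ with $c_{g,\psi}\in\LL^\times$. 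What your route buys is a uniform, idempotent-free injectivity argument that never needs $Z(\cA)$ to be a field; what the paper's route buys is economy, reusing verbatim the character argument already written for Theorem \ref{th:main}(2) and the same $\Ind$-transfer of conditions (1)--(4) that the forward implication requires anyway.
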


\begin{proof}
First we consider the special case where $\LL$ is a field.
If $\cA$ is a $G$-Galois extension of $\FF$, then Theorem \ref{th:main} shows that conditions (1) through (4) 
are satisfied, since in this case $\cA_\chi=\LL X_{\acute{\beta}^{-1}(\chi)}$.

Conversely, assume that these conditions hold. By condition (2), $\LL$ is a Galois field extension of $\FF$ 
with $\Gal(\LL/\FF)\simeq G/K$. Condition (3) shows that the commuting $\LL$-linear operators 
$\sigma_k$ ($k\in K$) can be simultaneously diagonalized, hence $\cA=\bigoplus_{\chi\in\wh{K}}\cA_\chi$ is a 
$\wh{K}$-grading on $\cA$ as an $\LL$-algebra. By condition (4), we can pick an invertible element $u_\chi\in\cA_\chi$ 
for each $\chi\in\wh{K}$. Since $|\wh{K}|=|K|$ by condition (3), the subspace $\bigoplus_{\chi\in\wh{K}}\LL u_\chi$ 
has $\FF$-dimension $|K|\dim_\FF\LL=|G|$, which is equal to $\dim_\FF\cA$ by condition (1). 
We conclude that $\cA_\chi=\LL u_\chi$ for all $\chi\in\wh{K}$, so $\cA$ is a $\wh{K}$-graded-division algebra, 
and $\cA^G=(\LL 1_\cA)^G=\FF 1_\cA$. 
It is clear that the action of $G$ on $\cA$ is faithful, so the argument at the end of the proof of part (2) of 
Theorem \ref{th:main} (with $\wh{K}$ playing the role of $K$) shows that $\cA$ is a $G$-Galois extension of $\FF$.

The general case reduces to the special case that we have considered using the functor $\Ind_T^G$. 
If $\cA\simeq\Ind_T^G(\cC)$ for a subgroup $T$ of $G$ and a $T$-algebra $\cC$ then, as a $T$-algebra, 
$\cA$ is isomorphic to the direct product of $[G:T]$ copies of $\cC$
and $Z(\cA)\simeq\Ind_T^G(Z(\cC))$ is isomorphic to the direct product of $[G:T]$ copies of $Z(\cC)$. 
It follows that conditions (1) through (4) hold for $\cA$ if and only if they hold for $\cC$ 
(with $T$ playing the role of $G$).

If $\cA$ is a $G$-Galois extension of $\FF$ then we know that $\cA\simeq\Ind_T^G(\cC)$ for a simple 
$T$-Galois extension $\cC$, and conditions (1) through (4) hold for $\cC$ by the special case considered above.
Conversely, let $\cA$ be a $G$-algebra satisfying these conditions. By condition (2), 
$\LL\simeq\Ind_{T/K}^{G/K}(\KK)$ for some Galois field extension $\KK$ of $\FF$, with 
$\Gal(\KK/\FF)\simeq T/K$, where $T/K$ is the stabilizer (under the $G/K$-action) 
of a primitive idempotent $\varepsilon$ of $\LL$. The primitive 
idempotents of $\LL$ give a decomposition of $\cA$ into a direct sum of ideals, which are permuted transitively by $G$.
A standard argument then shows that $\cA\simeq\Ind_T^G(\cC)$ for $\cC\bydef\varepsilon\cA$. 
Since $\cC$ satisfies conditions (1) through (4), it is a $T$-Galois extension of $\FF$ by the special case above. 
\end{proof}

\begin{remark}
For a simple $G$-Galois extension $\cA$ of $\FF$ as in Corollary \ref{co:main2}, the canonical $K$-grading is given as 
follows. Pick $0\ne u_\chi\in\cA_\chi$ and define
\[
\hat{\beta}(\chi,\psi)\bydef u_\chi u_\psi u_\chi^{-1} u_\psi^{-1}\quad\forall\chi,\psi\in\wh{K}.
\]
This is an alternating bicharacter, independent of the choice of the elements $u_\chi$. 
It takes values in $\FF$ by condition (3) and is nondegenerate since $Z(\cA)=\LL$. Hence $\hat{\beta}$
gives an isomorphism $K\to\wh{K}$: for any $k\in K$, we define $\varphi(k)$ to be the unique character $\chi$
such that $\hat{\beta}(\chi,\lambda)=\lambda(k)$ for all $\lambda\in\wh{K}$. We use $\varphi$ to convert the 
$\wh{K}$-grading on $\cA$ to a $K$-grading: $\cA=\bigoplus_{k\in K}\cC_k$ where $\cC_k\bydef\cA_{\varphi(k)}$.
We can also use $\varphi$ to transport $\hat{\beta}$ to $K$, i.e., we define $\beta:K\times K\to\FF^\times$ by 
\[
\beta(h,k)\bydef\hat{\beta}(\varphi(h),\varphi(k))=\varphi(k)(h)\quad\forall h,k\in K.
\]
It follows that, for any $b\in\cC_k$, we have $\sigma_h(b)=\varphi(k)(h)b=\beta(h,k)b=u_{\varphi(h)}bu_{\varphi(h)}^{-1}$,
so $\sigma_h(b)a=ab$ for all $a\in\cC_h$ and $b\in\cA$. Therefore, the grading $\cA=\bigoplus_{k\in K}\cC_k$ 
satisfies \eqref{eq:MUgr}. (Also, $\varphi=\acute{\beta}$.)
\end{remark}

\subsection{Construction with generators and relations}
For a fixed $(\LL,\theta)\in\mathfrak{Z}_\FF(G)$, the simple Galois extensions $\cC(\LL,\theta,\tau)$ 
as in Definition \ref{df:modelsC} can be explicitly described by means of generators and relations. 
Write $K\bydef\ker\theta$ as a direct product of cyclic subgroups generated by $a_1,\ldots, a_m$.
Then the twisted group algebra $\LL^\tau K$ is generated over $\LL$ by the elements $X_i\bydef X_{a_i}$, 
which satisfy the following relations: 
\[
X_i X_j=\beta_{ij}X_j X_i\text{ and }X_i^{o(a_i)}=\mu_i 1,
\]
where $\beta_{ij}\bydef\beta(a_i,a_j)\in\FF^\times$, $\mu_i\in\LL^\times$, 
and $o(g)$ denotes the order of a group element $g$. It is clear that these relations are defining.
Therefore, we can forget about the $2$-cocycle $\tau$ and express everything in terms of the bicharacter $\beta$ 
and the scalars $\mu_i$, $i=1,\ldots,m$. 
By definition of $\cC(\LL,\theta,\tau)$, the $G$-action on the generators is given by
\begin{equation}\label{eq:action_on_gen}
\sigma_g(X_i) = f_{a_i}(g) X_i\quad\forall g\in G,
\end{equation}
where $f_{a_i}$ is the fixed extension of the character $\beta(\cdot,a_i):K\to\FF^\times$ to a $1$-cocycle $G\to\LL^\times$.
Since $\sigma_g(X_i^{o(a_i)})=\sigma_g(X_i)^{o(a_i)}$, we have 
\begin{equation}\label{eq:mu_compat}
\theta_g(\mu_i)=f_{a_i}(g)^{o(a_i)}\mu_i\quad\forall g\in G.
\end{equation}
In other words, $\dc\mu_i=f_{a_i}^{o(a_i)}$, $i=1,\ldots,m$. 

Conversely, suppose $\beta:K\times K\to\FF^\times$ is an alternating nondegenerate bicharacter and $\mu_1,\ldots,\mu_m$ 
are elements of $\LL^\times$ satisfying \eqref{eq:mu_compat}. We note that, since $f\bydef\res^{-1}\circ\acute{\beta}$ 
is a homomorphism $K\to\Hc^1(G,\LL^\times)$, the $1$-cocycles $f_{a_i}^{o(a_i)}$ are coboundaries, 
so such $\mu_i$ always exist. Denote 
\begin{equation}\label{eq:modelC_genrel}
\cC(\LL,\theta,\beta,\mu)\bydef
\alg_\LL\langle X_1,\ldots,X_m\mid X_iX_j=\beta_{ij}X_jX_i\text{ and }X_i^{o(a_i)}=\mu_i 1\rangle.
\end{equation}
It is easy to see that assigning degree $a_i$ to the generator $X_i$ makes $\cC(\LL,\theta,\beta,\mu)$ 
a graded-division algebra over $\LL$ with support $K$ and $1$-dimensional homogeneous components 
(see e.g. \cite[Proposition 3.2]{BEK}). Moreover, the center is $\LL$ since $\beta$ is nondegenerate.
If we pull the $\LL$-vector space structure on $\cC\bydef\cC(\LL,\theta,\beta,\mu)$ back along an isomorphism 
$\psi:\LL\to\LL$ from $\Gal(\LL/\FF)$, the resulting algebra $\cC^\psi$ has the same generators $X_1,\ldots,X_m$,
but the relations change: instead of $\mu_i$, we will have $\psi^{-1}(\mu_i)$ (while $\beta_{ij}\in\FF^\times$ stay the 
same). From \eqref{eq:mu_compat} it follows that \eqref{eq:action_on_gen} defines an isomorphism of $\LL$-algebras 
$\sigma_g:\cC^{\theta_g^{-1}}\to\cC$ or, equivalently, a $\theta_g$-semilinear automorphism of $\cC$.
Moreover, since $f_{a_i}:G\to\LL^\times$ is a $1$-cocycle, the mapping $g\mapsto\sigma_g$ is a homomorphism
$G\to\Aut_\FF(\cC)$. Thus $\cC(\LL,\theta,\beta,\mu)$ becomes a $G$-algebra (over $\FF$). Since the homogeneous 
components are the eigenspaces for the action of the subgroup $K$, Corollary \ref{co:main2} tells us that 
$\cC(\LL,\theta,\beta,\mu)$ is a $G$-Galois extension of $\FF$.

\begin{proposition}\label{prop:classification_mu}
The $G$-algebras $\cC(\LL,\theta,\beta,\mu)$ and $\cC(\LL,\theta,\beta',\mu')$ are isomorphic if and only if 
$\beta'=\beta$ and $\mu'_i\in\mu_i(\FF^\times)^{[o(a_i)]}$ for all $i=1,\ldots,m$.
\end{proposition}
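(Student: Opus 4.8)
The plan is to prove both implications by working directly with the presentation \eqref{eq:modelC_genrel}, reducing every $G$-algebra isomorphism to an $\LL$-linear one. For the ``if'' direction I would assume $\beta'=\beta$ and $\mu'_i=\mu_i\nu_i^{o(a_i)}$ with $\nu_i\in\FF^\times$, and define $\psi\colon\cC(\LL,\theta,\beta,\mu)\to\cC(\LL,\theta,\beta',\mu')$ on generators by $\psi(X_i)=\nu_i^{-1}X_i'$, extended $\LL$-linearly and multiplicatively. The defining relations are respected: the commutation relations because $\beta'=\beta$ and the $\nu_i$ are central, and the power relations because $(\nu_i^{-1}X_i')^{o(a_i)}=\nu_i^{-o(a_i)}\mu_i'=\mu_i$. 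This $\psi$ is bijective, with inverse $X_i'\mapsto\nu_iX_i$, and $G$-equivariant because $\nu_i\in\FF^\times$ is fixed by every $\theta_g$ and the $1$-cocycles $f_{a_i}$ defining the two $G$-actions via \eqref{eq:action_on_gen} coincide (they are the unique elements of $\Xi(\LL,\theta)$ extending $\beta(\cdot,a_i)=\beta'(\cdot,a_i)$).

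For the ``only if'' direction, let $\psi$ be a $G$-algebra isomorphism. It restricts to a $G$-equivariant $\FF$-algebra isomorphism $\LL=Z(\cC)\to Z(\cC')=\LL$, and since $\Aut_\FF(\LL)=\Gal(\LL/\FF)=\bar\sigma(G)$ (Galois), we get $\psi\vert_\LL=\bar\sigma_h$ for some $h\in G$. Because $G$ is abelian, the action $\sigma_{h^{-1}}$ on the target is itself a $G$-algebra automorphism, so after replacing $\psi$ by $\sigma_{h^{-1}}\circ\psi$ I may assume $\psi$ is $\LL$-linear. The $K$-grading on each model coincides with the intrinsic Miyashita--Ulbrich grading \eqref{eq:MUgr}, which is preserved by any $G$-algebra homomorphism, so $\psi$ maps $\cC_k=\LL X_k$ onto $\cC'_k=\LL X_k'$; $\LL$-linearity then gives $\psi(X_k)=c_kX_k'$ with $c_k\in\LL^\times$. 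Applying $\psi$ to $X_{k_1}X_{k_2}=\beta(k_1,k_2)X_{k_2}X_{k_1}$ forces $\beta=\beta'$, whence $f_k=f_k'$; then $G$-equivariance, which reads $f_k(g)c_k=\bar\sigma_g(c_k)f_k(g)$ by \eqref{eq:sigma_action}, forces $c_k\in\LL^{\bar\sigma(G)}=\FF^\times$. Finally, applying the now $\FF^\times$-scaled $\psi$ to $X_i^{o(a_i)}=\mu_i 1$ yields $\mu_i=c_{a_i}^{o(a_i)}\mu_i'$, i.e.\ $\mu_i'\in\mu_i(\FF^\times)^{[o(a_i)]}$, as claimed.

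The only genuinely delicate point is this reduction to $\LL$-linear isomorphisms: a $G$-algebra isomorphism need not fix the center $\LL$ pointwise, and the clean remedy is to absorb its restriction $\bar\sigma_h$ by postcomposing with $\sigma_{h^{-1}}$ on the target, which is legitimate precisely because $G$ is abelian so the action commutes with itself. The second subtlety, extracting $c_k\in\FF^\times$ rather than merely $\LL^\times$, is exactly where the faithfulness of $\bar\sigma$ and the identity $\LL^{\bar\sigma(G)}=\FF$ are used; everything after that is routine bookkeeping with the defining relations \eqref{eq:modelC_genrel}.
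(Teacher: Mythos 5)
Your proof is correct and follows essentially the same route as the paper's: the explicit scaling $X_i\mapsto\nu_i^{-1}X_i'$ for the ``if'' direction, and for the converse the reduction to an $\LL$-linear isomorphism by composing with some $\sigma_h$ (legitimate since $G$ is abelian), followed by $\psi(X_k)=c_kX_k'$ from preservation of the $K$-grading, $c_k\in\FF^\times$ from $G$-equivariance, and the conclusion from the power relations. The only cosmetic difference is that you justify grading preservation via the intrinsic Miyashita--Ulbrich description \eqref{eq:MUgr}, while the paper notes that the homogeneous components are the eigenspaces of the $K$-action, which a $G$-equivariant map must preserve -- the same fact in two guises.
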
 

\begin{proof}
Denote the algebras $\cC(\LL,\theta,\beta,\mu)$ and $\cC(\LL,\theta,\beta',\mu')$ by $\cC$ and $\cC'$, 
and their generators (over $\LL$) by $X_i$ and $X_i'$ ($i=1,\ldots,m$), respectively.
If $\beta'=\beta$ and $\mu'_i=\mu_i\lambda_i^{o(a_i)}$ for some $\lambda_i\in\FF^\times$, then the mapping 
$X_i\mapsto\lambda_i^{-1} X'_i$ defines an $\LL$-linear isomorphism $\cC\to\cC'$, which is obviously $G$-equivariant.

Conversely, suppose there is an isomorphism $\psi:\cC\to\cC'$ as $G$-algebras over $\FF$. 
Since $G$ is abelian, each automorphism $\sigma_h$ of $\cC$ is $G$-equivariant. 
Since $\sigma_h$ is $\theta_h$-semilinear and $\theta:G\to\Gal(\LL/\FF)$ is surjective, we can replace $\psi$
by $\psi\circ\sigma_h$ for a suitable $h\in G$ and assume that $\psi$ is $\LL$-linear.
From the commutation relations it follows that $\beta'_{ij}=\beta_{ij}$ for all $i,j$, hence $\beta'=\beta$. 
Since $\psi$ is $G$-equivariant, in particular, it is an isomorphism of $K$-graded algebras, so 
$\psi(X_i)=l_i X'_i$ for some $l_i\in\LL^\times$.
But considering the $G$-action on the generators, we get 
\[
f_{a_i}(g)l_i X'_i=\psi(f_{a_i}(g)X_i)=\psi(g\cdot X_i)=g\cdot\psi(X_i)=g\cdot(l_i X'_i)=\theta_g(l_i)f_{a_i}(g) X'_i,
\]
so $\theta_g(l_i)=l_i$ for all $g\in G$ and hence $l_i\in\FF^\times$. Finally,
\[
\mu_i 1_{\cC'}=\psi(\mu_i 1_{\cC})=\psi(X_i^{o(a_i)})=\psi(X_i)^{o(a_i)}
=l_i^{o(a_i)}(X'_i)^{o(a_i)}=l_i^{o(a_i)}\mu'_i 1_{\cC'},
\]
which implies $\mu'_i\in\mu_i(\FF^\times)^{[o(a_i)]}$.
\end{proof} 

If $\beta$ is fixed, we can use a set of generators of $K$ adapted to $\beta$, namely, a ``symplectic basis''
$a_1,b_1,\ldots,a_m,b_m$ as in \eqref{eq:symplectic_basis}. Then the generators $X_i\bydef X_{a_i}$ and 
$Y_i\bydef X_{b_i}$ of $\cC(\LL,\theta,\beta,\mu)$ satisfy the following defining relations:
\begin{equation*}
\begin{split}
&X_i^{n_i}=\mu_i,\; Y_i^{n_i}=\nu_i,\; X_i Y_i=\zeta_i Y_i X_i,\\
&X_i X_j=X_j X_i,\; Y_i Y_j=Y_j Y_i,\text{ and } X_i Y_j=Y_j X_i\text{ for }i\neq j,
\end{split}
\end{equation*}
where $n_i=o(a_i)=o(b_i)$ and $\zeta_i=\beta(a_i,b_i)$ is a primitive root of unity of degree $n_i$ 
(which can be chosen arbitrarily at the expense of changing the ``symplectic basis'').
This implies that $\cC(\LL,\theta,\beta,\mu)$ is a tensor product of \emph{symbol algebras} (see e.g. \cite[p.~27]{KMRT}):
\begin{equation}\label{eq:symbol_alg}
\cC(\LL,\theta,\beta,\mu)\simeq (\mu_1,\nu_1)_{\zeta_1,\LL}\otimes_\LL\cdots\otimes_\LL (\mu_m,\nu_m)_{\zeta_m,\LL}.
\end{equation}
As seen above, the parameters $\mu_i$ and $\nu_i$ have to satisfy $\dc\mu_i=f_{a_i}^{n_i}$ and $\dc\nu_i=f_{b_i}^{n_i}$
in order to make $\cC(\LL,\theta,\beta,\mu)$ a $G$-Galois extension of $\FF$.


\section{Graded-division algebras}\label{se:gr-div}

Let $G$ be an abelian group. In this section, we classify $G$-graded-division algebras in terms of simpler objects. 

\subsection{Central simple case}
Let $\cD$ be a central simple $G$-graded-division algebra over $\FF$ and let $T$ be the support of $\cD$.
Then $\cD$ represents the class $[\cD]_T$ in $\Br_T(\FF)$ and also the class $[\cD]_G$ in $\Br_G(\FF)$. 
These classes correspond to each other under the canonical embedding of $\Br_T(\FF)$ into $\Br_G(\FF)$ 
(cf. Remark \ref{re:direct_limit}).

If $[\cD]=1$ in $\Br(\FF)$, then applying Corollary \ref{co:PP} with $T$ playing the role of $G$, we get 
$[\cD]_T=[\Gamma(\cD)\# \FF T]_T$, and $\Gamma(\cD)\simeq\cC^\op$ by Proposition \ref{prop:GCD} where 
$\cC=\Cent_\cD(\cD_e)$ with the action of $T$ given by equation \eqref{eq:sigma} (Miyashita-Ulbrich action). 
Hence there is a $T$-graded right $\cD$-module $V$ such that $\cC^\op\# \FF T$ is isomorphic, 
as a $T$-graded algebra, to $\End_\cD(V)$. Therefore, $\cD$ is graded-isomorphic 
to the algebra $E\bigl(\cC^\op\#\FF T\bigr)E$, where $E$ is any primitive idempotent of the identity component 
$\bigl(\cC^\op\#\FF T\bigr)_e=\cC^\op$. By Corollary \ref{co:main} (again, with $T$ playing the role of $G$), 
$\cC\simeq\cC(\LL,\theta,\xi)$ as a $T$-algebra (Definition \ref{df:modelsC}), so it has the form \eqref{eq:symbol_alg}. 
Thus, to recover $\cD$ from $\cC$ explicitly, one needs to find a primitive idempotent in a tensor product of symbol
algebras over $\LL$, which is difficult in general. 

If $[\cD]=[\Delta]$ in $\Br(\FF)$, for a central (ungraded) division algebra $\Delta$ (unique up to isomorphism), then 
Corollary \ref{co:PP} gives $[\cD]_T=[\Delta\otimes_\FF(\cC^\op\# \FF T)]_T$, and hence $\cD$ is recovered, 
up to graded isomorphism, as $E\bigl(\Delta\otimes_\FF(\cC^\op\# \FF T)\bigr)E$, where $E$ is any primitive idempotent 
of $\bigl(\Delta\otimes_\FF(\cC^\op\# \FF T)\bigr)_e=\Delta\otimes_\FF \cC^\op$. 

In particular, Corollary \ref{co:PP} and Proposition \ref{prop:GCD} give the following isomorphism criterion:
\begin{corollary}\label{co:PP2}
Let $G$ be an abelian group and let $\cD$ and $\cD'$ be finite-dimensional $G$-graded-division algebras  
with supports $T$ and $T'$. Assume that $\cD$ and $\cD'$ are central simple over $\FF$. 
Then $\cD$ and $\cD'$ are isomorphic as $G$-graded algebras if and only if 
the following conditions are satisfied:
\begin{romanenumerate}
\item $T=T'$;
\item $\Cent_\cD(\cD_e)$ and $\Cent_{\cD'}(\cD'_e)$ are isomorphic as $T$-algebras;
\item $[\cD]=[\cD']$ in $\Br(\FF)$.\qed
\end{romanenumerate}
\end{corollary}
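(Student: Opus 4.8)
The plan is to deduce everything from the group isomorphism of Corollary \ref{co:PP} (applied to the support rather than to $G$) together with Proposition \ref{prop:GCD}, and then to upgrade an equality of classes in the graded Brauer group to a graded isomorphism. I would dispose of the ``only if'' direction first, since it is immediate. Suppose $\psi:\cD\to\cD'$ is an isomorphism of $G$-graded algebras. Then $\psi(\cD_g)=\cD'_g$ for all $g\in G$, so the supports coincide, giving (i); being in particular an isomorphism of ungraded algebras, $\psi$ yields $[\cD]=[\cD']$ in $\Br(\FF)$, giving (iii). For (ii), $\psi$ maps $\cD_e$ onto $\cD'_e$ and hence restricts to an algebra isomorphism $\cC\bydef\Cent_\cD(\cD_e)\to\Cent_{\cD'}(\cD'_e)\defby\cC'$; choosing an invertible $u_t\in\cD_t$ (so that $\psi(u_t)\in\cD'_t$ is again invertible) and using the definition \eqref{eq:sigma} of the $T$-action, one checks $\psi\circ\sigma_t=\sigma'_t\circ\psi$ on $\cC$, so $\psi\vert_\cC$ is an isomorphism of $T$-algebras.

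For the substantive ``if'' direction, I would regard $\cD$ and $\cD'$ as $T$-graded algebras with full support $T$ (legitimate by (i)) and invoke the isomorphism of Corollary \ref{co:PP} with $T$ in place of $G$:
\[
\Br_T(\FF)\stackrel{\sim}{\longrightarrow}\Br(\FF)\times\uE_T(\FF),\qquad
[\cA]_T\mapsto\bigl([\cA],[\Gamma(\cA)]_{\textup{$T$-alg}}\bigr).
\]
By Proposition \ref{prop:GCD} we have $\Gamma(\cD)\simeq\cC^\op$ and $\Gamma(\cD')\simeq(\cC')^\op$ as $T$-algebras. Condition (ii) gives $\cC\simeq\cC'$ as $T$-algebras, hence $\cC^\op\simeq(\cC')^\op$, so the two second coordinates agree: $[\Gamma(\cD)]_{\textup{$T$-alg}}=[\Gamma(\cD')]_{\textup{$T$-alg}}$. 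Condition (iii) makes the first coordinates agree. Since the displayed map is injective, I conclude $[\cD]_T=[\cD']_T$ in $\Br_T(\FF)$.

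It then remains to pass from this equality of classes to an actual graded isomorphism, and this is the step I expect to be the main obstacle, since Corollary \ref{co:PP} only records an equivalence, not an isomorphism. Here I would appeal to the structure theory of $T$-graded-central-simple algebras (see \cite{NvO} or \cite[Chapter~2]{EKmon}): each class in $\Br_T(\FF)$ contains a graded-division algebra unique up to graded isomorphism, and two graded-central-simple algebras are equivalent if and only if their associated graded-division algebras are graded-isomorphic. As $\cD$ and $\cD'$ are themselves graded-division algebras representing the same class, they are graded-isomorphic as $T$-graded algebras; and because their common support is $T=T'$, a $T$-graded isomorphism is exactly a $G$-graded isomorphism, which completes the argument.
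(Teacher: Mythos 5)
Your proof is correct and is essentially the paper's own argument: the paper derives this corollary directly from Corollary \ref{co:PP} applied with $T$ in place of $G$ together with Proposition \ref{prop:GCD}, exactly as you do. The only step you make explicit that the paper leaves implicit is the passage from $[\cD]_T=[\cD']_T$ to a graded isomorphism via the uniqueness (up to graded isomorphism) of the graded-division algebra in each class of $\Br_T(\FF)$, which is part of the standard structure theory recalled in the introduction (\cite{NvO}, \cite[Chapter~2]{EKmon}), and you invoke it correctly.
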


\begin{remark}\label{re:De}
There is another division algebra associated to $\cD$, namely, the identity component $\cD_e$, which represents an 
element in $\Br(\LL)$ since $\LL=Z(\cD_e)$. This element can be recovered from $\Delta$ and $\cC$ as follows.
We have $\Delta\otimes_\FF(\cC^\op\# \FF T)\simeq\End_\cD(V)$ for a $T$-graded right $\cD$-module $V$.
Since $T$ is the support of $\cD$, $V$ is isomorphic to the direct sum of copies of $\cD$ or, in other words, 
$\FF^k\otimes_\FF\cD$ for some $k$. Hence $\Delta\otimes_\FF(\cC^\op\# \FF T)\simeq\Mat_k(\FF)\otimes_\FF\cD$, 
where the first factor has trivial grading. Looking at the identity components, we obtain 
$\Delta\otimes_\FF \cC^\op\simeq\Mat_k(\FF)\otimes_\FF\cD_e$. Since $\cC$ and $\cD_e$ are $\LL$-algebras,
this can be rewritten as $(\Delta\otimes_\FF\LL)\otimes_\LL\cC^\op\simeq\Mat_k(\LL)\otimes_\LL\cD_e$, so
\[
[\cD_e]=[\Delta\otimes_\FF \LL]\,[\cC]^{-1}\,\text{ in }\Br(\LL).
\]
A stronger result follows from \cite[Theorem 2.3]{BEK}:
\[
[\cD\otimes_\FF \LL]_G=[\cD_e]_G\,[\cC]_G\,\text{ in }\Br_G(\LL).
\]
\end{remark}

\subsection{General case}
Any $G$-graded-simple algebra $\cA$ is $G$-graded-central if considered as an algebra over the field $Z(\cA)_e$. 
Hence we may restrict to $G$-graded-division algebras that are graded-central over $\FF$.
These may be reduced to the central simple case, considered in Corollary \ref{co:PP2}, using a cocycle-twisted 
version of the loop algebra construction introduced in \cite[\S 5]{EldLoop}.

Any $2$-cocycle $\gamma:G\times G\to\FF^\times$ (with trivial action of $G$ on $\FF^\times$) can be used 
to twist the multiplication of a $G$-graded algebra $\cA$ as follows:
\[
x*y\bydef\gamma(g_1,g_2)xy\quad\forall x\in\cA_{g_1}, y\in\cA_{g_2},\, g_1,g_2\in G.
\]
We will denote by $\cA^\gamma$ the vector space $\cA$ equipped with this new multiplication and the original $G$-grading.
The isomorphism class of the $G$-graded algebra $\cA^\gamma$ depends only on the class $[\gamma]\in\Hc^2(G,\FF^\times)$.  

If the support of $\cA$ is contained in a subgroup $H$ of $G$, then the twist of $\cA$ as a $G$-graded algebra by a 
$2$-cocycle in $\Zc^2(G,\FF^\times)$ is the same as the twist of $\cA$ as an $H$-graded algebra by the restriction of 
this $2$-cocycle to $H$. A crucial fact is that the set of graded-isomorphism classes of the twists of $\cA$ by 
\emph{symmetric} $2$-cocycles does not depend on whether we regard $\cA$ as $G$-graded or $H$-graded. The reason is that
any symmetric $2$-cocycle on $H$ can be extended to a symmetric $2$-cocycle on $G$. Indeed, using once again the 
identification of $\Hc^2_{\textup{sym}}$ and $\Ext$ for abelian groups, we have the following exact sequence:
\begin{equation}\label{eq:inf_res_sym}
\begin{split}
1&\longrightarrow \Hom(G/H,\FF^\times)\longrightarrow \Hom(G,\FF^\times)\longrightarrow \Hom(H,\FF^\times)\\
&\longrightarrow \Hc^2_{\textup{sym}}(G/H,\FF^\times)\stackrel{\inf}\longrightarrow 
\Hc^2_{\textup{sym}}(G,\FF^\times)\stackrel{\res}\longrightarrow \Hc^2_{\textup{sym}}(H,\FF^\times)\longrightarrow 1.
\end{split}
\end{equation}
This fact was used in \cite{EldLoop} as follows.
If $\cA$ is a $G$-graded-central-simple algebra (not necessarily finite-dimensional) and 
$H$ is the support of the induced grading on the center(=centroid) $Z(\cA)$, then $Z(\cA)$ is, up to isomorphism, 
a twisted group algebra $\FF^{\tilde{\gamma}}H$ for some $\tilde{\gamma}\in\Zc^2_{\textup{sym}}(H,\FF^\times)$, 
so it can be ``untwisted'' by $\tilde{\gamma}^{-1}$. 
We can find $\gamma\in\Zc^2_{\textup{sym}}(G,\FF^\times)$ whose restriction to $H$ is $\tilde{\gamma}$, and hence
the center of $\cA^{\gamma^{-1}}$ is isomorphic to the group algebra $\FF H$. Following \cite{Allison_et_al}, 
we let $\pi:G\to \overline{G}\bydef G/H$ be the natural homomorphism and consider a \emph{central image} $\bar{\cA}$ of 
$\cA^{\gamma^{-1}}$ induced by an algebra homomorphism from the center to $\FF$. 
By \cite[Theorem 5.2]{EldLoop}, $\bar{\cA}$ is a central simple $\overline{G}$-graded algebra, and $\cA$ is isomorphic, 
as a $G$-graded algebra, to the $\gamma$-twist $L_\pi^\gamma(\bar{\cA})$ of the loop algebra $L_\pi(\bar{\cA})$ 
(see Subsection \ref{sse:intro_loop}).
Conversely, for any central simple $\overline{G}$-graded algebra $\cB$ and any 
$\gamma\in\Zc^2_{\textup{sym}}(G,\FF^\times)$, the cocycle-twisted loop algebra $L_\pi^\gamma(\cB)$ is a 
$G$-graded-central-simple algebra, with $L_\pi^\gamma(\cB)$ and $L_\pi^{\gamma'}(\cB')$ being isomorphic as 
$G$-graded algebras if and only if there exists $\alpha\in\Zc^2_{\textup{sym}}(\overline{G},\FF^\times)$ 
such that $[\gamma'] = \inf([\alpha]^{-1})[\gamma]$ in $\Hc^2_{\textup{sym}}(G,\FF^\times)$ and $\cB'\simeq\cB^\alpha$ as
$\overline{G}$-graded algebras. The first condition determines $[\alpha]$ up to a factor in the image of the connecting 
homomorphism $\Hom(H,\FF^\times)\to\Hc^2_{\textup{sym}}(\overline{G},\FF^\times)$ in \eqref{eq:inf_res_sym}, 
so we still have the freedom to twist $\cB$ by the elements of this image to satisfy the second condition 
(cf. \cite[Corollary~5.5]{EldLoop}).


We apply this procedure to obtain $G$-graded-central-division algebras from central simple 
$\overline{G}$-graded-division algebras. The following result describes the effect of a cocycle twist on the latter 
in terms of the centralizer of the identity component (which carries a natural action of the support of the grading) and 
the class in $\Br(\FF)$.

\begin{lemma}\label{lm:twistD}
Let $\overline{\cD}$ be a central simple $\overline{G}$-graded-division algebra with support $\overline{T}$ 
and let $\alpha\in\Zc^2_{\textup{sym}}(\overline{T},\FF^\times)$. Then the centralizer of the identity component in 
$\overline{\cD}^\alpha$ is the graded vector space $\overline{\cC}\bydef\Cent_{\overline{\cD}}(\overline{\cD}_{\bar{e}})$
with the same $\overline{T}$-action but with multiplication twisted by the restriction of $\alpha$ to the support 
$\overline{K}$ of $\overline{\cC}$. Moreover, if $\alpha$ is the inflation of some 
$\alpha'\in\Zc^2_{\textup{sym}}(\overline{T}/\overline{K},\FF^\times)$ then $[\overline{\cD}^\alpha]\in\Br(\FF)$ is 
the product of $[\overline{\cD}]$ and the element corresponding to the image of $[\alpha']$ 
under the homomorphism $\Hc^2(\overline{T}/\overline{K},\FF^\times)\to\Hc^2(\overline{T}/\overline{K},\LL^\times)$ 
induced by the inclusion of $\FF$ into $\LL\bydef Z(\overline{\cD}_{\bar{e}})$, which is a Galois field extension of $\FF$
with $\Gal(\LL/\FF)\simeq\overline{T}/\overline{K}$. 
\end{lemma}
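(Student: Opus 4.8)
The plan is to treat the two assertions separately. For the first, I would first normalize $\alpha$: since the isomorphism type of $\overline{\cD}^\alpha$ depends only on $[\alpha]$, I may assume $\alpha(\bar e,\cdot)=\alpha(\cdot,\bar e)=1$. Then the identity component of $\overline{\cD}^\alpha$ is literally $\overline{\cD}_{\bar e}$ with its original product, and for a homogeneous $z\in\overline{\cD}_g$ and $a\in\overline{\cD}_{\bar e}$ the twisted products are $z*a=za$ and $a*z=az$; hence $z$ centralizes $\overline{\cD}_{\bar e}$ for $*$ if and only if it does so in $\overline{\cD}$. This identifies the centralizer, as a graded subspace, with $\overline{\cC}$, and on $\overline{\cC}$ the $*$-product of homogeneous components of degrees $k_1,k_2\in\overline{K}$ differs from the old one by the factor $\alpha(k_1,k_2)$, i.e.\ it is $\overline{\cC}$ twisted by $\alpha|_{\overline{K}}$. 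The only real point is that the $\overline{T}$-action is unchanged: conjugating $c\in\overline{\cC}_k$ by an invertible $u_g\in\overline{\cD}_g$ inside $\overline{\cD}^\alpha$ (whose $*$-inverse is $\alpha(g,g^{-1})^{-1}u_g^{-1}$) produces the old conjugate $u_g c u_g^{-1}$ times a scalar which, after two applications of the cocycle identity, reduces to a ratio of the form $\alpha(k,g^{-1})\alpha(g^{-1},k)^{-1}$ — and this is $1$ precisely because $\alpha$ is symmetric. Thus the Miyashita--Ulbrich action $\sigma$ is the same map.

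For the second assertion I would exploit the crossed-product structure over $\Gamma:=\Gal(\LL/\FF)\simeq\overline{T}/\overline{K}$. Put $B:=\Cent_{\overline{\cD}}(\LL)$, which by Proposition \ref{pr:DCK} is central simple over $\LL$ and carries a semilinear $\Gamma$-action (the restriction of the $\overline{T}$-action) extending the Galois action on $\LL$. Choosing invertible homogeneous coset representatives $u_\gamma$, we have $\overline{\cD}=\bigoplus_{\gamma\in\Gamma}Bu_\gamma$, a crossed product with some factor set $f(\gamma,\delta)\in B^\times$. Because $\alpha=\inf(\alpha')$ depends only on the images in $\overline{T}/\overline{K}=\Gamma$, the first part shows that in $\overline{\cD}^\alpha$ both $B$ and the conjugation action are unchanged, while $u_\gamma*u_\delta=\alpha'(\gamma,\delta)f(\gamma,\delta)u_{\gamma\delta}$; hence $\overline{\cD}^\alpha$ is the crossed product of the same $B$ and $\Gamma$ with factor set $\alpha'\cdot f$.

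The heart of the matter is the additivity statement: multiplying the factor set of a crossed product $(B,\Gamma,f)$ by a central cocycle $\alpha'\in\Zc^2(\Gamma,\FF^\times)$ changes its class in $\Br(\FF)$ by the class of the classical crossed product $C:=(\LL/\FF,\alpha')=\bigoplus_\gamma\LL z_\gamma$. I would prove this by the diagonal-idempotent argument. Inside $A\otimes_\FF C$, with $A:=\overline{\cD}$, the two images of $\LL$ span $\LL\otimes_\FF\LL\simeq\prod_{\sigma\in\Gamma}\LL$; the elements $u_\gamma\otimes 1$ permute the primitive idempotents $\epsilon_\sigma$ transitively, so the $\epsilon_\sigma$ are pairwise equivalent and $A\otimes_\FF C\simeq\Mat_{|\Gamma|}(e(A\otimes_\FF C)e)$ for the diagonal idempotent $e$. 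Since $B\otimes 1$ commutes with $e$ (as $\LL=Z(B)$ is central and $1\otimes\LL$ lies in the other factor), one gets $e(B\otimes 1)e\simeq B$; moreover each $u_\gamma\otimes z_\gamma$ fixes $e$, and $e(u_\gamma\otimes z_\gamma)e\cdot e(u_\delta\otimes z_\delta)e=\alpha'(\gamma,\delta)f(\gamma,\delta)\,e(u_{\gamma\delta}\otimes z_{\gamma\delta})e$ with the same $\Gamma$-action on $B$. A dimension count ($\dim_\FF e(A\otimes_\FF C)e=\dim_\FF A$) then identifies $e(A\otimes_\FF C)e$ with $\overline{\cD}^\alpha$, giving $[\overline{\cD}^\alpha]=[\overline{\cD}]\,[C]$ in $\Br(\FF)$. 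Finally, by the classical crossed-product isomorphism $\Hc^2(\Gamma,\LL^\times)\xrightarrow{\sim}\Br(\LL/\FF)$, the class $[C]$ is exactly the image of $[\alpha']$ under $\Hc^2(\Gamma,\FF^\times)\to\Hc^2(\Gamma,\LL^\times)$ (an $\FF^\times$-valued cocycle is automatically a cocycle for the Galois action, as $\Gamma$ fixes $\FF^\times$), as claimed. I expect the main obstacle to be making this idempotent identification fully rigorous for the \emph{noncommutative} base $B$ — in particular verifying the asserted crossed-product presentation of $e(A\otimes_\FF C)e$ and its dimension — rather than any conceptual difficulty.
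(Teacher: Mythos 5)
Your proof is correct, and while your treatment of the first assertion is essentially the paper's (the paper disposes of it in one line: a \emph{symmetric} twist does not alter commutation relations between homogeneous elements, which is exactly what your normalization-and-cocycle computation verifies in detail), your proof of the Brauer-class assertion takes a genuinely different route. The paper never works with $\overline{\cD}$ directly: by Remark \ref{re:De} (i.e.\ the Picco--Platzeck correspondence of Corollary \ref{co:PP}) there is a graded isomorphism $\Delta\otimes_\FF(\overline{\cC}^\op\#\FF\overline{T})\simeq\Mat_k(\FF)\otimes_\FF\overline{\cD}$, and twisting it by $\alpha$ reduces the lemma to showing that $\cB\bydef(\overline{\cC}^\op\#\FF\overline{T})^{\alpha}$ and the classical crossed product $\cA\bydef\bigl(\LL\#\FF(\overline{T}/\overline{K})\bigr)^{\alpha'}$ have the same Brauer class; this is then proved by exhibiting, inside $\cE\bydef\tilde{\cA}\otimes_\FF\cB$ (with $\tilde{\cA}$ the crossed product for $(\alpha')^{-1}$), the idempotent $\varepsilon=\epsilon_{\id}\in\LL\otimes_\FF\LL$ and checking $\varepsilon\cE\varepsilon\simeq\overline{\cC}^\op\#\FF\overline{T}\simeq\End_\FF(\overline{\cC}^\op)$, so that $[\cE]=1$ is immediate. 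You instead present $\overline{\cD}$ as a generalized crossed product $\bigoplus_{\gamma}Bu_\gamma$ over the \emph{noncommutative} coefficient algebra $B=\Cent_{\overline{\cD}}(\LL)$ and realize $\overline{\cD}^\alpha$ itself as the corner $e(\overline{\cD}\otimes_\FF C)e$, where $C=(\LL/\FF,\alpha')$. Both arguments are, in the paper's words, ``variations of the classical argument'', driven by the same idempotent $\epsilon_{\id}$ and the same transitive permutation of the $\epsilon_\sigma$; the paper's smash-product detour buys a corner that is manifestly a matrix algebra over $\FF$, so Brauer-triviality is immediate and no injectivity question ever arises, while your direct route buys the cleaner identity $[\overline{\cD}^\alpha]=[\overline{\cD}][C]$ without invoking Remark \ref{re:De} or the division algebra $\Delta$ --- at the cost of the one point you flag, namely injectivity of the homomorphism $\overline{\cD}^\alpha\to e(\overline{\cD}\otimes_\FF C)e$ before the dimension count. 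That gap is routine to close in either of two ways: run the standard Dedekind-independence argument, using that the elements $v_\gamma\bydef(u_\gamma\otimes z_\gamma)e$ induce the distinct automorphisms $\gamma$ on $\iota(\LL)=Z\bigl(\iota(B)\bigr)$ where $\iota(b)\bydef(b\otimes 1)e$; or observe that $\overline{\cD}^\alpha$ is central simple over $\FF$ --- the symmetric twist preserves the commutation bicharacter $\beta$, so part (vi) of Proposition \ref{pr:Kbeta} applies --- whence any nonzero homomorphism from it is injective.
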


\begin{proof}
Since the $2$-cocycle $\alpha$ is symmetric, the $\alpha$-twist of multiplication on $\overline{\cD}$ does not affect
commutation relations between homogeneous elements, so the first assertion is clear.

The second assertion can be proved using a variation of the classical argument (see e.g. \cite[Theorem 4.4.3]{Her}) showing 
that multiplication in $\Hc^2(\overline{T}/\overline{K},\LL^\times)$ corresponds to multiplication in $\Br(\FF)$. 
The algebra $\cA\bydef\bigl(\LL\#\FF(\overline{T}/\overline{K})\bigr)^{\alpha'}$ represents the element of $\Br(\FF)$
corresponding to $\alpha'$ and the algebra 
$\tilde{\cA}\bydef\bigl(\LL\#\FF(\overline{T}/\overline{K})\bigr)^{(\alpha')^{-1}}$ 
represents the inverse of this element. 
Let $\Delta$ be a central division algebra that represents $[\overline{\cD}]$. Then 
$\Delta\otimes_\FF(\overline{\cC}^\op\# \FF\overline{T})\simeq\Mat_k(\FF)\otimes_\FF\overline{\cD}$ for some $k$, 
where the first factor has trivial grading (Remark \ref{re:De}). Therefore, it is sufficient to show that 
the algebra $\cB\bydef(\overline{\cC}^\op\# \FF\overline{T})^{\alpha}$ represents the same class as $\cA$ 
in $\Br(\FF)$. Consider $\cE\bydef\tilde{\cA}\otimes_\FF\cB$. Since $\LL\# 1$ is a subalgebra of $\tilde{\cA}$ isomorphic 
to $\LL$ and $\overline{\cC}^\op\# 1$ is a subalgebra of $\cB$ isomorphic to $\overline{\cC}^\op$, 
the algebra $\cE$ contains 
\[
\LL\otimes_\FF\LL\,=\bigoplus_{\varphi\in\Gal(\LL/\FF)}\LL\varepsilon_\varphi,
\]
where $\varepsilon_\varphi$ are orthogonal idempotents satisfying 
$(\varphi_1\otimes\varphi_2)(\varepsilon_\varphi)=\varepsilon_{\varphi_2\varphi\varphi_1^{-1}}$ for all 
$\varphi_1,\varphi_2\in\Gal(\LL/\FF)$. In particular, $\varepsilon\bydef\varepsilon_{\id}$ is a central idempotent 
of the subalgebra $(\LL\# 1)\otimes_\FF(\overline{\cC}^\op\# 1)$, in which it generates an ideal isomorphic to 
$\overline{\cC}^\op$, and we have   
\[ 
\varepsilon\bigl((1\#\FF(\overline{T}/\overline{K}))\otimes_\FF(1\#\FF\overline{T})\bigr)\varepsilon
=\varepsilon S,
\]
where $S$ is the span of all elements of the form $(1\#\bar{t}\overline{K})\otimes (1\#\bar{t})$, $\bar{t}\in\overline{T}$.
But when we multiply two such elements in $\cE$, the values of the cocycles $(\alpha')^{-1}$ and $\alpha$ cancel out, 
so $S$ is a subalgebra of $\cE$ isomorphic to $\FF\overline{T}$. It follows that
\[
\varepsilon\cE\varepsilon\simeq\overline{\cC}^\op\#\FF\overline{T}\simeq\End_\FF(\overline{\cC}^\op),
\]
hence the class of $\cE$ in $\Br(\FF)$ is trivial, proving the result.
\end{proof}

In terms of the isomorphism $\Br_{\overline{T}}(\FF)\to\Br(\FF)\times \uE_{\overline{T}}(\FF)$ 
as in Corollary \ref{co:PP}, sending $[\overline{\cD}]_{\overline{T}}$ to 
$\bigl([\overline{\cD}],[\overline{\cC}^\op]_{\textup{$\overline{T}$-alg}}\bigr)$, 
Lemma \ref{lm:twistD} says that twisting $\overline{\cD}$ by $\alpha$ has the following effect on the image of 
$[\overline{\cD}]_{\overline{T}}$: the second component is twisted by $\res(\alpha)$ and, if $\alpha=\inf(\alpha')$, 
the first component is multiplied by the image of $[\alpha']$ in $\Br(\FF)$. 

By Corollary \ref{co:main}, the isomorphism classes of simple $\overline{T}$-Galois extensions of $\FF$ are parametrized 
by the set $\mathfrak{T}_\FF(\overline{T})$, with a triple 
$(\LL,\theta,\tau\Bc^2(\overline{K},\FF^\times))\in\mathfrak{T}_\FF(\overline{T})$ corresponding to 
the isomorphism class of the $\overline{T}$-algebra $\cC(\LL,\theta,\tau)=\LL^\tau\overline{K}$ 
as in Definition \ref{df:modelsC} (with $\overline{T}$ playing the role of $G$ and $\overline{K}\bydef\ker\theta$).
Clearly, twisting $\cC(\LL,\theta,\tau)$ by a symmetric $2$-cocycle $\overline{K}\times\overline{K}\to\FF^\times$ 
multiplies $\tau$ by this cocycle, hence the isomorphism classes of these twists are parametrized by 
a coset of the subgroup $\Hc^2_{\textup{sym}}(\overline{K},\FF^\times)$ in 
$\Zc^2(\overline{K},\LL^\times)/\Bc^2(\overline{K},\FF^\times)$. As explained after Definition \ref{df:modelsC},  
the elements of such a coset represent the isomorphism classes of simple $\overline{T}$-Galois extensions of $\FF$ 
whose center is isomorphic to $(\LL,\theta)$ and commutation relations are given by a fixed 
nondegenerate alternating bicharacter $\bar{\beta}:\overline{K}\times\overline{K}\to\FF^\times$. 
Thus, $\Hc^2_{\textup{sym}}(\overline{K},\FF^\times)$ acts simply transitively on these isomorphism classes 
by the twist of multiplication. Moreover, up to isomorphism and symmetric cocycle twist, 
the simple $\overline{T}$-Galois extensions of $\FF$ are classified by the triples $(\LL,\theta,\bar{\beta})$. 

Now, for any $G$-graded-central-division algebra $\cD$ with support $T$ and support of the center $H$, 
we have $\cD\simeq L_\pi^\gamma(\overline{\cD})$ where $\overline{\cD}$ is a central simple 
$\overline{G}$-graded-division algebra with support $\overline{T}\bydef T/H$. Moreover, $\overline{\cD}$ 
can be replaced by any of its symmetric cocycle twists at the 
expense of changing $\gamma$.  
In view of Lemma \ref{lm:twistD}, the simple $\overline{T}$-Galois extension of $\FF$ corresponding to $\overline{\cD}$ 
can be replaced by any of its symmetric cocycle twists. Therefore, it makes sense to fix, for any triple  
$(\LL,\theta,\bar{\beta})$, a $2$-cocycle $\tau_0\in\Zc^2(\overline{K},\LL^\times)$ that satisfies 
$\bar{\beta}(\bar{k}_1,\bar{k}_2)=\tau_0(\bar{k}_1,\bar{k}_2)\tau_0(\bar{k}_2,\bar{k}_1)^{-1}$ and 
$f_{\bar{k}_1}f_{\bar{k}_2} = \dc\bigl(\tau_0(\bar{k}_1,\bar{k}_2)\bigr)f_{\bar{k}_1\bar{k}_2}$ for all 
$\bar{k}_1,\bar{k}_2\in \overline{K}$ (which exists by Lemma \ref{lm:tau_exists} and can be obtained explicitly using  
generators $a_i$ and scalars $\mu_i$ as in Definition \ref{df:modelsD} below). 
Then we let $\overline{\cC}\bydef\cC(\LL,\theta,\tau_0)$ as in Definition \ref{df:modelsC} and, 
for any central division algebra $\Delta$ over $\FF$, let $\cD(\overline{T},\LL,\theta,\bar{\beta},\Delta)$ 
be a central simple graded-division algebra whose isomorphism class corresponds to the pair 
$\bigl([\Delta], [\overline{\cC}^\op]_\textup{$\overline{T}$-alg}\bigr)\in \Br(\FF)\times \uE_{\overline{T}}(\FF)$. 
We will now give a more explicit construction of such an algebra:  

\begin{df}\label{df:modelsD}
For any finite subgroup $\overline{T}$ of $\overline{G}$ and any $(\LL,\theta)\in\mathfrak{Z}_\FF(\overline{T})$, 
write $\overline{K}\bydef\ker\theta$ as a direct product of cyclic subgroups generated by $\bar{a}_1,\ldots, \bar{a}_m$. 
Then, for any nondegenerate alternating bicharacter $\bar{\beta}:\overline{K}\times\overline{K}\to\FF^\times$, 
fix scalars $\mu_i\in\LL^\times$ satisfying $\dc\mu_i=f_{\bar{a}_i}^{o(\bar{a}_i)}$ and define a $\theta$-semilinear 
$\overline{T}$-action on the algebra $\overline{\cC}\bydef\cC(\LL,\theta,\bar{\beta},\mu)$ as in \eqref{eq:modelC_genrel},
i.e.,  
\[
\overline{\cC}=\alg_\LL\langle X_1,\ldots,X_m\mid X_iX_j=\bar{\beta}(\bar{a}_i,\bar{a}_j)X_jX_i
\text{ and }X_i^{o(\bar{a}_i)}=\mu_i 1\rangle,
\]
by setting $\bar{t}\cdot X_i=f_{\bar{a}_i}(\overline{t})X_i$ for all $\bar{t}\in\overline{T}$.
Finally, for any finite-dimensional central division algebra $\Delta$ over $\FF$, pick a primitive idempotent 
$E\in\Delta\otimes_\FF\overline{\cC}^\op$ and define
\[
\cD(\overline{T},\LL,\theta,\bar{\beta},\Delta)\bydef E\bigl(\Delta\otimes_\FF(\overline{\cC}^\op\# \FF\overline{T})\bigr)E
=\bigoplus_{\bar{t}\in\overline{T}}E\bigl(\Delta\otimes_\FF(\overline{\cC}^\op\# \bar{t})\bigr)E.
\]
This is a central simple $\overline{G}$-graded-division algebra (with support $\overline{T}$),
whose graded-isomorphism class does not depend on $E$, but depends on our (fixed) choice of generators $a_i$
and scalars $\mu_i$, $i=1,\ldots,m$.
\end{df}

\begin{theorem}\label{th:loop}
Let $G$ be an abelian group and let $\FF$ be a field.
\begin{enumerate}
\item[(1)] If $H$ is a finite subgroup of $G$, $\pi:G\to\overline{G}\bydef G/H$ is the natural homomorphism, 
$\overline{\cD}$ is a finite-dimensional central simple $\overline{G}$-graded-division algebra over $\FF$, 
and $\gamma:G\times G\to\FF^\times$ is a symmetric $2$-cocycle (with trivial action of $G$ on $\FF^\times$), 
then $L_\pi^\gamma(\overline{\cD})$ is a finite-dimensional $G$-graded-central-division algebra over $\FF$.
\item[(2)] If $\cD$ is a finite-dimensional $G$-graded-central-division algebra over $\FF$, $T$ is the support of $\cD$, 
and $H$ is the support of the center of $\cD$, then $\cD$ is graded-isomorphic to some 
$L_\pi^\gamma(\overline{\cD})$ where $\pi$ and $\gamma$ are as above and 
$\overline{\cD}=\cD(T/H,\LL,\theta,\bar{\beta},\Delta)$ is a central simple $\overline{G}$-graded-division algebra as in 
Definition \ref{df:modelsD}.
\item[(3)] Two algebras $L_\pi^\gamma\bigl(\cD(T/H,\LL,\theta,\bar{\beta},\Delta)\bigr)$ and 
$L_{\pi'}^{\gamma'}\bigl(\cD(T'/H',\LL',\theta',\bar{\beta}',\Delta')\bigr)$ as above are graded-isomorphic if and only if 
the following conditions hold:
\begin{enumerate}
\item[(i)] $T=T'$, $H=H'$, $\LL=\LL'$, $\theta=\theta'$, $\bar{\beta}=\bar{\beta}'$; 
\item[(ii)] There exists $\tilde{\alpha}\in\Zc^2_{\textup{sym}}(T/K,\FF^\times)$, where $K\bydef\pi^{-1}(\ker\theta)$, 
such that
\begin{itemize}
\item $\res([\gamma']) = \inf([\tilde{\alpha}]^{-1})\res([\gamma])$ in $\Hc^2_{\textup{sym}}(T,\FF^\times)$;
\item The class $[\Delta']$ in $\Br(\FF)$ is the product of $[\Delta]$ and the image of $[\tilde{\alpha}]$ 
under the homomorphism $\Hc^2_{\textup{sym}}(T/K,\FF^\times)\to\Hc^2(T/K,\LL^\times)$ induced by the inclusion $\FF\to\LL$,
where $\Hc^2(T/K,\LL^\times)$ is regarded as a subgroup of $\Br(\FF)$ using the identification 
$T/K\simeq\Gal(\LL/\FF)$ via $\theta$.
\end{itemize}
\end{enumerate}
\end{enumerate}
\end{theorem}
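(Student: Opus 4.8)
The plan is to assemble the statement from three inputs that are already available: the loop-algebra machinery of \cite{EldLoop} recalled just before Definition \ref{df:modelsD} (the structure theorem \cite[Theorem 5.2]{EldLoop}, its converse, and the isomorphism criterion \cite[Corollary 5.5]{EldLoop}), the classification of central simple graded-division algebras (Corollaries \ref{co:PP2} and \ref{co:main}), and the behaviour of symmetric cocycle twists (Lemma \ref{lm:twistD}).

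For part (1), I would first invoke the converse direction of \cite[Theorem 5.2]{EldLoop}, which already gives that $L_\pi^\gamma(\overline{\cD})$ is $G$-graded-central-simple; finite-dimensionality is automatic, since $\overline{T}$ and $H$ are finite, hence $T=\pi^{-1}(\overline{T})$ is finite and each homogeneous component $\overline{\cD}_{\pi(g)}\otimes g$ is finite-dimensional. It then remains to verify the graded-division property. A nonzero homogeneous element has the form $d\otimes g$ with $0\ne d\in\overline{\cD}_{\pi(g)}$; since $\overline{\cD}$ is a graded-division algebra, $d$ is invertible there, and a direct computation in the twisted multiplication shows that a suitable scalar multiple of $d^{-1}\otimes g^{-1}$ is a two-sided inverse of $d\otimes g$. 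Thus every nonzero homogeneous element is invertible.

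For part (2), I would apply \cite[Theorem 5.2]{EldLoop} to $\cD$ itself: with $H$ the support of $Z(\cD)$ and $\pi\colon G\to\overline{G}=G/H$, there exist a symmetric $\gamma$ and a central simple $\overline{G}$-graded algebra $\overline{\cD}$ with $\cD\simeq L_\pi^\gamma(\overline{\cD})$. The key extra point is that $\overline{\cD}$ is again a graded-division algebra: reversing the inverse computation of part (1), if $0\ne d\in\overline{\cD}_{\bar g}$ then, choosing $g\in\pi^{-1}(\bar g)\cap T$, the homogeneous element $d\otimes g$ of $\cD$ is invertible, which forces $d$ to be invertible in $\overline{\cD}$. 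The support of $\overline{\cD}$ is $\overline{T}=T/H$, so Corollary \ref{co:main} and the discussion preceding Definition \ref{df:modelsD} let me read off from $\overline{\cD}$ the data $\LL=Z(\overline{\cD}_{\bar e})$, the homomorphism $\theta$ induced by $\bar\sigma$, the nondegenerate bicharacter $\bar\beta$ on $\overline{K}=\ker\theta$, and a central division algebra $\Delta$ representing $[\overline{\cD}]\in\Br(\FF)$. By construction, $\cD(\overline{T},\LL,\theta,\bar\beta,\Delta)$ has the same support, the same centralizer of its identity component as a $\overline{T}$-algebra, and the same class in $\Br(\FF)$, so Corollary \ref{co:PP2} yields $\overline{\cD}\simeq\cD(\overline{T},\LL,\theta,\bar\beta,\Delta)$.

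For part (3), I would begin from the invariance of the support $T$ and of the support $H$ of the center, which forces $T=T'$, $H=H'$, and hence a common $\pi$ and $\overline{G}$. The criterion \cite[Corollary 5.5]{EldLoop} then gives $\cD\simeq\cD'$ iff there is $\alpha\in\Zc^2_{\textup{sym}}(\overline{G},\FF^\times)$ with $[\gamma']=\inf([\alpha]^{-1})[\gamma]$ in $\Hc^2_{\textup{sym}}(G,\FF^\times)$ and $\overline{\cD}'\simeq\overline{\cD}^\alpha$. I would analyse the latter with Corollary \ref{co:PP2} and Lemma \ref{lm:twistD}: twisting by the symmetric $\alpha|_{\overline{T}}$ leaves the support, the bicharacter $\bar\beta$, and the pair $(\LL,\theta)$ unchanged while multiplying the centralizer's $\overline{T}$-algebra class by $\res_{\overline{K}}(\alpha)$; since both $\overline{\cD}$ and $\overline{\cD}'$ are built from the same fixed $\tau_0$, matching the centralizers forces $[\res_{\overline{K}}(\alpha)]=1$, i.e.\ by \eqref{eq:inf_res_sym} that $[\alpha|_{\overline{T}}]=\inf([\tilde\alpha])$ for some $\tilde\alpha\in\Hc^2_{\textup{sym}}(\overline{T}/\overline{K},\FF^\times)\simeq\Hc^2_{\textup{sym}}(T/K,\FF^\times)$. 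For such $\alpha$, Lemma \ref{lm:twistD} turns the Brauer condition into $[\Delta']=[\Delta]\cdot(\text{image of }[\tilde\alpha])$, and the compatibility of inflation and restriction turns the $\gamma$-condition, after restricting to $T$, into $\res([\gamma'])=\inf([\tilde\alpha]^{-1})\res([\gamma])$ in $\Hc^2_{\textup{sym}}(T,\FF^\times)$; these are exactly (i) and (ii). I expect the main obstacle to be precisely this last translation: reconciling the criterion of \cite[Corollary 5.5]{EldLoop}, phrased over $G$ and $\overline{G}$, with conditions stated over $T$ and $T/K$. The resolution is that, the support of each algebra being $T$, the loop algebra depends only on $\res_T([\gamma])$, so the kernel of $\res\colon\Hc^2_{\textup{sym}}(G,\FF^\times)\to\Hc^2_{\textup{sym}}(T,\FF^\times)$ (the image of $\inf$ from $G/T$ by \eqref{eq:inf_res_sym}) may be absorbed into the choice of representative $\gamma'$ without altering $\cD'$, which is what permits the passage between the two formulations in both directions.
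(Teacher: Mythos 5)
There is a genuine gap in your part (2). Having obtained $\cD\simeq L_\pi^\gamma(\overline{\cD})$ from \cite[Theorem 5.2]{EldLoop}, you assert that the model $\cD(\overline{T},\LL,\theta,\bar{\beta},\Delta)$ has ``the same centralizer of its identity component as a $\overline{T}$-algebra'' as $\overline{\cD}$, so that Corollary \ref{co:PP2} yields $\overline{\cD}\simeq\cD(\overline{T},\LL,\theta,\bar{\beta},\Delta)$. This step fails: the triple $(\LL,\theta,\bar{\beta})$ does \emph{not} determine the isomorphism class of a simple $\overline{T}$-Galois extension. By Corollary \ref{co:main} the complete invariant is $(\LL,\theta,\xi)$ with $\xi\in\Zc^2(\overline{K},\LL^\times)/\Bc^2(\overline{K},\FF^\times)$, and the isomorphism classes sharing a fixed $(\LL,\theta,\bar{\beta})$ form a torsor under $\Hc^2_{\textup{sym}}(\overline{K},\FF^\times)\simeq\Ext(\overline{K},\FF^\times)$, which is nontrivial in general. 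The model is built from the \emph{fixed} orbit representative $\tau_0$, whereas $\Cent_{\overline{\cD}}(\overline{\cD}_{\bar{e}})$ may land anywhere in that orbit; and no choice of $\Delta$ can compensate, because under Corollary \ref{co:PP} the Brauer component and the Galois-extension component are independent. So in general $\overline{\cD}$ is graded-isomorphic to no model algebra at all, and your intermediate claim is false.

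The missing idea --- and this is exactly what the paper's phrase ``(2) has already been proved'' refers to, namely the discussion preceding Definition \ref{df:modelsD} --- is that $\overline{\cD}$ must first be replaced by a symmetric cocycle twist $\overline{\cD}^{\alpha}$, with $\alpha\in\Zc^2_{\textup{sym}}(\overline{T},\FF^\times)$ chosen (possible because the restriction map in \eqref{eq:inf_res_sym} is onto) so that, by Lemma \ref{lm:twistD}, the associated Galois extension moves onto the fixed representative $\cC(\LL,\theta,\tau_0)$; the twist is then absorbed into the cocycle, replacing $[\gamma]$ by $\inf([\alpha]^{-1})[\gamma]$, which does not change the graded-isomorphism class of the loop algebra, and $\Delta$ is taken to represent the Brauer class of the \emph{twisted} algebra. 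Only after this adjustment does Corollary \ref{co:PP2} apply. Ironically, your part (3) relies on precisely this orbit-representative property of the models (to force $[\res_{\overline{K}}(\alpha)]=1$), which is what your part (2) overlooks. Parts (1) and (3) themselves are essentially correct and follow the paper's route; the only legitimate deviation in (3) is that you invoke \cite[Corollary 5.5]{EldLoop} at the level of $G$ and $\overline{G}$ and then pass to $T$ and $T/K$ by noting that the loop algebra only sees $\res_T([\gamma])$, whereas the paper applies \cite[Theorem 5.2]{EldLoop} directly with $T$ in place of $G$.
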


\begin{proof}
(1) is clear from \cite[Theorem 5.2]{EldLoop} and (2) has already been proved. 

To prove (3), let  $\overline{\cD}=\cD(T/H,\LL,\theta,\bar{\beta},\Delta)$ and 
$\overline{\cD}'=\cD(T'/H',\LL',\theta',\bar{\beta}',\Delta')$.
First note that conditions $T=T'$ and $H=H'$ are necessary for $L_\pi^\gamma(\overline{\cD})$ and 
$L_{\pi'}^{\gamma'}(\overline{\cD}')$ to be graded-isomorphic, so we may consider these algebras as $T$-graded 
and apply \cite[Theorem 5.2]{EldLoop} with $T$ playing the role of $G$. 
Then the remaining condition for graded isomorphism is the existence of 
$\alpha\in\Zc^2_{\textup{sym}}(\overline{T},\FF^\times)$ such that $\res([\gamma']) = \inf([\alpha]^{-1})\res([\gamma])$ in 
$\Hc^2_{\textup{sym}}(T,\FF^\times)$ and $\overline{\cD}'\simeq\overline{\cD}^\alpha$ as $\overline{T}$-graded algebras,
where $\overline{T}\bydef T/H$.
We have already seen that $\overline{\cD}'\simeq\overline{\cD}^\alpha$ implies $\LL'=\LL$, $\theta'=\theta$ and 
$\bar{\beta}'=\bar{\beta}$. Moreover, the restriction of $\alpha$ to $\overline{K}\bydef K/H$ must give the trivial element
of $\Hc^2_{\textup{sym}}(\overline{K},\FF^\times)$, since the graded-division algebras in Definition \ref{df:modelsD}
were constructed from representatives of orbits for the free action of $\Hc^2_{\textup{sym}}(\overline{K},\FF^\times)$ on 
the isomorphism classes of simple $\overline{T}$-Galois extensions of $\FF$.
Now, since $\Hc^2_{\textup{sym}}(\,\cdot\,,\FF^\times)$ is a functor, we have the following commutative diagram:
\[
\begin{tikzcd}
\Hc^2_{\textup{sym}}(\overline{T}/\overline{K},\FF^\times)\arrow[r, "\inf"]\arrow[d, "\sim"]
&\Hc^2_{\textup{sym}}(\overline{T},\FF^\times)\arrow[r, "\res"]\arrow[d, "\inf"]
&\Hc^2_{\textup{sym}}(\overline{K},\FF^\times)\arrow[r]\arrow[d, "\inf"]&1\\
\Hc^2_{\textup{sym}}(T/K,\FF^\times)\arrow[r, "\inf"]
&\Hc^2_{\textup{sym}}(T,\FF^\times)\arrow[r, "\res"]
&\Hc^2_{\textup{sym}}(K,\FF^\times)\arrow[r]&1
\end{tikzcd}
\]
where the rows are exact because they are the second halves of sequences similar to \eqref{eq:inf_res_sym}. 
Hence we have $[\alpha]=\inf([\alpha'])$ for some $\alpha'\in\Zc^2_{\textup{sym}}(\overline{T}/\overline{K},\FF^\times)$.
Let $\tilde{\alpha}$ be the element of $\Zc^2_{\textup{sym}}(T/K,\FF^\times)$ corresponding to $\alpha'$. Then, 
in view of the above diagram, equation $\res([\gamma']) = \inf([\alpha]^{-1})\res([\gamma])$ in 
$\Hc^2_{\textup{sym}}(T,\FF^\times)$ is equivalent to the first part of (ii), 
while, in view of Corollary \ref{co:PP2} and Lemma \ref{lm:twistD}, condition 
$\overline{\cD}'\simeq\overline{\cD}^\alpha$ is equivalent to the second part of (ii).
\end{proof}

\begin{corollary}\label{co:loop}
The set of isomorphism classes of finite-dimensional $G$-graded-central-division algebras over $\FF$ is in bijection
with the following set of septuples $(T,H,\LL,\theta,\bar{\beta},\delta,[\eta])$:
\begin{itemize}
\item $H\le T$ are finite subgroups of $G$; 
\item $\LL$ is a finite Galois extension of $\FF$ contained in $\overline{\FF}$ and $\theta$ is an epimorphism 
$T/H\to\Gal(\LL/\FF)$ such that $|\Hc^1(T/H,\LL^\times)|=|K/H|$ where $K/H$ is the kernel of $\theta$
(so $\theta$ yields an isomorphism $T/K\to\Gal(\LL/\FF)$);
\item $\bar{\beta}$ is a nondegenerate alternating bicharacter on $K/H$ with values in $\FF^\times$;
\item $\delta$ is a coset in $\Br(\FF)$ of the image of $\Hom(K,\FF^\times)$ under the composition of the 
connecting homomorphism $\Hom(K,\FF^\times)\to\Hc^2_{\textup{sym}}(T/K,\FF^\times)$ and the homomorphism
$\Hc^2_{\textup{sym}}(T/K,\FF^\times)\to\Hc^2(T/K,\LL^\times)$ induced by the inclusion $\FF\to\LL$;
\item $\eta\in\Zc^2_{\textup{sym}}(K,\FF^\times)$.
\end{itemize}
\end{corollary}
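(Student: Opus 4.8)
The plan is to read the classification off Theorem~\ref{th:loop} and then repackage the remaining cohomological freedom by means of the exact sequence \eqref{eq:inf_res_sym}. First I would isolate the \emph{rigid} invariants. Given a finite-dimensional $G$-graded-central-division algebra $\cD$, Theorem~\ref{th:loop}(2) provides its support $T$, the support $H$ of its center, and a central simple $\overline{G}$-graded-division algebra $\overline{\cD}=\cD(T/H,\LL,\theta,\bar{\beta},\Delta)$ as in Definition~\ref{df:modelsD} with $\cD\simeq L_\pi^\gamma(\overline{\cD})$; here $(\LL,\theta)\in\mathfrak{Z}_\FF(T/H)$ and $\bar{\beta}$ is a nondegenerate alternating bicharacter on $\overline{K}\bydef K/H=\ker\theta$, which is exactly what the first three bullets require (so in particular $T/K\simeq\Gal(\LL/\FF)$ via $\theta$, where $K\bydef\pi^{-1}(\ker\theta)$). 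By Theorem~\ref{th:loop}(3)(i) the quintuple $(T,H,\LL,\theta,\bar{\beta})$ depends only on the graded-isomorphism class of $\cD$, so the set of classes is partitioned according to this quintuple, and it remains, for each admissible quintuple, to biject the classes sharing it with the pairs $(\delta,[\eta])$.

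Next I would identify this set of classes with an orbit space. Applying \eqref{eq:inf_res_sym} to $T\le G$ shows that $\res\colon\Hc^2_{\textup{sym}}(G,\FF^\times)\to\Hc^2_{\textup{sym}}(T,\FF^\times)$ is surjective, so only $x\bydef\res([\gamma])$ matters and it ranges over all of $\Hc^2_{\textup{sym}}(T,\FF^\times)$; together with $[\Delta]\in\Br(\FF)$ this gives a parameter space $\Hc^2_{\textup{sym}}(T,\FF^\times)\times\Br(\FF)$. By Theorem~\ref{th:loop}(3)(ii), two algebras with the same rigid data are graded-isomorphic precisely when their pairs lie in one orbit of the group $A\bydef\Hc^2_{\textup{sym}}(T/K,\FF^\times)$ acting by $[\tilde{\alpha}]\cdot(x,[\Delta])=\bigl(\inf([\tilde{\alpha}])^{-1}x,\ [\Delta]\,j([\tilde{\alpha}])\bigr)$, where $j\colon A\to\Hc^2(T/K,\LL^\times)\le\Br(\FF)$ is induced by $\FF\hookrightarrow\LL$. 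Thus the classes I want are exactly these orbits, and the content of the corollary becomes a set bijection of the orbit space with $\Hc^2_{\textup{sym}}(K,\FF^\times)\times\bigl(\Br(\FF)/j(\im\partial)\bigr)$, where $\partial\colon\Hom(K,\FF^\times)\to A$ is the connecting homomorphism of \eqref{eq:inf_res_sym} for $K\le T$.

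To carry out the orbit analysis I would feed in \eqref{eq:inf_res_sym} for $K\le T$, which yields: $\res_K\colon\Hc^2_{\textup{sym}}(T,\FF^\times)\to\Hc^2_{\textup{sym}}(K,\FF^\times)$ is surjective with kernel $\inf(A)$, and $\ker\bigl(\inf\colon A\to\Hc^2_{\textup{sym}}(T,\FF^\times)\bigr)=\im\partial$. The class $[\eta]\bydef\res_K(x)$ is fixed by the action (as $\res_K\circ\inf=0$) and sweeps out all of $\Hc^2_{\textup{sym}}(K,\FF^\times)$; this is the last entry of the septuple. For $\delta$, I would fix once and for all a set-theoretic section of $\res_K$ and, inside a given orbit, normalize $x$ to the chosen representative $x_0([\eta])$. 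Since the action moves $x$ transitively through the coset $x\cdot\inf(A)$, this normalization is possible, and the subgroup of $A$ stabilizing $x_0$ is precisely $\ker(\inf)=\im\partial$; hence the residual freedom acting on $[\Delta]$ is exactly multiplication by $j(\im\partial)$. Therefore $\delta\bydef[\Delta]\,j(\im\partial)\in\Br(\FF)/j(\im\partial)$ is a well-defined invariant of the orbit, the pair $([\eta],\delta)$ recovers the orbit, and both components range freely, giving the claimed bijection with the septuples $(T,H,\LL,\theta,\bar{\beta},\delta,[\eta])$.

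The main obstacle is this last step: the action couples the cohomology parameter $x$ with the Brauer parameter $[\Delta]$, so neither the image of $x$ in $\Hc^2_{\textup{sym}}(K,\FF^\times)$ nor the image of $[\Delta]$ in $\Br(\FF)/j(A)$ is by itself a complete invariant. The delicate point is that normalizing $x$ consumes the freedom of $A$ down to $\ker(\inf)=\im\partial$, so the correct quotient for the Brauer part is $\Br(\FF)/j(\im\partial)$ rather than $\Br(\FF)/j(A)$. Concretely, one checks that the orbit space fits in a (generally non-split) extension $1\to\Br(\FF)/j(\im\partial)\to M/N\to\Hc^2_{\textup{sym}}(K,\FF^\times)\to1$, which is nevertheless a product as a set; the section chosen above realizes this product structure and furnishes the desired bijection.
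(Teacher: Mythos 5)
Your proposal is correct and takes essentially the same approach as the paper's proof: both start from Theorem \ref{th:loop}, treat $(T,H,\LL,\theta,\bar{\beta})$ as rigid invariants, use the exact sequence \eqref{eq:inf_res_sym} for $K\le T$ to identify $[\eta]=\res_K([\gamma])$ as the complete invariant of the cocycle part, normalize $\gamma$ by a fixed set-theoretic section of the restriction map, and conclude that the residual freedom on the Brauer class is exactly $\ker(\inf)=\im\partial$, so that $\delta$ is a well-defined coset of the image of $\Hom(K,\FF^\times)$. Your explicit orbit-space formulation (the action of $\Hc^2_{\textup{sym}}(T/K,\FF^\times)$ on $\Hc^2_{\textup{sym}}(T,\FF^\times)\times\Br(\FF)$) is a convenient repackaging of the paper's terser phrasing, not a genuinely different argument.
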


\begin{proof}
In the notation of Theorem \ref{th:loop}, we let $\delta$ be the coset containing $[\Delta]$ and let 
$\eta$ be the restriction of $\gamma$ to $K$.
Observe that part (3) gives us a lot of freedom in the choice of $\gamma$ for a given $\cD$. 
Indeed, $\gamma$ and $\gamma'$ satisfy the first part of condition (ii) for some $\tilde{\alpha}$ if and only if 
$\res_K([\gamma])=\res_K([\gamma'])$, which means that we can replace $\gamma$ by an arbitrary extension of $\eta$. 
So we fix, for each subgroup $K$ of $G$, a set-theoretic section
\[
\xi_K:\Hc^2_{\textup{sym}}(K,\FF^\times)\rightarrow\Hc^2_{\textup{sym}}(G,\FF^\times)
\]
of the restriction homomorphism and stipulate that $\gamma$ for realizing $\cD$ as $L_\pi^\gamma(\overline{\cD})$ 
be chosen according to this section: $[\gamma]=\xi_K\bigl(\res_K([\gamma])\bigr)$. Under this stipulation, $\tilde{\alpha}$
in condition (ii) must belong to the kernel of 
$\inf:\Hc^2_{\textup{sym}}(T/K,\FF^\times)\to\Hc^2_{\textup{sym}}(T,\FF^\times)$, which is the image of the 
connecting homomorphism above.
\end{proof}

The parameters corresponding to the isomorphism class of a graded-central-division algebra $\cD$ in 
Corollary \ref{co:loop} have the following meaning: $T$ is the support of $\cD$, $H$ is the support of $Z(\cD)$,
$\LL\simeq Z(\cD_e)$, $\theta$ is induced by the epimorphism $\bar{\sigma}:T\to\Gal(\LL/\FF)$ 
(see Proposition \ref{pr:DCK}, with $T$ playing the role of $G$), $K$ is the kernel of $\bar{\sigma}$ and the 
support of $\cC\bydef\Cent_\cD(\cD_e)$, and $\bar{\beta}$ is induced by the alternating bicharacter 
$\beta:K\times K\to\FF^\times$ defined by the commutation relations in $\cC$ (see Proposition \ref{pr:Kbeta}).
The meaning of $\delta$ and $[\eta]$ is less direct and depends on the choices we made in Definition \ref{df:modelsD} and 
the proof of Corollary \ref{co:loop}: $\eta$ is a symmetric $2$-cocycle such that $\cC\simeq L_\pi^\eta(\overline{\cC})$ 
where $\overline{\cC}\bydef\cC(\LL,\theta,\bar{\beta},\mu)$, and $\delta$ is the coset containing the element 
$[\overline{\cD}]\in\Br(\FF)$ where $\overline{\cD}$ is a central image of $\cD^{\gamma^{-1}}$ and 
$[\gamma]=\xi_K([\eta])$.

Finally, we note that all data in Corollary \ref{co:loop} involving symmetric $2$-cocycles can be computed explicitly. 
For the connecting homomorphism $\Hom(K,\FF^\times)\to\Hc^2_{\textup{sym}}(T/K,\FF^\times)$, 
take a set-theoretic section $s_K:T/K\to T$ of the natural homomorphism $T\to T/K$ and, 
for any $\lambda\in\Hom(K,\FF^\times)$, define 
\begin{equation*}
\alpha_\lambda(x_1,x_2)\bydef \lambda\bigl(s_H(x_1)s_H(x_2)s_H(x_1 x_2)^{-1}\bigr)\quad
\forall x_1,x_2\in T/K.
\end{equation*}
Then the class $[\alpha_\lambda]\in\Hc^2_{\textup{sym}}(T/K,\FF^\times)$ is the image of $\lambda$ under 
the connecting homomorphism \cite[III, Lemma 1.4 and Theorem 9.1]{MacLane}.

As to symmetric $2$-cocycles $K\times K\to\FF^\times$ and their extensions, they can be found using the short exact sequence 
$1\to\FF^\times\to\overline{\FF}^\times\to\overline{\FF}^\times/\FF^\times\to 1$ as an injective resolution of $\FF^\times$.
Hence $\Hc^2_{\textup{sym}}(K,\FF^\times)$ is isomorphic to the quotient of 
$\Hom(K,\overline{\FF}^\times\hspace{-3pt}/\FF^\times)$ by the image of $\Hom(K,\overline{\FF}^\times)$. 
Under this isomorphism, the coset of $\chi:K\to\overline{\FF}^\times\hspace{-3pt}/\FF^\times$ corresponds 
to the class of the following symmetric $2$-cocycle:
\begin{equation}\label{eq:Z2sym_from_inj_res}
\eta_\chi(k_1,k_2)\bydef s_\FF(\chi(k_1))s_\FF(\chi(k_2))s_\FF(\chi(k_1 k_2))^{-1}\quad
\forall k_1,k_2\in K,
\end{equation}
where $s_\FF$ is a set-theoretic section of the natural homomorphism 
$\overline{\FF}^\times\to\overline{\FF}^\times\hspace{-3pt}/\FF^\times$. Extending $\chi$ to a homomorphism 
$\tilde{\chi}:G\to\overline{\FF}^\times\hspace{-3pt}/\FF^\times$, we obtain an extension 
$\gamma\bydef\eta_{\tilde{\chi}}$ of the $2$-cocycle $\eta_\chi$. 

\begin{remark}\label{re:twisted_loop_as_form}
We can realize $L_\pi^\gamma(\overline{\cD})$ as an $\FF$-form of 
the $G$-graded algebra $L_\pi(\overline{\cD})\otimes_\FF\overline{\FF}$ as follows. Setting 
$z_g\bydef s_\FF(\tilde{\chi}(g))\in\overline{\FF}^\times$ for all $g\in G$, we have
\[
L_\pi^\gamma(\overline{\cD})\simeq\bigoplus_{g\in G}\overline{\cD}_{\pi(g)}\otimes g\otimes z_g\subset
\overline{\cD}\otimes_\FF\overline{\FF}G,
\]
where we have identified $\FF G\otimes_\FF\overline{\FF}$ with $\overline{\FF}G$. 
(Cf. \cite[Proposition 3.5(iii)]{EldLoop}.)
\end{remark}

\subsection{Finite graded-division rings}

As an application of the method developed in this section, we can classify finite graded-division rings. 
Let $\cD=\bigoplus_{g\in G}\cD_g$ be a finite graded-division ring where $G$ is an abelian group. 
Then $\cD$ can be considered as a graded algebra over the finite field $\FF\bydef Z(\cD)_e$.
When viewed in this way, $\cD$ is a finite-dimensional graded-central-division algebra, so Theorem \ref{th:loop} applies.
Our classification will be up to graded-isomorphism, so we will fix $G$. Clearly, we can also fix the isomorphism class of
$\FF$ and consider finite-dimensional graded-central-division algebras over $\FF$. 
Note, however, that two such objects $\cD$ and $\cD'$ may be isomorphic as graded rings without being isomorphic
as graded $\FF$-algebras. More precisely, $\cD$ is isomorphic to  $\cD'$ as a graded ring if and only if there exists
an automorphism $\psi$ of $\FF$ such that $\cD^\psi$ (the result of pulling scalar multiplication back along $\psi$) is 
isomorphic to $\cD'$ as a graded $\FF$-algebra. 

It will be sometimes convenient to assume that $G$ is the support of the grading, hence finite.
Since $\FF$ is also finite, there is a number of simplifications. First, $\Br(\FF)$ is trivial and, hence, 
the mapping $\cD\mapsto\Cent_\cD(\cD_e)$ yields a bijection between the isomorphism
classes of central simple graded-division $\FF$-algebras with support $G$ and the isomorphism classes 
of simple $G$-Galois extensions of $\FF$. (In terms of Corollary \ref{co:loop}, the parameter $\delta$ is trivial.)
Second, the multiplicative groups of finite fields and the Galois groups of their finite extensions are cyclic,
which allows explicit computations.

Denote the characteristic of $\FF$ by $p$ and let $GF(p^\infty)$ be an algebraic closure of the prime field $GF(p)$.
(It can be constructed as a direct limit of the fields $GF(p^k)$, $k\in\NN$, hence our notation.) 
We may assume that $GF(p)\subset\FF\subset GF(p^\infty)$. It is well known and easy to prove that the multiplicative group
$GF(p^\infty)^\times$ is isomorphic to a subgroup of the unit circle $U\subset\CC$, namely, 
the direct sum of $U_{q^\infty}$ over all primes $q\ne p$, where $U_{q^\infty}$ denotes the group of all complex roots 
of unity of degrees $q^k$, $k\in\NN$. We will fix such an isomorphism and, for any $N\in\NN$ with $p\nmid N$, 
let $\omega_N$ be the element of $GF(p^\infty)^\times$ corresponding to $\exp\frac{2\pi\boldsymbol{i}}{N}$.
Thus, $\omega_N$ is a primitive $N$-th root of unity in $GF(p^\infty)$ and we have $\omega_N^d=\omega_{N/d}$ for any 
$d\mid N$. It follows that the multiplicative group $\FF^\times$ is generated by $\omega_{|\FF^\times|}$, which we will 
abbreviate as $\omega_\FF$.

Denote by $\varphi$ the Frobenius automorphism of $GF(p^\infty)$: $\varphi(x)=x^p$. By abuse of notation, we will use 
the same letter for the restrictions of $\varphi$ to finite subfields of $GF(p^\infty)$. Thus, for any such subfield 
$\LL$ containing $\FF$, the Galois group $\Gal(\LL/\FF)$ is generated by $\varphi^e$ where $|\FF|=p^e$. It follows that 
the norm $N_{\LL/\FF}:\LL\to\FF$ is given by $N_{\LL/\FF}(x)=x^{[\LL^\times:\FF^\times]}$. Hence the mapping 
$\omega_\FF^j\mapsto\omega_\LL^j$ ($0\le j<|\FF^\times|$) is a set-theoretic section of the group epimorphism 
$N_{\LL/\FF}:\LL^\times\to\FF^\times$.

The above information gives us an explicit description of the set $\mathfrak{Z}_\FF(G)$: 
it is in bijection with the cosets $C$ of the subgroups $K\le G$ satisfying the following conditions: 
$G/K$ is a cyclic group generated by $C$, $K\simeq A\times A$ for some abelian group $A$, 
and $\exp(K)$ divides $|\FF^\times|=p^e-1$.
Indeed, $\LL$ is determined by the degree $n\bydef[\LL:\FF]=[G:K]$ and $\theta:G\to\Gal(\LL/\FF)$ is determined 
by its kernel $K$ and the stipulation that it maps the elements of $C$ to $\varphi^e$. 
Next, we have to fix an extension of every character $\lambda:K\to\FF^\times$ to a $1$-cocycle $G\to\LL^\times$.
These extensions exist because $\Br(\LL)=1$ and hence the ``transgression'' map is trivial. Explicitly, they are given
by the following lemma, whose proof is left to the reader:

\begin{lemma}\label{lm:extend_to_1cocycles}
Suppose $\theta(t_0)=\varphi^e$ and $\lambda\in\Hom(K,\FF^\times)$. Then the extensions of $\lambda$ to $1$-cocycles
$G\to\LL^\times$ are as follows: for any $\mu_0\in\LL^\times$ satisfying $N_{\LL/\FF}(\mu_0)=\lambda(t_0^n)$, there 
exists a unique extension that sends $t_0$ to $\mu_0$.\qed
\end{lemma}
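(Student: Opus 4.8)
The plan is to use the cyclic structure of $\Gal(\LL/\FF)\simeq G/K$ to reduce a $1$-cocycle to a single value. Recall that a $1$-cocycle $f\in\Zc^1(G,\LL^\times)$, for the action of $g\in G$ on $\LL^\times$ through $\theta_g\in\Gal(\LL/\FF)$, satisfies $f(g_1g_2)=f(g_1)\theta_{g_1}\bigl(f(g_2)\bigr)$. Since $K=\ker\theta$ acts trivially, the restriction $f\vert_K$ is automatically a homomorphism $K\to\LL^\times$, so the requirement $f\vert_K=\lambda$ is meaningful. Writing $n\bydef[\LL:\FF]=[G:K]$, the image of $t_0$ generates the cyclic group $\Gal(\LL/\FF)$ of order $n$; hence $t_0^n\in K$, the powers $t_0^0,\ldots,t_0^{n-1}$ lie in distinct cosets of $K$, and every $g\in G$ can be written uniquely as $g=t_0^i k$ with $0\le i<n$ and $k\in K$. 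Consequently, once $\lambda$ is fixed, any extension of $\lambda$ to a $1$-cocycle is completely determined by the single value $\mu_0\bydef f(t_0)$, which already yields the uniqueness assertion.

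Next I would extract the necessary condition on $\mu_0$. Using $\theta_{t_0}=\varphi^e$ and induction on $i$, the cocycle relation gives $f(t_0^i)=\prod_{j=0}^{i-1}\varphi^{ej}(\mu_0)$, and then $f(t_0^i k)=\bigl(\prod_{j=0}^{i-1}\varphi^{ej}(\mu_0)\bigr)\lambda(k)$, where I use that $\lambda(k)\in\FF^\times$ is fixed by every power of $\varphi^e$. Evaluating at $i=n$ in two ways produces the key constraint: on one hand $t_0^n\in K$ forces $f(t_0^n)=\lambda(t_0^n)$, while on the other hand the formula gives $f(t_0^n)=\prod_{j=0}^{n-1}\varphi^{ej}(\mu_0)=N_{\LL/\FF}(\mu_0)$. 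Hence the norm condition $N_{\LL/\FF}(\mu_0)=\lambda(t_0^n)$ is necessary.

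Conversely, given $\mu_0\in\LL^\times$ with $N_{\LL/\FF}(\mu_0)=\lambda(t_0^n)$, I would \emph{define} $f$ on each $g=t_0^i k$ ($0\le i<n$, $k\in K$) by the formula above and verify directly that $f(gg')=f(g)\theta_g\bigl(f(g')\bigr)$ for all $g=t_0^i k$, $g'=t_0^{i'}k'$. When $i+i'<n$ this is immediate; the only delicate case --- and the crux of the argument --- is the wrap-around $i+i'\ge n$, where one writes $t_0^{i+i'}=t_0^n t_0^{r}$ with $r=i+i'-n$ and $t_0^n\in K$. There the product $\prod_{j=0}^{i+i'-1}\varphi^{ej}(\mu_0)$ splits off exactly one full norm $N_{\LL/\FF}(\mu_0)=\prod_{j=0}^{n-1}\varphi^{ej}(\mu_0)$ (using that $\varphi^e$ has order $n$ on $\LL$), and matching this against the factor $\lambda(t_0^n)$ coming from $f(gg')=\bigl(\prod_{j=0}^{r-1}\varphi^{ej}(\mu_0)\bigr)\lambda(t_0^n k k')$ uses precisely the norm condition. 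This shows the condition is also sufficient, and since distinct $\mu_0$ give distinct values $f(t_0)$, the extensions of $\lambda$ are in bijection with $\{\mu_0\in\LL^\times\mid N_{\LL/\FF}(\mu_0)=\lambda(t_0^n)\}$, as claimed. The main obstacle is purely bookkeeping in this wrap-around computation; there is no conceptual difficulty once the norm identity is isolated.
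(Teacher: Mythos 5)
Your proof is correct, and since the paper explicitly leaves this lemma's proof to the reader, it supplies exactly the intended argument: the unique normal form $g=t_0^ik$ ($0\le i<n$, $k\in K$) forces $f(t_0^ik)=\bigl(\prod_{j=0}^{i-1}\varphi^{ej}(\mu_0)\bigr)\lambda(k)$, giving uniqueness, while the wrap-around case of the cocycle identity is equivalent to $N_{\LL/\FF}(\mu_0)=\lambda(t_0^n)$, giving both necessity and sufficiency of the norm condition. No gaps.
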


For any coset $C$ of $K$ that generates $G/K$, we fix a representative $t_0=t_0(C)$. 
Then we extend every $\lambda\in\Hom(K,\FF^\times)$ as in Lemma \ref{lm:extend_to_1cocycles} with 
$\mu_0\bydef\omega_\LL^j$ where $\lambda(t_0^n)=\omega_\FF^j$ and $0\le j<|\FF^\times|$. 
Thus, given a nondegenerate alternating bicharacter $\beta:K\times K\to\FF^\times$,
the extension $f_k$ of the character $\beta(\cdot,k)$ is determined by the condition 
$f_k(t_0(C))=\omega_\LL^{j_{C,\beta}(k)}$ where, for any $k\in K$, we define $j_{C,\beta}(k)$ by 
\begin{equation}\label{eq:jCb}
\beta(t_0(C)^n,k)=\omega_\FF^{j_{C,\beta}(k)}\text{ and }0\le j_{C,\beta}(k)<|\FF^\times|.
\end{equation} 
This determines the objects $\cC(\LL,\theta,\tau)$ as in Definition \ref{df:modelsC}. 
Explicitly, we can construct them in terms of generators and relations \eqref{eq:modelC_genrel} if we write $K$ 
as a direct product of cyclic subgroups and fix their generators:

\begin{proposition}\label{pr:Galois_ext_of_finite_field}
Let $G$ be a finite abelian group and let $\FF=GF(p^e)$. 
Then the simple $G$-Galois extensions of $\FF$ are classified as follows: 
for any coset $C$ of a subgroup $K\le G$ such that $C$ generates $G/K$ and any nondegenerate alternating bicharacter 
$\beta:K\times K\to\FF^\times$, there are exactly $|K|$ isomorphism classes.
Moreover, if we write $K=\langle a_1\rangle\times\cdots\times\langle a_m\rangle$, the following are representatives
of these isomorphism classes:
\[
\cC(s_1,\ldots,s_m)\bydef
\alg_\LL\langle X_1,\ldots,X_m\mid X_iX_j=\beta(a_i,a_j)X_jX_i\text{ and }X_i^{o(a_i)}=\mu_i(s_i) 1\rangle,
\]
where each $s_i$ is an integer in the interval $0\le s_i<o(a_i)$, 
$\LL$ is the field extension of $\FF$ of degree $n=[G:K]$ contained in the algebraic closure $GF(p^\infty)$,
the elements of the coset $C^j$ act on $\LL$ as $\varphi^{ej}$, 
\[
\mu_i(s_i)\bydef\omega_\LL^{o(a_i)j_{C,\beta}(a_i)/|\FF^\times|}\omega_\FF^{s_i}
\text{ with }j_{C,\beta}:K\to\ZZ_{\ge 0}\text{ defined by \eqref{eq:jCb}},
\]
and the (semilinear) $G$-action on $\cC(s_1,\ldots,s_m)$ is defined by $g\cdot X_i=f_{a_i}(g)X_i$ for all $g\in G$,
$i=1,\ldots,m$.
\end{proposition}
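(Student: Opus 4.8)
The plan is to translate the classification into the concrete model $\cC(\LL,\theta,\beta,\mu)$ of \eqref{eq:modelC_genrel} and reduce everything to counting in the cyclic group $\FF^\times$. By the description recalled above, a pair $(\LL,\theta)\in\mathfrak{Z}_\FF(G)$ is precisely the datum of a coset $C$ generating the cyclic quotient $G/K$ with $K\simeq A\times A$ and $\exp(K)\mid\lvert\FF^\times\rvert$; and since $\Br(\LL)=1$ makes the transgression trivial, condition (ii) in the definition of $\mathfrak{Z}_\FF(G)$ holds automatically, so every nondegenerate alternating bicharacter $\beta:K\times K\to\FF^\times$ is admissible. By Corollary \ref{co:main} together with Proposition \ref{prop:classification_mu}, the simple $G$-Galois extensions with this center and these commutation relations are exactly the $\cC(\LL,\theta,\beta,\mu)$, with $\cC(\LL,\theta,\beta,\mu)\simeq\cC(\LL,\theta,\beta,\mu')$ if and only if $\mu_i'\in\mu_i(\FF^\times)^{[o(a_i)]}$ for all $i$.

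First I would carry out the count. For each $i$ the admissible $\mu_i$ solve the coboundary equation $\dc\mu_i=f_{a_i}^{o(a_i)}$ forced by \eqref{eq:mu_compat}, and hence form a single coset of $\ker\dc=\FF^\times$; the relation of Proposition \ref{prop:classification_mu} then identifies $\mu_i$ with $\mu_i\lambda^{o(a_i)}$ for $\lambda\in\FF^\times$. Thus the $i$-th generator contributes $\lvert\FF^\times/(\FF^\times)^{[o(a_i)]}\rvert$ classes, and since $\FF^\times$ is cyclic with $o(a_i)\mid\exp(K)\mid\lvert\FF^\times\rvert$, this index equals exactly $o(a_i)$. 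Multiplying over $i=1,\dots,m$ yields $\prod_i o(a_i)=\lvert K\rvert$ isomorphism classes.

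Next I would check that the scalars $\mu_i(s_i)$ are legitimate representatives. The exponent $o(a_i)j_{C,\beta}(a_i)/\lvert\FF^\times\rvert$ is an integer because $\beta(t_0^n,a_i)^{o(a_i)}=\beta(t_0^n,a_i^{o(a_i)})=1$ forces $\lvert\FF^\times\rvert$ to divide $o(a_i)j_{C,\beta}(a_i)$ in \eqref{eq:jCb}, so $\mu_i(s_i)\in\LL^\times$ is well defined. To verify $\dc\mu_i(s_i)=f_{a_i}^{o(a_i)}$, observe that both are $1$-cocycles restricting trivially to $K$, so---as $G$ is generated by $K$ and $t_0$ with $G/K$ cyclic---it suffices to compare them at $t_0$. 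There $\theta_{t_0}=\varphi^e$ acts by $x\mapsto x^{p^e}$, so the left-hand side is $\mu_i(s_i)^{p^e-1}=\mu_i(s_i)^{\lvert\FF^\times\rvert}$; using $\omega_\FF^{\lvert\FF^\times\rvert}=1$ and the value $f_{a_i}(t_0)=\omega_\LL^{j_{C,\beta}(a_i)}$ fixed in Lemma \ref{lm:extend_to_1cocycles}, a direct computation gives $\mu_i(s_i)^{\lvert\FF^\times\rvert}=\omega_\LL^{o(a_i)j_{C,\beta}(a_i)}=f_{a_i}(t_0)^{o(a_i)}$, as required.

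Finally, since $\mu_i(s_i')/\mu_i(s_i)=\omega_\FF^{s_i'-s_i}$ and $(\FF^\times)^{[o(a_i)]}=\{\omega_\FF^{k}\mid o(a_i)\mid k\}$, Proposition \ref{prop:classification_mu} shows $\cC(s_1,\dots,s_m)\simeq\cC(s_1',\dots,s_m')$ exactly when $s_i\equiv s_i'\pmod{o(a_i)}$ for all $i$, i.e. $s_i=s_i'$ in the stated ranges. This produces $\prod_i o(a_i)=\lvert K\rvert$ pairwise non-isomorphic extensions, which by the count above is a complete list. I expect the third step to be the main obstacle: pinning down the precise exponent in $\mu_i(s_i)$ and confirming the coboundary identity requires careful bookkeeping of the normalizations $\omega_\FF=\omega_\LL^{[\LL^\times:\FF^\times]}$, the Frobenius action of $\theta_{t_0}$, and the norm condition in Lemma \ref{lm:extend_to_1cocycles}.
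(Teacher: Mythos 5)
Your proof is correct and takes essentially the same approach as the paper's: both reduce the statement to Proposition \ref{prop:classification_mu} and to the coboundary condition $\dc\mu_i=f_{a_i}^{o(a_i)}$, which is checked only at $t_0(C)$ because the relevant cocycles and coboundaries are trivial on $K$, together with the integrality of $o(a_i)j_{C,\beta}(a_i)/|\FF^\times|$ and the cyclicity of $\FF^\times$ and $\LL^\times$. The only cosmetic difference is that the paper solves the congruence $|\FF^\times|y_i\equiv o(a_i)j_{C,\beta}(a_i)\pmod{|\LL^\times|}$ to derive the formula for $\mu_i(s_i)$ as the full solution set, whereas you verify that the stated $\mu_i(s_i)$ are solutions and then match the abstract count $\prod_i[\FF^\times:(\FF^\times)^{[o(a_i)]}]=|K|$.
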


\begin{proof}
Recall that the elements $\mu_i\in\LL^\times$ in \eqref{eq:modelC_genrel} are subject to the condition 
$\dc\mu_i=f_{a_i}^{o(a_i)}$. Since $1$-coboundaries are trivial on $K$, it is sufficient to check it 
for $g=t_0(C)$, which is $\varphi^e(\mu_i)\mu_i^{-1}=\omega_\LL^{o(a_i)j_{C,\beta}(a_i)}$.
Writing $\mu_i=\omega_\LL^{y_i}$, this equation becomes $|\FF^\times|y_i\equiv o(a_i)j_{C,\beta}(a_i)\pmod{|\LL^\times|}$.
Since $\beta(t_0(C)^n,a_i)^{o(a_i)}=1$, we have $o(a_i)j_{C,\beta}(a_i)\equiv 0\pmod{|\FF^\times|}$, so the set of 
all possible values of $y_i$ is given by 
\[
y_i=\frac{o(a_i)j_{C,\beta}(a_i)}{|\FF^\times|}+[\LL^\times:\FF^\times]s_i\text{ where }s_i\in\ZZ.
\]
This gives the desired expression for $\mu_i$ because $\omega_\LL^{[\LL^\times:\FF^\times]}=\omega_\FF$.
The result now follows from Proposition \ref{prop:classification_mu} since the group $(\FF^\times)^{[o(a_i)]}$ is 
generated by $\omega_\FF^{o(a_i)}$.
\end{proof}

This result also gives a classification of central simple graded-division algebras over $\FF$ with support $G$: for each 
$\cC(s_1,\ldots,s_m)$ as in Proposition \ref{pr:Galois_ext_of_finite_field}, there is a unique (up to graded-isomorphism)
such graded-division algebra $\cD(s_1,\ldots,s_m)$ for which the centralizer of the identity component is isomorphic to
$\cC(s_1,\ldots,s_m)$ as a $G$-algebra. Since all the $G$-algebras $\cC(s_1,\ldots,s_m)$ are symmetric cocycle 
twists of each other, the same is true for the $G$-graded algebras $\cD(s_1,\ldots,s_m)$ by Lemma \ref{lm:twistD}. 
So, we can take $\cD(0,\ldots,0)$ as a ``base point'' and denote it by $\cD(G,C,\beta)$ (cf. Definition \ref{df:modelsD}). 
Specializing Corollary \ref{co:loop}, we obtain:

\begin{corollary}\label{co:loop_finite_field}
Let $G$ be an abelian group and let $\FF=GF(p^e)$. 
The set of isomorphism classes of finite-dimensional $G$-graded-central-division algebras over $\FF$ is in bijection
with the following set of quintuples $(T,H,C,\bar{\beta},[\eta])$:
\begin{itemize}
\item $H\le T$ are finite subgroups of $G$; 
\item $C$ is a coset of a subgroup $K$ of $T$ such that $H\le K$ and $C$ generates $T/K$;
\item $\bar{\beta}$ is a nondegenerate alternating bicharacter on $K/H$ with values in $\FF^\times$;
\item $\eta\in\Zc^2_{\textup{sym}}(K,\FF^\times)$.
\end{itemize}
Moreover, $L_\pi^\gamma(\cD(T/H,C/H,\bar{\beta}))$ is a representative of the isomorphism class with parameters 
$(T,H,C,\bar{\beta},[\eta])$, where $\pi:G\to G/H$ is the natural homomorphism and 
$\gamma\in\Zc^2_{\textup{sym}}(G,\FF^\times)$ is any extension of $\eta$.
\qed
\end{corollary}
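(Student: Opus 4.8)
The plan is to derive this result purely as a specialization of Corollary \ref{co:loop} to the field $\FF = GF(p^e)$, exploiting two standard features of finite fields: the triviality of the Brauer group $\Br(\FF)$ (Wedderburn's little theorem) and the cyclicity of every Galois group $\Gal(\LL/\FF)$. Corollary \ref{co:loop} gives a bijection between the isomorphism classes of finite-dimensional $G$-graded-central-division algebras over $\FF$ and the septuples $(T,H,\LL,\theta,\bar{\beta},\delta,[\eta])$. What I must show is that over a finite field the pair $(\LL,\theta)$ collapses to the single datum $C$, and the coset $\delta$ disappears, after which the bijection transports verbatim to the quintuple description.

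For the reduction of $(\LL,\theta)$ to $C$, I would argue as follows. Writing $K/H = \ker\theta$, the quotient $T/K \simeq \Gal(\LL/\FF)$ is cyclic, so the requirement that $C$ generate $T/K$ is exactly the condition that $C$ be a coset of a subgroup $K$ with $H \le K$ and cyclic quotient. Since $\LL \subset GF(p^\infty)$, the field $\LL$ is uniquely determined by its degree $[T:K]$; and the epimorphism $\theta$ is determined by its kernel $K/H$ together with the image of one generator of $T/K$. Fixing the Frobenius $\varphi^e$ as the canonical generator of $\Gal(\LL/\FF)$ and decreeing $\theta(C) = \varphi^e$ recovers $\theta$ from $C$, while conversely each generator coset yields exactly one such $\theta$, distinct cosets giving distinct homomorphisms. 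I must also confirm that every resulting pair lies in $\mathfrak{Z}_\FF(T/H)$: condition (ii) (vanishing of the transgression $\rho$) is automatic because its target $\Hc^2(T/K,\LL^\times)$ is the relative Brauer group $\Br(\LL/\FF)$, which is trivial since $\Br(\FF)=1$; and condition (i), equivalently $|\Hc^1(T/H,\LL^\times)| = |K/H|$, then reduces to $K/H \simeq A \times A$ and $\exp(K/H) \mid |\FF^\times| = p^e-1$ --- precisely what the existence of a nondegenerate alternating bicharacter $\bar{\beta}$ on $K/H$ with values in $\FF^\times$ already forces (cf.\ Corollary \ref{co:Kbeta}).

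For the disappearance of $\delta$, I would note that $\Br(\FF)=1$ makes the coset $\delta$ of Corollary \ref{co:loop} the unique trivial coset, so it carries no information and is dropped; correspondingly the central division algebra $\Delta$ appearing in Definition \ref{df:modelsD} is forced to be $\FF$ itself. Substituting $\Delta = \FF$ into $\cD(\overline{T},\LL,\theta,\bar{\beta},\Delta)$ (with $\overline{T} = T/H$) produces exactly the base-point algebra $\cD(T/H,C/H,\bar{\beta})$ singled out before the statement. The surviving parameter $[\eta] \in \Hc^2_{\textup{sym}}(K,\FF^\times)$ and the prescription that $\gamma$ be any extension of $\eta$ are inherited unchanged from Corollary \ref{co:loop}, so the stated representative $L_\pi^\gamma\bigl(\cD(T/H,C/H,\bar{\beta})\bigr)$ is the image of the corresponding septuple.

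The only real work is the bookkeeping of the first reduction, and the point to get right is that the correspondence $(\LL,\theta) \leftrightarrow C$ is a bijection and not merely a surjection. This hinges on two facts particular to finite fields: that a finite cyclic group has a canonical, intrinsically defined notion of generator matching the Frobenius, and that $\LL$ is recovered from the single integer $[T:K]$ because $GF(p^\infty)$ has a unique subfield of each finite degree over $\FF$. Granting these, no other septuple datum changes, and the bijection of Corollary \ref{co:loop} restricts to the asserted bijection with quintuples.
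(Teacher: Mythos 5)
Your proposal is correct and follows essentially the same route as the paper: the paper states this corollary as an immediate specialization of Corollary \ref{co:loop}, having already established in the preceding discussion exactly your two reductions --- the identification of $\mathfrak{Z}_\FF(T/H)$ with generator cosets $C$ (via uniqueness of subfields of $GF(p^\infty)$, the canonical Frobenius generator, and triviality of the transgression since $\Br(\FF)=1$) and the disappearance of $\delta$ (and of $\Delta$, forcing the base point $\cD(T/H,C/H,\bar{\beta})$) from the triviality of the Brauer group. The only cosmetic difference is that the paper pins down the base point explicitly through Proposition \ref{pr:Galois_ext_of_finite_field} and Lemma \ref{lm:extend_to_1cocycles} (fixed choices of $\mu_i$ and $1$-cocycle extensions), whereas you obtain it by setting $\Delta=\FF$ in Definition \ref{df:modelsD}; these amount to the same thing.
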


To address the problem of isomorphism as graded rings, we will use the following:

\begin{lemma}\label{lm:Frobenius_twist}
If $\psi\in\Aut(\FF)$ then the graded $\FF$-algebra $\cD(G,C,\beta)^{\psi^{-1}}$ is isomorphic to a symmetric cocycle twist
of $\cD(G,C,\psi\circ\beta)$. Moreover, if $\psi\circ\beta=\beta$ then $\cD(G,C,\beta)^{\psi^{-1}}$ is isomorphic to 
$\cD(G,C,\beta)$ as a graded $\FF$-algebra.
\end{lemma}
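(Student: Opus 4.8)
The plan is to reduce the whole statement to the centralizer $\cC=\Cent_\cD(\cD_e)$ of $\cD\bydef\cD(G,C,\beta)$ together with the explicit finite-field classification. Recall the paper's convention: pulling scalar multiplication back along $\psi$ gives the multiplication $a\cdot x=\psi(a)x$, so $\cD^{\psi^{-1}}$ has $a\cdot x=\psi^{-1}(a)x$. Since this operation changes neither the underlying ring, nor the grading, nor the property that all nonzero homogeneous elements are invertible, $\cD^{\psi^{-1}}$ is again a central simple $G$-graded-division algebra over $\FF$ with support $G$, and, as the centralizer is a ring-theoretic notion, its centralizer of the identity component is $\cC^{\psi^{-1}}$ (the same subring, with scalars pulled back). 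I would first read off the invariants $(\LL,\theta,\beta')$ of the simple $G$-Galois extension $\cC^{\psi^{-1}}$. The bicharacter comes from the relations $c_1c_2c_1^{-1}c_2^{-1}=\beta(k_1,k_2)1$, which are purely multiplicative; re-expressing the scalar $\beta(k_1,k_2)\in\FF^\times$ in the pulled-back structure replaces it by $\psi(\beta(k_1,k_2))$, so $\beta'=\psi\circ\beta$. The field $\LL=Z(\cD_e)$ is unchanged, being the unique subfield of $GF(p^\infty)$ of degree $[G:K]$ over $\FF$, and $\theta$ is unchanged as well: the Galois action $\bar\sigma_g$ is conjugation by $u_g$ restricted to $\LL$, and under the pullback it becomes its conjugate by $\psi|_\LL$ inside $\Gal(\LL/GF(p))$, which is abelian for finite fields, hence $\bar\sigma_g$ is fixed.

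Part~1 is then essentially formal. The extension $\cC^{\psi^{-1}}$ has the same triple $(\LL,\theta,\psi\circ\beta)$ as the centralizer $\overline{\cC}$ of $\cD(G,C,\psi\circ\beta)$ (the parameter $C$, and hence $\LL$ and $\theta$, is the same in the construction of $\cD(G,C,\beta')$ for any $\beta'$). As recorded after Definition~\ref{df:modelsC} and in the finite-field discussion, two simple $G$-Galois extensions over $\FF$ with the same $(\LL,\theta,\beta)$ are symmetric cocycle twists of each other as $G$-algebras. Transporting this through Lemma~\ref{lm:twistD} --- using the surjectivity of $\res$ in \eqref{eq:inf_res_sym} and the triviality of $\Br(\FF)$, which make the bijection $\cD\leftrightarrow\Cent_\cD(\cD_e)$ compatible with symmetric cocycle twists --- I conclude that $\cD^{\psi^{-1}}$ is a symmetric cocycle twist of $\cD(G,C,\psi\circ\beta)$, as claimed.

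For Part~2, with $\psi\circ\beta=\beta$, the same bijection reduces the claim to showing that $\cC^{\psi^{-1}}\simeq\cC(0,\ldots,0)$ as $G$-algebras, where $\cC(0,\ldots,0)$ is the centralizer of $\cD(G,C,\beta)$ from Proposition~\ref{pr:Galois_ext_of_finite_field}. Both are models for the same $(\LL,\theta,\beta)$, so by Proposition~\ref{prop:classification_mu} the isomorphism amounts to checking $\mu_i'\in\mu_i(0)(\FF^\times)^{[o(a_i)]}$ for their structure constants, where $\mu_i(0)=\omega_\LL^{o(a_i)j_{C,\beta}(a_i)/|\FF^\times|}$. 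Writing $\psi=\varphi^c$ on $\FF$ (hence on $GF(p^\infty)$), the pullback multiplies $\mu_i(0)$ by $\omega_\LL^{(p^c-1)o(a_i)j_{C,\beta}(a_i)/|\FF^\times|}$, while passing from the induced extension $\psi\circ f_{a_i}$ to the fixed extension $f_{a_i}$ (they agree on $K$, hence differ by a coboundary $\dc l_i$ since $\res$ in \eqref{eq:res} is injective) rescales $X_i$ and multiplies $\mu_i'$ by $l_i^{-o(a_i)}$. The hypothesis $\psi\circ\beta=\beta$ forces $\psi$ to fix $\beta(t_0(C)^n,a_i)=\omega_\FF^{j_{C,\beta}(a_i)}$, i.e. $(p^c-1)j_{C,\beta}(a_i)\equiv 0\pmod{|\FF^\times|}$ by \eqref{eq:jCb}; substituting this into the formulas for $l_i$ and $\mu_i'$, and using $\omega_\LL^{[\LL^\times:\FF^\times]}=\omega_\FF$, collapses the discrepancy to a power of $\omega_\FF$, giving $\mu_i'\in\mu_i(0)(\FF^\times)^{[o(a_i)]}$.

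I expect the main obstacle to be the bookkeeping in Part~2: one must keep the change in the structure constants $\mu_i$ separate from the change in the chosen $1$-cocycle extensions $f_{a_i}$, because $\psi\circ f_{a_i}\neq f_{a_i}$ in general even when $\psi\circ\beta=\beta$. Controlling the residual coboundary $\dc l_i$ and verifying that it contributes only an $o(a_i)$-th power of an element of $\FF^\times$ is exactly where the finite-field congruence $(p^c-1)j_{C,\beta}(a_i)\equiv 0\pmod{|\FF^\times|}$ enters decisively. By contrast, Part~1 requires no computation beyond identifying the invariants of $\cC^{\psi^{-1}}$ and invoking the classification.
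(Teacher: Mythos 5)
Your proposal is correct and takes essentially the same route as the paper's proof: reduce everything to the centralizer $\cC^{\psi^{-1}}$ of the identity component, note that pulling back scalars replaces $\beta$ by $\psi\circ\beta$ and the structure constants $\mu_i(0)$ by $\psi(\mu_i(0))$ (whence Part~1, via the simply transitive action of symmetric twists on the isomorphism classes with fixed $(\LL,\theta,\beta)$, Lemma~\ref{lm:twistD}, and triviality of $\Br(\FF)$), and for Part~2 rescale the generators by the explicit root-of-unity coboundary $l_i=\omega_\LL^{(p^c-1)j_{C,\beta}(a_i)/|\FF^\times|}$ so that the discrepancy in the $\mu_i$ collapses to an $o(a_i)$-th power in $\FF^\times$. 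The only cosmetic differences are that the paper first reduces to $\psi=\varphi^{e_0}$ (a generator of the stabilizer of $\beta$ in $\Aut(\FF)$) and then verifies the resulting $G$-algebra isomorphism by direct computation on generators, whereas you keep $\psi$ general and route the final check through Proposition~\ref{prop:classification_mu}; the underlying arithmetic is identical.
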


\begin{proof}
Denote $\cD=\cD(G,C,\beta)$. Since the centralizer of the identity component in $\cD$ is $\cC\bydef\cC(0,\ldots,0)$ 
as in Proposition \ref{pr:Galois_ext_of_finite_field}, the centralizer of the identity component in 
$\cD^{\psi^{-1}}$ is $\cC^{\psi^{-1}}$. So, it suffices to consider the latter. 
By construction, $\cC$ is an $\LL$-algebra with a semilinear $G$-action. 
Extending $\psi$ to an automorphism of $\LL$, we can pull the $\LL$-vector space 
structure back along $\psi^{-1}$, which makes $\cC^{\psi^{-1}}$ an $\LL$-algebra with a semilinear $G$-action
of the same kind as $\cC$ since $\psi$ commutes with the elements of $\Gal(\LL/\FF)$.
Now, $\cC^{\psi^{-1}}$ has the same generators $X_1,\ldots,X_m$, but the relations change: $\beta(a_i,a_j)$ is 
replaced by $\psi(\beta(a_i,a_j))$ and $\mu_i(0)$ is replaced by $\psi(\mu_i(0))$. This proves the first assertion.

For the second assertion, let $\FF_0$ be the subfield of $\FF$ generated by the values of $\beta$ or, in other words, by 
$\omega_{\exp(K)}$. Then $\psi\circ\beta=\beta$ if and only if $\psi$ is a power of $\varphi^{e_0}$ where $|\FF_0|=p^{e_0}$.
So, it suffices to consider the case $\psi=\varphi^{e_0}$. The $G$-action on the generators of $\cC^{\psi^{-1}}$
is given by $g\cdot X_i=\psi(f_{a_i}(g))X_i$ (with multiplication by 
scalars taken in $\cC^{\psi^{-1}}$!). Since $\psi\circ\beta=\beta$, we have $\psi(f_{a_i}(g))=f_{a_i}(g)$ for 
$g\in K$, but for $t_0=t_0(C)$, we have $\psi(f_{a_i}(t_0))=\psi(\omega_\LL^{j_i})=\omega_\LL^{p^{e_0}j_i}
=\omega_\LL^{|\FF_0^\times|j_i}f_{a_i}(t_0)$ where $j_i\bydef j_{C,\beta}(a_i)$. 
From the definition of $j_i$ and the fact that $\psi\circ\beta=\beta$ it follows that 
$p^{e_0}j_i\equiv j_i\pmod{|\FF^\times|}$, so $|\FF^\times|$ is a divisor of $|\FF_0^\times|j_i$. Taking the elements
\[
X'_i\bydef\omega_\LL^{-|\FF_0^\times|j_i/|\FF^\times|}X_i,\, i=1,\ldots,m,
\]
as the new generators of $\cC^{\psi^{-1}}$, we get $g\cdot X'_i=f_{a_i}(g)X'_i$ for all $g\in G$. Also, 
\[
(X'_i)^{o(a_i)}=\omega_\LL^{-o(a_i)|\FF_0^\times|j_i/|\FF^\times|}\psi(\mu_i(0))
=\omega_\LL^{-o(a_i)|\FF_0^\times|j_i/|\FF^\times|}\omega_\LL^{p^{e_0}o(a_i)j_i/|\FF^\times|}=\mu_i(0).
\]
It follows that $\cC^{\psi^{-1}}\simeq\cC$ as a $G$-algebra over $\FF$.
\end{proof}

Lemma \ref{lm:Frobenius_twist} allows us to classify the graded-central-division $\FF$-algebras in 
Corollary \ref{co:loop_finite_field} up to isomorphism of graded rings. 
Indeed, for any $G/H$-graded algebra $\overline{\cA}$, the loop algebra $L_\pi(\overline{\cA})$ 
can be seen as a graded subalgebra of the group ring $\overline{\cA}G$ 
(with the $G$-grading defined by declaring $\overline{\cA}g$ to
be the homogeneous component of degree $g$),
so $L_\pi(\overline{\cA})^\psi\simeq L_\pi(\overline{\cA}^\psi)$ for any $\psi\in\Aut(\FF)$. Also, for any 
$G$-graded algebra $\cA$ and any $\gamma\in\Zc^2(G,\FF^\times)$, we have 
$(\cA^\gamma)^{\psi^{-1}}=(\cA^{\psi^{-1}})^{\psi\circ\gamma}$. 
Therefore, using the notation of Corollary \ref{co:loop_finite_field}, we obtain, for any $\psi\in\Aut(\FF)$,  
$L_\pi^\gamma(\cD(T/H,C/H,\bar{\beta}))^{\psi^{-1}}\simeq L_\pi^{\gamma'}(\cD(T/H,C/H,\psi\circ\bar{\beta}))$  
for some $\gamma'\in\Zc^2_{\textup{sym}}(G,\FF^\times)$ that has the same restriction to $H$ (but not necessarily to $K$)
as $\psi\circ\gamma$. Moreover, if $\psi\circ\bar{\beta}=\bar{\beta}$ then
$L_\pi^\gamma(\cD(T/H,C/H,\bar{\beta}))^{\psi^{-1}}\simeq L_\pi^{\psi\circ\gamma}(\cD(T/H,C/H,\bar{\beta}))$.

It is well known that $\Ext(G,\ZZ/N\ZZ)$ is naturally isomorphic to the dual group of $G_{[N]}$, which gives us 
the group $\Hc^2_{\textup{sym}}(G,\FF^\times)$ since $\FF^\times$ is cyclic.
In fact, symmetric $2$-cocycles representing the elements of $\Hc^2_{\textup{sym}}(G,\FF^\times)$ 
can be constructed explicitly. For any $N\in\NN$ with $p\nmid N$, it is convenient to define the map
\[
[1/N]:GF(p^\infty)^\times\to GF(p^\infty)^\times,\;\omega_M^j\mapsto\omega_{MN}^j\text{ where }0\le j<M.
\]
As the notation suggests, this is a set-theoretic section of the epimorphism 
$[N]:GF(p^\infty)^\times\to GF(p^\infty)^\times$, so we will denote the image of $x$ under $[1/N]$ by $x^{1/N}$.

\begin{lemma}\label{lm:H2sym_finite_field}
Let $G$ be an abelian group, $\FF=GF(p^e)$ and $N=p^e-1$. Then there is a natural isomorphism 
$\Hom(G_{[N]},\FF^\times)\simeq\Hc^2_{\textup{sym}}(G,\FF^\times)$ constructed as follows: given a character 
$\chi:G_{[N]}\to\FF^\times$, if $\tilde{\chi}\in\Hom(G,GF(p^\infty)^\times)$ is an extension of $\chi$ then 
\[
\gamma_{\tilde{\chi}}(g_1,g_2)\bydef
\tilde{\chi}(g_1)^{1/N}\tilde{\chi}(g_2)^{1/N}\bigl(\tilde{\chi}(g_1g_2)^{1/N}\bigr)^{-1}\quad\forall g_1,g_2\in G
\]
is a symmetric $2$-cocycle $G\times G\to\FF^\times$ whose class 
$[\gamma_{\tilde{\chi}}]\in\Hc^2_{\textup{sym}}(G,\FF^\times)$ depends only on $\chi$, and the mapping
$\chi\mapsto[\gamma_{\tilde{\chi}}]$ is the desired isomorphism.
\end{lemma}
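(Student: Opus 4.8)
The plan is to realize the stated map as the connecting homomorphism attached to a suitable short exact sequence. Since $N=p^e-1$ is coprime to $p$ and $GF(p^\infty)$ is algebraically closed, the power map $[N]\colon GF(p^\infty)^\times\to GF(p^\infty)^\times$ is surjective with kernel exactly the group of $N$-th roots of unity, which is $\FF^\times$. This yields a short exact sequence of (trivial) $G$-modules
\[
1\longrightarrow\FF^\times\longrightarrow GF(p^\infty)^\times\stackrel{[N]}{\longrightarrow}GF(p^\infty)^\times\longrightarrow 1.
\]
Moreover $GF(p^\infty)^\times$ is divisible, hence injective as an abelian group, so $\Ext(G,GF(p^\infty)^\times)=0$. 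Using the identification $\Hc^2_{\textup{sym}}(G,-)\simeq\Ext(G,-)$ for abelian groups (already used above), the long exact sequence for $\Ext(G,-)$ collapses to a surjection $\delta\colon\Hom(G,GF(p^\infty)^\times)\to\Hc^2_{\textup{sym}}(G,\FF^\times)$ whose kernel is the image of $[N]_*$, i.e.\ the set of $\psi\in\Hom(G,GF(p^\infty)^\times)$ of the form $\psi=\phi^N$.

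First I would check that $\delta$ is computed by the explicit formula. Given $\tilde\chi\in\Hom(G,GF(p^\infty)^\times)$, the set-theoretic section $[1/N]$ of $[N]$ produces the lift $g\mapsto\tilde\chi(g)^{1/N}$, and the standard description of the connecting homomorphism \cite[III, Lemma 1.4 and Theorem 9.1]{MacLane} gives $\delta(\tilde\chi)=[\gamma_{\tilde\chi}]$ with $\gamma_{\tilde\chi}$ exactly as in the statement. That $\gamma_{\tilde\chi}$ takes values in $\FF^\times$ is seen by applying $[N]$: the result is $\tilde\chi(g_1)\tilde\chi(g_2)\tilde\chi(g_1g_2)^{-1}=1$, so $\gamma_{\tilde\chi}(g_1,g_2)\in\ker[N]=\FF^\times$; symmetry is immediate, and the cocycle identity is the usual one for the coboundary of a set-theoretic lift. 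The existence of an extension $\tilde\chi$ of a given $\chi\colon G_{[N]}\to\FF^\times$ invokes injectivity of $GF(p^\infty)^\times$ once more: the inclusion $G_{[N]}\hookrightarrow G$ induces a surjection on $\Hom(-,GF(p^\infty)^\times)$, and $\chi$ is regarded as a homomorphism into $GF(p^\infty)^\times$.

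The heart of the argument is to see that $\chi\mapsto[\gamma_{\tilde\chi}]$ is a well-defined isomorphism, which I would establish by identifying $\ker\delta$ with the kernel of the restriction map $r\colon\Hom(G,GF(p^\infty)^\times)\to\Hom(G_{[N]},GF(p^\infty)^\times)$. One inclusion is clear: if $\psi=\phi^N$ then $\psi(g)=\phi(g)^N=\phi(g^N)=1$ for $g\in G_{[N]}$. For the converse, a homomorphism $\psi$ trivial on $G_{[N]}$ factors through $G/G_{[N]}\simeq G^{[N]}$ (via $g\,G_{[N]}\mapsto g^N$), giving $\bar\psi\colon G^{[N]}\to GF(p^\infty)^\times$ with $\psi(g)=\bar\psi(g^N)$; extending $\bar\psi$ to $\phi\colon G\to GF(p^\infty)^\times$ by injectivity yields $\phi^N=\psi$. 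Hence $\ker\delta=\ker r$. Since $G_{[N]}$ is $N$-torsion, every homomorphism out of it lands in $\ker[N]=\FF^\times$, so $\Hom(G_{[N]},GF(p^\infty)^\times)=\Hom(G_{[N]},\FF^\times)$ and $r$ is the surjective restriction onto $\Hom(G_{[N]},\FF^\times)$. Thus $\delta$ descends along $r$ to an isomorphism $\Hom(G_{[N]},\FF^\times)\stackrel{\sim}{\longrightarrow}\Hc^2_{\textup{sym}}(G,\FF^\times)$, $\chi\mapsto[\gamma_{\tilde\chi}]$; in particular $[\gamma_{\tilde\chi}]$ depends only on $\chi$. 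Independence of the chosen section $[1/N]$ follows because two sections differ by a map into $\FF^\times$, whose effect on $\gamma_{\tilde\chi}$ is a coboundary.

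Naturality in $G$ is then automatic, since every ingredient --- the connecting homomorphism $\delta$, the restriction $r$, and the identification of $\ker\delta$ with $N$-th powers --- is natural with respect to group homomorphisms, which carry $N$-torsion to $N$-torsion. The main obstacle I anticipate is bookkeeping rather than conceptual: matching the explicit section $[1/N]$ and the formula for $\gamma_{\tilde\chi}$ with the abstract connecting homomorphism, and keeping track of the two places where injectivity (divisibility) of $GF(p^\infty)^\times$ is used --- once to extend $\chi$ to $\tilde\chi$, and once to extract the $N$-th root homomorphism $\phi$.
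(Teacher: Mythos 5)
Your proof is correct and follows essentially the same route as the paper: both use the sequence $1\to\FF^\times\to GF(p^\infty)^\times\stackrel{[N]}{\longrightarrow}GF(p^\infty)^\times\to 1$ together with divisibility (injectivity) of $GF(p^\infty)^\times$, the identification $\Hc^2_{\textup{sym}}(G,\FF^\times)\simeq\Ext(G,\FF^\times)$, and the explicit section $[1/N]$ to produce $[\gamma_{\tilde{\chi}}]$. The only difference is presentational: where the paper identifies $\Hom(G,\overline{\FF}^\times)/\Hom(G,\overline{\FF}^\times)^{[N]}$ with $\Hom(G_{[N]},\FF^\times)$ by applying the exact functor $\Hom(\,\cdot\,,\overline{\FF}^\times)$ to $1\to G_{[N]}\to G\stackrel{[N]}{\longrightarrow}G$, you verify the same identification element-wise (showing $\ker\delta=\ker r$ and surjectivity of $r$), which amounts to the same computation.
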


\begin{proof}
Let $\overline{\FF}=GF(p^\infty)$. Using the exact sequence 
$1\to\FF^\times\to\overline{\FF}^\times\stackrel{[N]}{\longrightarrow}\overline{\FF}^\times\to 1$
as an injective resolution of $\FF^\times$, we obtain the isomorphism
\[
\Hom(G,\overline{\FF}^\times)/\Hom(G,\overline{\FF}^\times)^{[N]}\to\Hc^2_{\textup{sym}}(G,\FF^\times),
\]
explicitly given by a formula similar to \eqref{eq:Z2sym_from_inj_res}, with $[1/N]$ playing the role of $s_\FF$.
Now, applying the exact functor $\Hom(\,\cdot\,,\overline{\FF}^\times)$ to the exact sequence 
$1\to G_{[N]}\to G\stackrel{[N]}{\longrightarrow} G$, we obtain the isomorphism 
\[
\Hom(G,\overline{\FF}^\times)/\Hom(G,\overline{\FF}^\times)^{[N]}\to\Hom(G_{[N]},\overline{\FF}^\times)
=\Hom(G_{[N]},\FF^\times)
\]
induced by the restriction of characters from $G$ to $G_{[N]}$. The result follows.
\end{proof}

Using Lemma \ref{lm:H2sym_finite_field} and Remark \ref{re:twisted_loop_as_form}, representatives of the isomorphism classes
of graded-central-division algebras over $\FF$ in Corollary \ref{co:loop_finite_field} can be constructed explicitly.
For the purpose of classification up to isomorphism of graded rings, we also fix orbit representatives for bicharacters:

\begin{df}\label{df:modelsD_finite_field}
Let $\FF=GF(p^e)$. 
For any finite subgroups $H\le K\le T$ of $G$ such that $T/K$ is cyclic and $\overline{K}\bydef K/H$ admits 
nondegenerate alternating bicharacters with values in $\FF^\times$ 
(i.e., $\overline{K}\simeq A\times A$ for some abelian group $A$ and $\exp(\overline{K})$ divides $|\FF^\times|$), 
fix a representative $\bar{\beta}_0(\cO)$ for any $\Aut(\FF)$-orbit of these bicharacters and also
fix elements $\bar{a}_1,\ldots,\bar{a}_m\in\overline{K}$ such that 
$\overline{K}=\langle \bar{a}_1\rangle\times\cdots\times\langle \bar{a}_m\rangle$. 
Finally, for any coset $C$ of $K$ in $T$ that generates $T/K$, fix an element $\bar{t}_0(C)$ in the coset 
$\overline{C}\bydef C/H$ of $\overline{K}$ in $\overline{T}\bydef T/H$. 
Let $n=[T:K]$ and define a $\overline{T}$-action on $\LL\bydef GF(p^{en})$ by letting the elements of 
$\overline{C}^j$ act as $\varphi^{ej}$.
Then, for any nondegenerate alternating bicharacter $\bar{\beta}:\overline{K}\times\overline{K}\to\FF^\times$, 
let $\lambda_i=\bar{\beta}(\bar{t}_0(C)^n,\bar{a}_i)$ and define a semilinear $\overline{T}$-action on  
\[
\overline{\cC}\bydef\alg_\LL\langle X_1,\ldots,X_m\mid X_iX_j=\bar{\beta}(\bar{a}_i,\bar{a}_j)X_jX_i
\text{ and }X_i^{o(\bar{a}_i)}=\lambda_i^{\frac{1}{|\LL^\times|/o(\bar{a}_i)}} 1\rangle
\]
by setting $\bar{t}\cdot X_i=f_{\bar{a}_i}(\overline{t})X_i$ for all $\bar{t}\in\overline{T}$, 
where $f_{\bar{a}_i}:\overline{T}\to\LL^\times$ is the unique $1$-cocycle that restricts to $\bar{\beta}(\cdot,\bar{a}_i)$
on $\overline{K}$ and satisfies $f_{\bar{a}_i}(\bar{t}_0(C))=\lambda_i^{\frac{1}{[\LL^\times|/|\FF^\times|}}$.
Pick a primitive idempotent $E\in\overline{\cC}^\op$ and define a $\overline{T}$-graded $\FF$-algebra 
\[
\overline{\cD}\bydef E\bigl(\overline{\cC}^\op\# \FF\overline{T}\bigr)E=
\bigoplus_{\bar{t}\in\overline{T}}\overline{D}_{\bar{t}}
\text{ where }\overline{D}_{\bar{t}}E\bigl(\overline{\cC}^\op\# \bar{t}\bigr)E.
\]
Finally, for any character $\chi:K_{[|\FF^\times|]}\to\FF^\times$, pick an extension 
$\tilde{\chi}:T\to GF(p^\infty)^\times$ and define the following $G$-graded $\FF$-algebra with support $T$:
\[
\cD\bydef\bigoplus_{t\in T}\overline{\cD}_{\pi(t)}\otimes\tilde{\chi}(t)^{\frac{1}{|\FF^\times|}}t
\subset\overline{\cD}\otimes_\FF GF(p^\infty)T,
\]
where $\pi:T\to\overline{T}$ is the natural homomorphism. 
This is a graded-central-division algebra over $\FF$, whose graded-isomorphism class does not depend on the choice of $E$
or $\tilde{\chi}$, but depends on our (fixed) choice of the generators $\bar{a}_i$ and coset representative $\bar{t}_0(C)$.
By abuse of notation, we will denote this object by $\cD(\FF,T,H,C,\cO,\chi)$ if $\bar{\beta}=\bar{\beta}_0(\cO)$.
\end{df}

We can now summarize our classification:

\begin{theorem}\label{th:finite_GDR}
Let $G$ be an abelian group. Every finite $G$-graded-division ring is graded-isomorphic to some $\cD(\FF,T,H,C,\cO,\chi)$
as in Definition \ref{df:modelsD_finite_field}. Moreover, $\cD(\FF,T,H,C,\cO,\chi)$ and $\cD(\FF',T',H',C',\cO',\chi')$ are 
graded-isomorphic if and only if $\FF'=\FF$, $T'=T$, $H'=H$, $C'=C$, $\cO'=\cO$, and $\chi'=\psi\circ\chi$ for some 
$\psi\in\Gal(\FF/\FF_0)$, where $\FF_0$ is the subfield of $\FF$ generated by the values of the bicharacters in $\cO$. \qed
\end{theorem}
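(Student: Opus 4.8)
The plan is to reduce the whole statement to the classification of finite-dimensional $G$-graded-central-division algebras over the finite field $\FF\bydef Z(\cD)_e$ obtained in Corollary \ref{co:loop_finite_field}, and then to account for the gap between graded-ring isomorphism and graded-$\FF$-algebra isomorphism by means of the $\Aut(\FF)$-action described just before Definition \ref{df:modelsD_finite_field}. Throughout I will use the identity ``$\cD\cong\cD'$ as graded rings if and only if $\cD^\psi\cong\cD'$ as graded $\FF$-algebras for some $\psi\in\Aut(\FF)$''.

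For the existence assertion, I would begin with a finite $G$-graded-division ring $\cD$, set $\FF\bydef Z(\cD)_e$, and regard $\cD$ as a finite-dimensional $G$-graded-central-division $\FF$-algebra. Corollary \ref{co:loop_finite_field} then yields a quintuple $(T,H,C,\bar\beta,[\eta])$ together with a graded-$\FF$-algebra isomorphism $\cD\simeq L_\pi^\gamma\bigl(\cD(T/H,C/H,\bar\beta)\bigr)$, where $\gamma$ is any symmetric extension of $\eta$. Letting $\cO$ be the $\Aut(\FF)$-orbit of $\bar\beta$, the loop-algebra form of Lemma \ref{lm:Frobenius_twist} (spelled out in the discussion after its proof) replaces $\bar\beta$ by the chosen representative $\bar\beta_0(\cO)$ at the cost of a Frobenius twist, i.e.\ a graded-ring isomorphism. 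Translating $[\eta]$ into a character $\chi\colon K_{[|\FF^\times|]}\to\FF^\times$ through the natural isomorphism of Lemma \ref{lm:H2sym_finite_field}, and recalling (Remark \ref{re:twisted_loop_as_form}) that the algebra of Definition \ref{df:modelsD_finite_field} is precisely $L_\pi^{\gamma_{\tilde\chi}}(\overline{\cD})$ with $\overline{\cD}$ built from $\bar\beta_0(\cO)$, I would conclude $\cD\simeq\cD(\FF,T,H,C,\cO,\chi)$ as graded rings.

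For the isomorphism criterion, I would first dispose of the intrinsic parameters. The field $\FF=Z(\cD)_e$, the support $T$, the support $H$ of $Z(\cD)$, and the coset $C=\bar\sigma^{-1}(\varphi^e)$ (through Proposition \ref{pr:DCK} and the canonical Frobenius generator of $\Gal(\LL/\FF)=\langle\varphi^e\rangle$) are all defined purely in terms of the ring structure, hence are unaffected by passing to $\cD^\psi$; thus $\FF'=\FF$, $T'=T$, $H'=H$, $C'=C$ are forced. It remains to track the two genuinely twistable parameters. Combining Lemma \ref{lm:Frobenius_twist}, the loop discussion following it, and the naturality of Lemma \ref{lm:H2sym_finite_field} in the coefficient group, I expect that $\cD(\FF,T,H,C,\cO,\chi)^\psi$ has defining bicharacter $\psi\circ\bar\beta_0(\cO)$ and character $\psi\circ\chi$ (the sign is immaterial since $\Aut(\FF)$ is a group). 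Feeding this into Corollary \ref{co:loop_finite_field}, the required equality of bicharacters $\psi\circ\bar\beta_0(\cO)=\bar\beta_0(\cO')$ forces both $\cO'=\cO$ and that $\psi$ fixes $\bar\beta_0(\cO)$; since the image of a nondegenerate alternating bicharacter on $\overline{K}$ is exactly $\mu_{\exp(\overline{K})}$, fixing $\bar\beta_0(\cO)$ is equivalent to $\psi\in\Gal(\FF/\FF_0)$, where $\FF_0=\FF(\mu_{\exp(\overline{K})})$ is the ($\Aut(\FF)$-stable) subfield generated by the values of the bicharacters in $\cO$. Equality of the third parameters then reads $\chi'=\psi\circ\chi$. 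For sufficiency one runs this computation backwards: given the listed conditions, the forward description of $\cD(\dots,\chi)^\psi$ produces a graded-$\FF$-algebra isomorphism onto $\cD(\dots,\chi')$, hence a graded-ring isomorphism.

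The main obstacle I anticipate is the precise tracking of the cocycle/character parameter under the Frobenius twist: one must check that $\eta=\gamma|_K$ transforms to $\psi\circ\eta$ and that, under the natural isomorphism $\Hom(K_{[|\FF^\times|]},\FF^\times)\simeq\Hc^2_{\textup{sym}}(K,\FF^\times)$ of Lemma \ref{lm:H2sym_finite_field}, this corresponds to $\chi\mapsto\psi\circ\chi$. This rests on the functoriality of that isomorphism in the coefficient group $\FF^\times$ and on the compatibility of the section $[1/N]$ with $\psi$ (both being built from the Frobenius on $GF(p^\infty)$, they commute). Everything else is a matter of invoking the already-established $\FF$-algebra classification of Corollary \ref{co:loop_finite_field} and the intrinsic descriptions of $T$, $H$, and $C$.
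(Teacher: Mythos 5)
Your proposal is correct and follows essentially the same route as the paper, whose own (implicit) proof of Theorem \ref{th:finite_GDR} is precisely the assembly you describe: Corollary \ref{co:loop_finite_field}, Lemma \ref{lm:Frobenius_twist} together with the loop-algebra discussion following it, Lemma \ref{lm:H2sym_finite_field}, and Remark \ref{re:twisted_loop_as_form} applied to Definition \ref{df:modelsD_finite_field}. The one point to phrase more carefully is that the character parameter of $\cD(\FF,T,H,C,\cO,\chi)^{\psi^{-1}}$ equals $\psi\circ\chi$ only \emph{after} one knows $\psi\circ\bar{\beta}_0(\cO)=\bar{\beta}_0(\cO)$ (for general $\psi$ the correcting symmetric cocycle has uncontrolled restriction to $K$, exactly as the paper warns, and likewise $[1/N]$ and $\psi$ commute only up to $\FF^\times$-valued factors, i.e.\ up to a coboundary); your ordering of steps --- first forcing $\cO'=\cO$ and $\psi\in\Gal(\FF/\FF_0)$, then reading off $\chi'=\psi\circ\chi$ --- respects this, so there is no gap.
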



\end{document}